\documentclass{article}
\usepackage{graphicx,amsmath,amsbsy,amssymb,textcomp,amsthm,amsfonts,xcolor} 
\usepackage{todonotes}
\usepackage{geometry} 
\usepackage{comment}
\usepackage{pgfplots}
\usepackage{stmaryrd}
\usepackage{hyperref}
\hypersetup{
    colorlinks=true,
    linkcolor=blue,
    filecolor=magenta,      
    urlcolor=cyan
    }
\usepackage[justification=centering]{caption}
\SetSymbolFont{stmry}{bold}{U}{stmry}{m}{n}
\usepackage{dsfont}
\geometry{a4paper} 
\geometry{margin=1.0in}

\newcommand{\hz}{\hat{\zeta}}
\newcommand{\mui}{\mu^{(i)}}
\newcommand{\phii}{\phi^{(i)}}
\newcommand{\be}{\mathbf{e}}
\newcommand{\N}{\mathbb{N}}
\newcommand{\R}{\mathbb{R}}
\newcommand{\Z}{\mathbb{Z}}
\newcommand{\E}{\mathbb{E}}

\newcommand{\bb}{\textbf{b}}
\newcommand{\bw}{\textbf{w}}
\newcommand{\bx}{\textbf{x}}
\newcommand{\cM}{\mathcal{M}}

\newcommand{\cU}{\mathcal{U}}
\newcommand{\cT}{\mathcal{T}}

\newcommand{\cW}{\mathcal{W}}
\newcommand{\bT}{\mathbb{T}}

\newcommand{\bze}{\boldsymbol{\zeta}}

\newcommand{\tbze}{\boldsymbol{\tilde{\zeta}}}

\newcommand{\bmu}{\boldsymbol{\mu}}
\newcommand{\btheta}{{\boldsymbol{\theta}}}
\newcommand{\bN}{\mathbf{N}}
\newcommand{\bzero}{\mathbf{0}}
\newcommand{\bone}{\mathbf{1}}
\newcommand{\bg}{\mathbf{g}}
\newcommand{\bn}{\mathbf{n}}
\newcommand{\bk}{\mathbf{k}}
\newcommand{\Prob}{\mathbb{P}}
\newtheorem{Theorem}{Theorem}[section]
\newtheorem{Proposition}[Theorem]{Proposition}
\newtheorem{Lemma}[Theorem]{Lemma}
\newtheorem{Definition}[Theorem]{Definition}
\newtheorem{Remark}[Theorem]{Remark}
\newtheorem{Corollary}[Theorem]{Corollary}

\usetikzlibrary{patterns}

\pgfplotsset{compat=1.18}

\title{Existence of critical tiltings and local limits of general size-conditioned Bienaymé-Galton-Watson multitype trees}
\author{Rémy Poudevigne, Paul Thévenin}
\date{\today}

\begin{document}
\maketitle
\abstract{
We are interested in the structure of multitype Bienaymé-Galton-Watson (BGW) trees conditioned on integer linear combinations of the numbers of vertices of given types. We show that, under regularity assumptions on the offspring distributions, it is always possible to find a critical BGW tree having the same conditional distribution. This allows us to prove the existence of local limits for noncritical BGW trees, under a large variety of conditionings. Our proof is based on geometric considerations on the set of the so-called exponential tiltings of a family of offspring distributions.
}

\tableofcontents

\section{Introduction}

Multitype Bienaymé-Galton-Watson trees (or, in short, BGW trees) are a model of random trees, where each vertex is given a type and has offspring according to a distribution that only depends on its type. The asymptotic structure of large BGW trees under various conditionings (total size, number of leaves, number of vertices of a given type) has attracted a lot of interest (see e.g. \cite{Kor12, Riz15, Add19, The20, KM23, ABD23}), in order to tackle questions coming from theoretical mathematics, biology or physics. 

We consider here trees whose set of types is $[K] := \{1, \ldots,K\}$.
In the monotype case ($K=1$), which has been extensively studied, let $\mu$ be a distribution on $\Z_+$, assumed to be critical (that is, of mean $1$). Let $\cT_n$ be the BGW tree with offspring distribution $\mu$, conditioned on having $n$ vertices, provided that this occurs with positive probability. Abraham and Delmas \cite{AD14} proved that the tree $\cT_n$ converges in a local sense to a random discrete infinite tree $\cT_*$; that is, for all $k \geq 1$, the $k$-neighbourhood of the root of $\cT_n$ converges in distribution to the $k$-neighbourhood of the root of $\cT_*$. The tree $\cT_*$ is called Kesten's tree, and was first characterized in \cite{Kes86}.
In another direction, under the additional assumption that $\mu$ has finite variance $\sigma^2$, Aldous \cite{Ald91a,Ald91b,Ald93} proved that $\cT_n$, seen as a metric space where all edges have length $(\sigma\sqrt{n})^{-1}$, converges in distribution to a random compact limit metric space called the Brownian Continuum Random Tree (in short, CRT) $\cT_\infty$. This tree is referred to as the scaling limit (i.e. limit after rescaling) of $\cT_n$.

In the case where $K \geq 2$, similar partial results were obtained. Miermont \cite{Mie08} proved that the BGW tree $\cT^{(i)}_n$, which is the multitype tree with root of type $i$ conditioned on having $n$ vertices of type $i$, admits the CRT as its scaling limit, given that the offspring distribution is critical (in a multitype sense that we define later) and has small exponential moments. The latter assumption was then lifted by Haas and Stephenson \cite{HS21}, who only assume a finite variance condition. 

On the other hand, let $(\gamma_1, \ldots, \gamma_K) \in \Z_+^K \backslash \{ 0, \ldots, 0\}$. Stephenson \cite{Ste18} showed, under an assumption of small exponential moments, the local convergence of a critical multitype BGW tree $\cT$ conditioned on $\sum_{i=1}^K \gamma_i N_i(\cT)=n$ (for $n$ going to $+\infty$ along a subsequence on which these events have positive probability) towards a multitype version of Kesten's tree (see \cite[Definition 2.7]{Ste18}). Here and everywhere in the paper, $N_i(\cT)$ stands for the number of vertices of type $i$ in the tree $\cT$.

In all the previously mentioned results, the criticality assumption turns out to be fundamental, and proving results outside of criticality can be challenging. In the monotype case, Janson \cite{Jan12} proved the following: a (non necessarily critical) distribution $\mu$ on $\Z_+$ being given, under some regularity assumptions, there exists a unique critical distribution $\tilde{\mu}$ such that, for all $n$, the $\mu$-BGW tree $\cT_n$ has the same distribution as the $\tilde{\mu}$-BGW tree $\tilde{\cT}_n$ (again, conditioned to have the same number $n$ of vertices). This allows to prove the local and scaling convergences of the noncritical tree $\cT_n$ as well.

Little is known in the multitype case, outside of criticality. Let us mention Pénisson \cite{Pen16}, who showed that - under some regularity assumptions - for all $n_1, \ldots, n_K$, the tree $\cT_{n_1, \ldots, n_K}$ (the BGW tree conditioned to have $N_i(\cT)=n_i$ vertices of type $i$ for all $i \in [K]$) is distributed as a critical BGW tree under the same conditioning. Recently, Abraham, Delmas and Guo \cite{ADG18} showed the local convergence of the tree $\cT_{n_1, \ldots, n_K}$, provided that the asymptotic proportions of vertices of each type satisfy a specific relation. In another direction, Abraham, Bi and Delmas \cite{ABD23} considered the case where the multitype tree has the structure of a monotype BGW tree, and where the type of a vertex only depends on its number of children. In this specific case, they characterized the local limit of the tree $\cT_{n_1, \ldots, n_K}$ as $n \rightarrow \infty$, where $n_i \sim \alpha_i n$ for all $i$, for some $(\alpha_1, \ldots, \alpha_K) \in [0,1]^K$. Outside of criticality, they managed to obtain a detailed picture similar to Janson's in the monotype case, and to characterize the cases in which the limit is a nondegenerate multitype Kesten tree.

Finally, in the general multitype case and under a general size-conditioning, the second author obtains partial results \cite{The23}: the size-conditioned tree $\cT$ is distributed as a critical tree $\tilde{\cT}$ under the same size-conditioning, under some rather strong assumptions on the generating functions of the offspring distributions.

Our aim in this paper is to extend all these results, and generalize \cite{The23}.

\paragraph{Notation}

In all the paper, we denote by $\Z$ the set of integers, by $\Z_+:= \{0,1, \ldots \}$ the set of nonnegative integers, and by $\N := \Z_+ \backslash \{0 \}$ the set of positive integers. For all $K \in \N$, we denote by $[K]$ the set $\{1,\ldots,K\}$. 

The space $\R^d$ is endowed with the usual Euclidean metric associated with the norm $||\cdot||_2$. For $r>0$, $\mathbf{x} \in \R^K$, we let $B_{r}(\mathbf{x})$ be the open ball of radius $r$ in $\R^K$ centered at $\mathbf{x}$. For any $\btheta := (\theta_1, \ldots, \theta_d) \in \R^d$, we set $e^{\btheta}:=(e^{\theta_1},\ldots,e^{\theta_d})$.

For any $a,b \geq 1$ and $A=\Z$ or $\Z_+$, $\cM^*_{a,b}(A)$ denotes the set of $a \times b$ matrices with coefficients in $A$, without the null matrix. We also denote by $\bzero \in \R^K$ (and $\bone \in \R^K$ respectively) the $K$-dimensional vectors with all coordinates equal to $0$ (and $1$ respectively), which will be line- or column-vectors depending on the context. The transpose of a matrix $M$ is denoted by $M^{\intercal}$, its image by $\text{Im}(M)$ and its trace by $Tr(M)$.
For any $d \geq 1$, $\mathbf{a}:=(a_1, \ldots, a_d)^\intercal$ and $\mathbf{b} := (b_1, \ldots, b_d)^\intercal \in \R^d$, we denote by $\mathbf{a} \cdot \mathbf{b} := \sum_{i=1}^d a_i b_i$ the usual scalar product on $\R^d$.

For $\mu$ a distribution, we denote by $\Prob_{\mu}$ the probability measure under $\mu$ and $\E_\mu$ the expectation with respect to $\mu$.

\paragraph{Outline of the paper}

We start by recalling definitions and results on trees in Section \ref{sec:trees}. Section \ref{sec:results} is devoted to the statement of our main results concerning the existence of the so-called critical tiltings equivalent to a given measure. We study in Section \ref{sec:properties of Mcrit} the properties of the set of critical tiltings, while Section \ref{sec:propertiesofthefunctionc} concerns the properties of a function $\chi$, which plays an important role in the proof of our main results. This proof is done in Section \ref{sec:proof}. Section \ref{sec:locscallim} shows how our results allow us to prove the existence of a local limit for noncritical trees under a specific conditioning, while Section \ref{sec:appendix} provides a counterexample of our results in the case of non entire generating functions. Finally, Section \ref{sec:appenonze} shows that the function $\chi$ is not bijective, showing in addition that there are elements with arbitrarily many pre-images.

\section{Background on random trees}
\label{sec:trees}

We recall here the definitions of multitype plane trees and multitype BGW trees. 

\subsection{Plane trees}

We first recall the definition of plane trees using Neveu's formalism \cite{Nev86}. Let $\cU := \bigcup_{k \geq 0} \N^k$ represent the set of finite sequences of positive integers, where we adopt the convention that $\N^0=\{\varnothing\}$. To simplify notation, for $k \in \N$, an element $u$ of $\N^k$ is denoted as $u=u_1 \cdots u_k$, with $u_1, \ldots, u_k \in \N$. For $k \in \Z_+$, $u=u_1\cdots u_k \in \N^k$, and $i \in \N$, we use $ui$ to represent the element $u_1 \cdots u_ki \in \N^{k+1}$ and $iu$ to represent the element $iu_1 \cdots u_k \in \N^{k+1}$, with the convention that $\emptyset i = i \emptyset=i$ for $i \in \N$.

A plane tree $t$ is defined as a subset of $\cU$ satisfying three conditions:
\begin{itemize}
\item(i) $\varnothing \in t$ (indicating the presence of a root in the tree);
\item(ii) if $u=u_1\cdots u_n \in t$, then, for all $k \leq n$, $u_1\cdots u_k \in t$ (these elements are referred to as ancestors of $u$);
\item(iii) for any $u \in t$, there exists a nonnegative integer $k_u(t)$ such that, for every $i \in \N$, $ui \in t$ if and only if $1 \leq i \leq k_u(t)$ ($k_u(t)$ is called the number of children of $u$, or the outdegree of $u$, $u$ is called the parent of $ui$ and $ui$ is called a child of $u$).
\end{itemize}

The elements of $t$ are referred to as vertices, and we denote the total number of vertices of $t$ by $|t|$. Finally, we denote the set of plane trees by $\bT$. We always consider a plane tree $t$ as a metric space by connecting each non-root vertex to its parent, with the usual graph distance $d_t$ on it (that is, all edges have length $1$. For $a>0$, we also define $a d_t$ as the distance on $t$ for which all edges have length $a$. Furthermore, we endow a plane tree $t$ with the uniform measure $m_t$ on its vertices.

\subsection{Multitype plane trees}

Fix $K \in \N$. A $K$-type plane tree is a pair $T := (t,\be_t)$ where $t \in \bT$ is a plane tree and $\be_t: t \mapsto [K]$. For $u \in t$, $\be_t(u)$ is called the type of the vertex $u$. We let $\bT^{(K)}$ be the set of $K$-type plane trees, and, for $i \in [K]$, we denote by $\bT^{(K,i)}$ the subset of $\bT^{(K)}$ of trees whose root has label $\be_t(\varnothing)=i$. We also denote by $N_i(T)$ the number of vertices $u$ of the tree $t$ which are of type $i$, that is, such that $\be_t(u)=i$, and we set $\mathbf{N}(T) := (N_1(T), \ldots, N_K(T))^{\intercal}$. Finally, we call $t$ the \textit{shape} of the multitype tree $T$.

\subsection{Multitype BGW trees}

Let us now define $K$-type BGW trees. For $K \in \N$, consider the set $\cW_K := \bigcup_{n \geq 0} [K]^n$. Let $\bze := (\zeta^{(i)})_{i \in [K]}$ be a family of probability distributions on $\cW_K$. For $i \in [K]$, we define a probability distribution on $\bT^{(K,i)}$ as follows. Consider a family of independent variables $(X_u^i, u \in \cU, i \in [K])$ with values in $\cW_K$, such that for all $(u,i) \in \cU \times [K]$, $X_u^i$ follows the distribution $\zeta^{(i)}$. We recursively construct a random $K$-type tree $\cT^{(i)} := (t, \be_t) \in \bT^{(K,i)}$ as follows. First, $\varnothing \in t$ and $\be_t(\varnothing) = i$; then, if $u \in t$ and $\be_t(u) = j$, then, for $k \in \N$, $uk \in t$ if and only if $1\leq k \leq |X_u^j|$ and in this case $\be_t(uk)=X_u^j(k)$.

In other words, the root of $\cT^{(i)}$ has type $i$, and vertices of type $j$ in $\cT^{(i)}$ have children independently according to $\zeta^{(j)}$. We refer to $\cT^{(i)}$ as a $\bze$-BGW tree with root type $i$.

An objet of importance is the \textit{projection} of the family $\bze$. For any $w \in \cW_K$ and $j \in [K]$, let $w^{(j)}$ be the number of $j$'s in $w$. We call projection of $w$ the element $p(w) = (w^{(1)}, \ldots, w^{(K)}) \in \Z_+^K$. For $i \in [K]$, denote by $\mu^{(i)}$ the probability distribution on $\Z_+^K$ defined as follows: for all $(k_1, \ldots, k_K) \in \Z_+^K$,
\begin{align*}
\mu^{(i)}(k_1,\ldots,k_K)=\sum_{\substack{w \in \cW^K \\ p(w)=(k_1,\ldots,k_K)}} \zeta^{(i)}(w).
\end{align*}

The family $\bmu:=(\mu^{(i)}, i \in [K])$ is called the \textit{projection} of $\bze$.

The generating function $\phi^{(i)}$ of $\mu^{(i)}$ is the series
\begin{align*}
    \phi^{(i)}(x_1, \ldots, x_K) = \sum_{(k_1, \ldots, k_K) \in \Z_+^K} \mu^{(i)}(k_1, \ldots, k_K) x_1^{k_1} \cdots x_K^{k_K}.
\end{align*}

We also define the mean matrix $M = (m_{i,j})_{i,j \in [K]}$ as the $K \times K$ matrix such that
\begin{align*}
m_{i,j} = \sum_{\textbf{z} \in \Z_+^K} z_j \mu^{(i)}(\textbf{z}),
\end{align*}
with the convenient notation $\textbf{z} := (z_1, \ldots, z_K) \in \Z_+^K$. In other words, $m_{i,j}$ is the expected number of children of type $j$ of a vertex of type $i$.

Numerous asymptotic structural properties of $\cT^{(i)}$ depend only on the projection $\bmu$. Let us mention some of them.

\begin{itemize}  
    \item(Critical) We say that $\bmu$ is critical if the spectral radius $\rho(M)$ of $M$ is equal to $1$, subcritical if $\rho(M) \leq 1$, and supercritical if $\rho(M) \geq 1$.
    \item(Entire) We say that $\bmu$ is entire if, for all $i$, the generating function $\phii$ of $\mui$ is entire.
    \item(Finite) A family $\bmu$ is called finite if, for all $i\in[K]$:
        \begin{equation}
        \label{eq:finias}
        \mathbb{P}\left( |\cT^{(i)}| < \infty\right) > 0,
        \end{equation}
    where $\cT^{(i)}$ is a $\bze$-BGW tree for some $\bze$ whose projection is $\bmu$.
    \item(Nondegenerate) We say that $\bmu$ is nondegenerate if it satisfies:
\begin{equation}
\label{eq:nondegenerate}
\exists i \in [K], \mui\left( \left\{ \textbf{z}, \sum_{j \in [K]} z_j \geq 2 \right\} \right) > 0.
\end{equation}
    \item(Nonlocalized) A projection $\bmu$ is called nonlocalized if, for all $X\in\R^K \backslash \{\bzero\}$, there exists $i\in[K]$ such that, if $(k_1, \ldots, k_K)$ is distributed according to $\mu^{(i)}$, then $\sum_{j=1}^K X_j k_j$ is not deterministic.   
    \item(Irreducible) We say that $\bmu$ is irreducible if $M$ is irreducible, that is, for all $i,j \in [K]$ there exists $p \in \N$ such that $M^p_{i,j} > 0$.
\end{itemize}

We also say that a family $\bze$ is critical (resp. supercritical, subcritical, entire, finite, nondegenerate, nonlocalized, irreducible) if its projection $\bmu$ is.

The Perron-Frobenius theorem states that, when $\bmu$ is irreducible, $\rho(M)>0$ is an eigenvalue of $M$ and is simple. Up to multiplicative constants, its left and right $\rho(M)$-eigenvectors (respectively $\mathbf{a}$ and $\mathbf{b}$) are unique and have positive coefficients. We always normalize them so that $$\sum_{i \in [K]} a_i = \sum_{i \in [K]} a_i b_i = 1.$$

\subsection{Size-conditioned BGW trees}

Our main object of interest is multitype BGW trees conditioned on their size being large, for a rather general notion of size. 

\begin{Definition}[Size-conditioned tree]
    Let $\ell \geq 1$ and $\Gamma \in \cM^*_{\ell,K}(\Z)$. Let also $\bg := (g_1, \ldots, g_\ell) \in \Z^\ell$. Let $\bze$ be a family of distributions on $\cW_K$. We define the tree $\cT^{(i)}_{\Gamma, \bg}$ as the tree $\cT^{(i)}$ conditioned on 
    \begin{equation}
    \label{eq:condition}
        \Gamma \left( \begin{array}{c}
           N_1\left( \cT^{(i)} \right)   \\
             \vdots \\
             N_K\left( \cT^{(i)} \right) 
        \end{array} \right) = 
        \left(\begin{array}{c}
             g_1 \\
             \vdots \\
             g_\ell
        \end{array}\right),
    \end{equation}
    provided that \eqref{eq:condition} holds with positive probability.
\end{Definition}

One of the main goals of the paper is to answer the following question: does there exist a multitype BGW tree $\tilde{\cT}^{(i)}$ whose offspring distribution is critical, and such that, for all $\bg$ such that \eqref{eq:condition} occurs with positive probability, we have the equality in distribution:
\begin{equation*}
    \cT^{(i)}_{\Gamma, \bg} \overset{(d)}{=} \tilde{\cT}^{(i)}_{\Gamma, \bg}?
\end{equation*}

When it is the case, then, the same way as Janson \cite{Jan12} in the monotype case, we can understand the behaviour of the tree $\cT^{(i)}_{\Gamma, \bg}$ as $|g_1|, \ldots, |g_K| \rightarrow \infty$ by studying the critical tree $\tilde{\cT}^{(i)}$ instead of $\cT^{(i)}$.

\subsection{Local limit of trees}

We say that a tree $T$ is locally finite if, for all $x \in T$, for all $r>0$, the number of vertices of $T$ at distance $\leq r$ from $x$ is finite.

\begin{Definition}
Let $(\cT_n)_{n \geq 1}$ be a sequence of (random) finite trees of respective roots $(\varnothing_n)_{n \geq 1}$, and $\cT_*$ be an a.s. locally finite infinite tree of root $\varnothing_*$. We say that $\cT_*$ is the local limit of the sequence $(\cT_n)_{n \geq 1}$ if, for any fixed $r \geq 1$, the following convergence holds in distribution:
\begin{equation*}
    B_r(\cT_n) \underset{n \rightarrow \infty}{\overset{(d)}{\rightarrow}} B_r(\cT_*),
\end{equation*}
where $B_r(\cT_n)$ is the ball of radius $r$ around $\varnothing_n$ in $\cT_n$, and $B_r(\cT_*)$ is the ball of radius $r$ around $\varnothing_*$ in $\cT_*$. We write, in this case:
\begin{align*}
    \cT_n \underset{n \rightarrow\infty}{\overset{(d),loc}{\rightarrow}} \cT_*.
\end{align*}
\end{Definition}

\section{Statement of the results}
\label{sec:results}

We define here our main object of study, which is a family of projections that we call exponential tiltings of a projection $\bmu$. Our main tool is a function $\chi$ which we define in Section \ref{ssec:function chi}, before stating our main results in Section \ref{ssec:main results}.

\subsection{Exponential tiltings and asymptotic directions}

In this paper, being given a projection $\bmu$, we are interested in a certain family of projections, which we call exponential tiltings of $\bmu$.

\begin{Definition}[Exponential tiltings]
Let $\bmu := (\mu^{(i)})_{i\in [K]}$ be a family of probability measures on $\Z_+^K$. For any fixed $\btheta:=(\theta_1, \ldots, \theta_K) \in \R^K$ we define the measures $(\mu_{\btheta}^{(i)})_{i\in [K]}$ as follows: for all $i \in [K]$, for all $k_1, \ldots, k_K \in \Z_+$:
\begin{equation}
\label{eq:equivalent tiltings}
\mu_{\btheta}^{(i)}(k_1,\dots,k_K) = \mu^{(i)}(k_1,\dots,k_K)\frac{e^{\sum_{i=1}^K \theta_i k_i}}{\E_{\mu^{(i)}}(e^{\btheta \cdot \mathbf{k}})},
\end{equation}
where in the expectation $\mathbf{k}:=(k_1, \ldots, k_K)$ is distributed according to $\mu^{(i)}$. We set $\bmu_{\btheta}:=(\mu^{(1)}_{\btheta}, \ldots, \mu^{(K)}_{\btheta})$, and call it an exponential tilting of $\bmu$. 
\end{Definition}

Observe that the tilting operation is consistent with the projection. Indeed, if $\bmu$ is the projection of a family $\bze$, for any $\btheta \in \R^K$, we can define the exponential tilting $\bze_{\btheta}$ of $\bze$ the same way as $\bmu_{\btheta}$ by setting, for all $\bw \in \cW_K$, all $i \in [K]$:

\begin{align*}
    \zeta^{(i)}_{\btheta}(\bw) = \zeta^{(i)}(\bw) \frac{e^{\sum_{i=1}^K \theta_i w^{(i)}}}{\sum_{\tilde{\bw} \in \cW_K} \zeta^{(i)}(\tilde{\bw}) e^{\sum_{i=1}^K \theta_i \tilde{w}^{(i)}}}.
\end{align*}

We let $\phi_\btheta^{(i)}$ be the generating function of $\mu_\btheta^{(i)}$, that is, $\phi_\btheta^{(i)}: \R^K \mapsto \R$ is such that
\begin{equation*}
\phi_\btheta^{(i)}(x_1, \ldots, x_K) = \sum_{k_1, \ldots, k_K \in \Z_+} \mu^{(i)}_\btheta( k_1, \ldots, k_K) x_1^{k_1} \cdots x_K^{k_K},
\end{equation*}
and we set $\phi^{(i)} := \phi_\bzero^{(i)}$ for all $i \in [K]$.

Note that, then, \eqref{eq:equivalent tiltings} rewrites: for all $\btheta \in \R^K$, all $x_1, \ldots, x_K \in \R^K$,
\begin{align*}
\phi_\btheta^{(i)}(x_1, \ldots, x_K) = \frac{\phi^{(i)}\left( x_1 e^{\theta_1}, \ldots, x_K e^{\theta_K} \right)}{\phi^{(i)}\left( e^{\theta_1}, \ldots, e^{\theta_K} \right)},
\end{align*}
if these quantities are well-defined.

We denote by $M_\btheta$ the mean matrix of $\left( \mu^{(i)}_\btheta \right)_{i \in [K]}$ and by $\rho_\btheta$ the spectral radius of $M_\btheta$. For all $i \in [K]$, we also define $\cT^{\btheta,(i)}$ as the multitype BGW tree with distribution $\bmu_{\btheta}$ and root of type $i$.\\
When $\btheta$ is critical, it turns out that there is a favoured direction that we call the asymptotic direction.
\begin{Definition}[Asymptotic direction]
\label{def:asymdir}
    For any critical and irreducible measure $\bmu_\btheta$, we call \textit{asymptotic direction} the (renormalized) $1$-left eigenvector $X_\btheta$ of $M_\btheta$, that is, the vector $X_\btheta \in \R^K$ with positive coordinates such that $X_{\btheta}^{\intercal}(M_{\btheta}-I_K)=\bzero$, renormalized so that $||X_\btheta||_1=1$. This vector exists and is unique by the Perron-Frobenius theorem.
\end{Definition}

\subsection{The function $\chi$ and $\Gamma$-equivalence}
\label{ssec:function chi}

We are particularly interested in an equivalence relation on the set of  exponential tiltings. To define this equivalence relation, we first introduce a function $\chi$, similar to the log-Laplace transform of the functions $(\phi^{(i)})_{1\leq i\leq K}$ which will play an important role throughout the paper.

\begin{Definition}
\label{def:functionc}
    The function $\chi: \R^K \rightarrow \R^K$ is defined by $\chi(\btheta)=(\chi_1(\btheta), \ldots, \chi_K(\btheta))^{\intercal} \in \R^K$ where, for all $i \in [K]$:
\[
\chi_i:\left\{ 
\begin{array}{ll}
 \mathbb{R}^K \xrightarrow{} \mathbb{R} \\
 \btheta  \mapsto \log (\phi^{(i)}(e^\btheta))-\theta_i,
\end{array}
\right.
\]
where we recall that $e^\btheta := (e^{\theta_1}, \ldots, e^{\theta_K})$.\\
\noindent We call $\mathcal{C}_{image}:=\chi(\R^K)$ the image of $\R^K$ by the function $\chi$.
\end{Definition} 

Observe that the function $\chi$ is not a bijection (see Appendix \ref{sec:appenonze}). Its interest is twofold: first, it naturally appears in the following definition of equivalent tiltings; second, its Jacobian is equal to $M_{\theta}-I_K$, which helps provide a characterization of critical measures in terms of $\chi$.

We now introduce the notion of equivalent tiltings, which are tiltings under which the distribution of the size-conditioned tree does not vary.

\begin{Definition}[$\Gamma$-equivalent tiltings]
\label{def:eqtilt}
    Let $\Gamma \in \cM_{\ell,K}(\Z)$, for some $\ell \geq 1$. We say that $\btheta, \btheta' \in \R^K$ are $\Gamma$-equivalent if
    \begin{align*}
        \chi(\btheta)-\chi(\btheta') \in \left( \ker \, \Gamma \right)^\perp.
    \end{align*}
    If this holds, we write $\btheta \sim_\Gamma \btheta'$.
\end{Definition}
For convenience and with a slight abuse of notation, we will sometimes write that $\bmu_{\btheta}$ is $\Gamma$-equivalent to $\bmu_{\btheta'}$ whenever $\btheta \sim_\Gamma \btheta'$.\\

In the rest of the paper, unless explicitly mentioned, we fix an integer $\ell \geq 1$ and a matrix $\Gamma \in \cM^*_{\ell,K}(\Z)$. For any $\bg \in \Z^\ell$, any $\btheta \in \R^K$, any $i \in [K]$, define $\cT^{\btheta,(i)}_{\Gamma, \bg}$ as the tree $\cT^{\btheta,(i)}$ conditioned on $\Gamma \ \bN(\cT^{\btheta,(i)}) = \bg$.
The interest of the Definition \ref{def:eqtilt} lies in the following result. 

\begin{Proposition}
Fix $i \in [K],\ \btheta,\btheta' \in \R^K$ such that $\btheta \sim_\Gamma \btheta'$, and $\bg \in \Z^\ell$ such that \\ $\Prob(\Gamma \ \bN(\cT^{\btheta,(i)})=\bg)>0$. Then, the following holds:
\begin{itemize}
    \item[(i)] We have $\Prob(\Gamma \ \bN(\cT^{\btheta',(i)})=\bg)>0$.
    \item[(ii)] $\cT^{\btheta,(i)}_{\Gamma, \bg} \overset{(d)}{=} \cT^{\btheta',(i)}_{\Gamma, \bg}$.
\end{itemize}
\end{Proposition}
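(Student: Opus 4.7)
The plan is to compute the ratio of likelihoods between the laws of $\cT^{\btheta,(i)}$ and $\cT^{\btheta',(i)}$ tree by tree, and to show that, under $\Gamma$-equivalence, this ratio is constant on every fibre $\{\Gamma \bN = \bg\}$. Both assertions then follow by summation.

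Fix some $\bze$ with projection $\bmu$, and consider a finite tree $T = (t, \be_t) \in \bT^{(K,i)}$. Writing $\Prob_{\bze_\btheta}(\cT^{\btheta,(i)} = T) = \prod_{u \in t} \zeta_\btheta^{(\be_t(u))}(X_u)$ and expanding via~\eqref{eq:equivalent tiltings}, the ratio factors as
\begin{equation*}
\frac{\Prob_{\bze_\btheta}(\cT^{\btheta,(i)} = T)}{\Prob_{\bze_{\btheta'}}(\cT^{\btheta',(i)} = T)} = \exp\bigl((\btheta - \btheta') \cdot (\bN(T) - \be_i)\bigr) \cdot \prod_{j=1}^K \left( \frac{\phi^{(j)}(e^{\btheta'})}{\phi^{(j)}(e^\btheta)} \right)^{N_j(T)},
\end{equation*}
where the first exponent uses the identity $\sum_{u \in t} p(X_u) = \bN(T) - \be_i$, reflecting that every vertex except the root (of type $i$) is the child of exactly one other vertex. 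Using $\log \phi^{(j)}(e^\btheta) = \chi_j(\btheta) + \theta_j$ in the second factor, the two $(\btheta-\btheta')\cdot \bN(T)$ contributions cancel and the ratio collapses to
\begin{equation*}
e^{\theta'_i - \theta_i} \cdot \exp\bigl(-\bN(T) \cdot (\chi(\btheta) - \chi(\btheta'))\bigr).
\end{equation*}

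The equivalence hypothesis enters in a single step: $\chi(\btheta) - \chi(\btheta') \in (\ker \Gamma)^\perp = \text{Im}(\Gamma^\intercal)$, so there exists $\mathbf{v} \in \R^\ell$ with $\chi(\btheta) - \chi(\btheta') = \Gamma^\intercal \mathbf{v}$. Then $\bN(T) \cdot (\chi(\btheta) - \chi(\btheta')) = (\Gamma \bN(T)) \cdot \mathbf{v} = \bg \cdot \mathbf{v}$ on the event $\{\Gamma \bN(T) = \bg\}$. Setting $C := \exp(\theta'_i - \theta_i - \bg \cdot \mathbf{v}) > 0$, we conclude that for every $T \in \bT^{(K,i)}$ with $\Gamma \bN(T) = \bg$,
\begin{equation*}
\Prob_{\bze_\btheta}(\cT^{\btheta,(i)} = T) = C \cdot \Prob_{\bze_{\btheta'}}(\cT^{\btheta',(i)} = T).
\end{equation*}

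Summing this identity over all such $T$ yields $\Prob(\Gamma \bN(\cT^{\btheta,(i)}) = \bg) = C \cdot \Prob(\Gamma \bN(\cT^{\btheta',(i)}) = \bg)$, which proves (i) since $C > 0$ and the left-hand side is assumed positive. For (ii), dividing the tree-by-tree identity by this summed one cancels $C$ and yields equality of the conditional laws of $\cT^{\btheta,(i)}_{\Gamma, \bg}$ and $\cT^{\btheta',(i)}_{\Gamma, \bg}$ on every $T$. There is no real obstacle beyond the bookkeeping; the only substantive point is recognising that the vector paired with $\bN(T)$ in the Radon-Nikodym derivative is precisely $\chi(\btheta) - \chi(\btheta')$, so that $\Gamma$-equivalence is exactly the condition that turns this pairing into a function of $\Gamma \bN(T)$ alone.
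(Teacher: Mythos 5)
Your proof is correct, and it is genuinely more self-contained than what the paper does: the paper simply declares (i) clear and outsources (ii) to \cite[Proposition 11]{The23}, whereas you carry out the underlying likelihood-ratio computation explicitly. Your key steps all check out: the identity $\sum_{u\in t}p(X_u)=\bN(T)-\be_i$ (every non-root vertex is counted once as someone's child, the root of type $i$ is not), the collapse of the Radon--Nikodym derivative to $e^{\theta'_i-\theta_i}\exp\bigl(-\bN(T)\cdot(\chi(\btheta)-\chi(\btheta'))\bigr)$ after substituting $\log\phi^{(j)}(e^{\btheta})=\chi_j(\btheta)+\theta_j$, and the use of $(\ker\Gamma)^{\perp}=\mathrm{Im}(\Gamma^{\intercal})$ to make this ratio constant, equal to $C=\exp(\theta'_i-\theta_i-\bg\cdot\mathbf{v})$, on the fibre $\{\Gamma\,\bN(T)=\bg\}$; summing then gives both (i) and (ii) at once. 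This is presumably the same argument as in the cited reference, so the mathematical content is not new, but it has the advantage of making the proposition independent of \cite{The23} and of isolating exactly where $\Gamma$-equivalence is used. One small point worth making explicit: your summation runs over \emph{finite} trees $T$, so you are implicitly using that the event $\{\Gamma\,\bN(\cT^{\btheta,(i)})=\bg\}$ with $\bg\in\Z^{\ell}$ entails $\bN(\cT^{\btheta,(i)})\in\Z_+^{K}$, hence $|\cT^{\btheta,(i)}|=\sum_i N_i(\cT^{\btheta,(i)})<\infty$; this is the intended reading of the conditioning (otherwise $\Gamma\,\bN$ is not a well-defined element of $\Z^{\ell}$), but a sentence saying so would close the argument cleanly, since for noncritical tiltings the tree may be infinite with positive probability.
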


\begin{proof}
    The proof of (i) is clear (and it actually holds for any $\btheta' \in \R^K$), while (ii) is a consequence of \cite[Proposition $11$]{The23}.
\end{proof}

\subsection{Main results}
\label{ssec:main results}

Our main result is a dichotomic description of the structure of the cone of asymptotic directions, defined as follows:
\begin{Definition}[Asymptotic cone]
We define the asymptotic cone
  $$\mathcal{D}_{asy}:=\{\lambda X_{\btheta}|\lambda\in(0,+\infty),\btheta\in\mathcal{M}_{\text{crit}}\},$$ 
as the set of all asymptotic directions (up to a multiplicative constant). 
Furthermore, for any $\overline{\btheta}\in\R^K$ and any $\Gamma\in \cM^*_{\ell,K}(\Z)$ of rank $\ell$, we define the $(\Gamma,\overline{\btheta})$-asymptotic cone as:
\[
\mathcal{D}^{\Gamma,\overline{\btheta}}_{asy}:=\{\lambda \Gamma X_{\btheta} |\lambda\in(0,+\infty),\btheta\in\mathcal{M}_{\text{crit}}, \btheta \sim_\Gamma \overline{\btheta}\}.
\]
\end{Definition}

We can characterize the critical tiltings that are $\Gamma$-equivalent to a given $\overline{\btheta}$. This is done in the following theorem.
\begin{Theorem}
\label{thm:accessdir2}
Assume that $\bmu$ is entire, finite, nondegenerate, nonlocalized and irreducible.
Then, the set $\mathcal{D}_{asy}$ is nonempty, open and convex.\\
Furthermore, for any $\ell \geq 1$, any $\Gamma\in\cM^*_{\ell,K}(\Z)$ and any $\overline{\btheta}\in\R^K$, either $\mathcal{D}_{asy}^{\Gamma,\overline{\btheta}}=\{\bzero\}$ or $\bzero \notin \mathcal{D}_{asy}^{\Gamma,\overline{\btheta}}$.
\begin{itemize}
\item In the first case, $\overline{\btheta}$ is critical, $\Gamma X_{\overline{\btheta}}=\bzero$ and $\{\btheta \ | \ \btheta \sim_\Gamma \overline{\btheta}\}=\{\overline{\btheta}\}$.
\item In the second case, for any $X\in\mathcal{D}_{asy}$ such that $\Gamma X\not = 0$, there exists a unique couple $(\btheta,\lambda) \in \mathcal{M}_{\text{crit}} \times (0,+\infty)$ such that $\btheta \sim_\Gamma \overline{\btheta}$ and $\Gamma X_{\btheta}=\lambda \Gamma X$. This $\btheta$ is the unique maximizer of $f_X:\btheta\mapsto -X^{\intercal} \chi(\btheta)$ on $\{\btheta \ | \ \btheta \sim_\Gamma \overline{\btheta}\}$.
\end{itemize}
\end{Theorem}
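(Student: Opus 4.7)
The plan is variational. The entire statement about $\mathcal{D}_{asy}^{\Gamma,\overline{\btheta}}$ will be obtained by maximizing $f_X(\btheta) = -X^{\intercal}\chi(\btheta)$ on the equivalence class $\mathcal{E}:=\{\btheta \ | \ \btheta \sim_\Gamma \overline{\btheta}\}$ and reading off the first-order conditions. The key analytic fact is that
\[
f_X(\btheta) \;=\; \sum_{i=1}^K X_i\theta_i \;-\; \sum_{i=1}^K X_i \log\phi^{(i)}(e^\btheta)
\]
is concave on $\R^K$: each $\btheta \mapsto \log\phi^{(i)}(e^\btheta)$ is a cumulant generating function and hence convex, while the coefficients $X_i$ are strictly positive whenever $X \in \mathcal{D}_{asy}$, since every asymptotic direction $X_\btheta$ has positive entries by Perron--Frobenius. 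Using that the Jacobian of $\chi$ is $M_\btheta - I_K$, the gradient reads $\nabla f_X(\btheta) = (I_K - M_\btheta^{\intercal})X$, which vanishes exactly when $\btheta\in\mathcal{M}_{\text{crit}}$ and $X$ is a positive multiple of $X_\btheta$.

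The first task is to establish the preliminary properties of $\mathcal{D}_{asy}$. Nonemptiness follows from the existence of critical tiltings established in Section~\ref{sec:properties of Mcrit} under our hypotheses. For openness, irreducibility makes $\mathcal{M}_{\text{crit}}=\{\rho_\btheta = 1\}$ a smooth codimension-$1$ submanifold of $\R^K$ (simplicity of the Perron eigenvalue ensures that $\rho_\btheta$ varies smoothly with nonvanishing gradient), and smoothness of $\btheta \mapsto X_\btheta$ with nondegenerate differential guarantees that, after scaling by $\lambda>0$, the image fills an open cone. Convexity should be derived from the convex-analytic features of $\chi$ developed in Section~\ref{sec:propertiesofthefunctionc}; the natural route is to show that $\mathcal{C}_{image}$ is convex and then to interpret $\mathcal{D}_{asy}$ as the cone of outward normals to its boundary at critical tiltings.

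Next I would tackle the dichotomy and the second case together. The constraint set $\mathcal{E}$ is the preimage under $\chi$ of the affine subspace $\chi(\overline{\btheta}) + \mathrm{Im}(\Gamma^{\intercal})$, with tangent space $T_\btheta\mathcal{E} = \{v : (M_\btheta - I_K)v \in \mathrm{Im}(\Gamma^{\intercal})\}$ at a regular point. A Lagrange-multiplier computation then shows that a critical point of $f_X\big|_\mathcal{E}$ is exactly a $\btheta \in \mathcal{M}_{\text{crit}}\cap\mathcal{E}$ for which $\Gamma X_\btheta$ is a positive scalar multiple of $\Gamma X$, the sign of the multiplier being forced by positivity of $X$ and $X_\btheta$. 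Concavity of $f_X$ promotes any such critical point to a global maximum, and strict concavity transverse to the one-dimensional null direction of the Hessian (which the nonlocalization hypothesis should rule out on $T_\btheta\mathcal{E}$) yields uniqueness of the maximizer. The dichotomy is then structural: if $\bzero \in \mathcal{D}_{asy}^{\Gamma,\overline{\btheta}}$, applying the same argument with $X = X_\btheta$ forces the equivalence class to collapse to $\{\btheta\}$, yielding $\overline{\btheta} = \btheta$ and the first case; otherwise we fall into the second case.

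The main obstacles are twofold. First, the convexity of $\mathcal{D}_{asy}$ appears genuinely technical and will likely require a careful analysis of $\chi(\mathcal{M}_{\text{crit}})$ as part of a convex body. Second, ensuring the maximization is well-posed---namely that $f_X$ actually attains its maximum on $\mathcal{E}$---requires a coercivity estimate: one needs to rule out that a maximizing sequence escapes to infinity inside $\mathcal{E}$. This is precisely where the entire, finite and nondegenerate hypotheses on $\bmu$ should become essential, controlling the growth of $\log\phi^{(i)}(e^\btheta)$ and guaranteeing that some direction of $\mathcal{E}$ makes $f_X \to -\infty$, so that the maximum is attained in the interior.
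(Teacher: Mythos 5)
Your variational framework (maximize $f_X$ on the $\Gamma$-equivalence class, read the stationarity conditions, use concavity for uniqueness, split the dichotomy via whether $\Gamma X_{\btheta}$ can vanish) is the right one and matches the paper's structure. However, there is a genuine gap precisely at the point you flag as the ``second obstacle,'' and your proposed route to close it would not work.

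The gap is the existence of a maximizer of $f_X$ on $\mathcal{E} = \{\btheta : \btheta \sim_\Gamma \overline{\btheta}\}$. You write that the entire/finite/nondegenerate hypotheses ``should control the growth of $\log\phi^{(i)}(e^\btheta)$ and guarantee that some direction of $\mathcal{E}$ makes $f_X \to -\infty$.'' This is insufficient in two ways. First, you need $f_X \to -\infty$ along \emph{every} escaping direction, not some direction; a single bad direction does not prevent a maximizing sequence from escaping along another. Second, the needed lower bound $X^\intercal \chi(\btheta) \geq a + b\|\btheta\|_2$ is not a straightforward growth estimate on the $\log\phi^{(i)}$: for a generic positive $X$, concavity of the $\log\phi^{(i)}$ alone gives no such bound, and the coercive inequality can fail if $X$ is on the wrong side. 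The paper's proof of this estimate is the real content of the ``accessible directions'' machinery (Definition~\ref{def:accessible} and Lemmas~\ref{lem:accessiblebound}, \ref{lem:accessiblestrongbound}): one constructs, from a nontrivial finite tree whose type-count vector is collinear with an accessible direction $X'$, an infinite family of concatenated trees whose total probability under every tilting is at most $1$, yielding the uniform bound $X'^\intercal\chi(\btheta) \geq m_{X'}$; one then beefs this up to the coercive estimate by covering a neighbourhood of $X$ with accessible directions (strong accessibility), and composes with Lemma~\ref{lem:explicitboundonc} via Cauchy--Schwarz. Without this idea, the maximizer existence in your argument is not established, and the entire second case of the dichotomy is left hanging. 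Note also a small inaccuracy: under the nonlocalization hypothesis the Hessian of $f_X$ is strictly negative definite (Lemma~\ref{lem:resultonc1}(i)), not merely negative semidefinite with a one-dimensional kernel; so uniqueness, once a maximizer exists, is cleaner than your phrasing suggests.

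A secondary gap is the convexity of $\mathcal{D}_{asy}$, which you also flag but do not resolve. The paper does not deduce it from convexity of $\mathcal{C}_{image}$ and a normal-cone picture (though that heuristic is correct); it instead runs a connectedness argument on the segment $\{sX_{\btheta_1} + (1-s)X_{\btheta_2}\}$, using Lemma~\ref{lem:explicitboundonc} for compactness and Proposition~\ref{prop:opendirection2} for openness. Your sketch would need to be filled in with an argument of comparable substance.
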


The proof of this result is based on Theorem \ref{thm:accessdir}, where we also give a different characterization of the set $\mathcal{D}_{asy}$. 

As a corollary, we obtain the following result.

\begin{Corollary}\label{cor:coefsinz}
Assume that $\bmu$ is entire, finite, nondegenerate, nonlocalized and irreducible. Then for any $\Gamma\in\cM^*_{\ell,K}(\Z)$, there exists $\btheta \sim_\Gamma \bzero$ such that $\bmu_{\btheta}$ is critical.
\end{Corollary}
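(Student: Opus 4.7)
The plan is to apply Theorem \ref{thm:accessdir2} with the choice $\overline{\btheta} = \bzero$, and check that in both branches of the stated dichotomy we obtain a critical tilting $\Gamma$-equivalent to $\bzero$.

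First I would handle the easy branch. In the case where $\mathcal{D}_{asy}^{\Gamma,\bzero} = \{\bzero\}$, Theorem \ref{thm:accessdir2} asserts in particular that $\overline{\btheta}=\bzero$ is itself critical. So $\bmu=\bmu_{\bzero}$ is already critical, and taking $\btheta=\bzero$ settles the claim, since trivially $\bzero \sim_\Gamma \bzero$.

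The bulk of the argument is the second branch, where $\bzero \notin \mathcal{D}_{asy}^{\Gamma,\bzero}$. Here it suffices to exhibit a single $X \in \mathcal{D}_{asy}$ with $\Gamma X \neq \bzero$: the second bullet of Theorem \ref{thm:accessdir2} then directly produces a critical $\btheta \sim_\Gamma \bzero$ with $\Gamma X_{\btheta}$ a positive multiple of $\Gamma X$. To produce such an $X$, I would use that by Theorem \ref{thm:accessdir2} the set $\mathcal{D}_{asy}$ is nonempty and open in $\R^K$, while $\ker \Gamma$ is a proper linear subspace of $\R^K$ (since $\Gamma \in \cM^*_{\ell,K}(\Z)$ is, by definition, nonzero). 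A proper subspace has empty interior, so $\mathcal{D}_{asy}$ cannot be contained in $\ker \Gamma$, and any $X \in \mathcal{D}_{asy} \setminus \ker \Gamma$ does the job.

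Given the strength of Theorem \ref{thm:accessdir2}, this is essentially a one-line deduction and I do not anticipate a real obstacle; the only point worth spelling out is the openness argument that guarantees the existence of $X \in \mathcal{D}_{asy}$ not lying in $\ker \Gamma$, which is why the nontriviality of $\Gamma$ (encoded in the definition of $\cM^*_{\ell,K}(\Z)$) is used.
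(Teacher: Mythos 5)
Your proof is correct and follows the same route as the paper's: split according to the dichotomy of Theorem \ref{thm:accessdir2} with $\overline{\btheta}=\bzero$, handle the case $\mathcal{D}_{asy}^{\Gamma,\bzero}=\{\bzero\}$ by noting $\bmu$ is then already critical, and otherwise use nonemptiness and openness of $\mathcal{D}_{asy}$ to find $X\in\mathcal{D}_{asy}$ with $\Gamma X\neq\bzero$. The paper leaves the last step implicit; your observation that a nonempty open set cannot lie in the proper subspace $\ker\Gamma$ is exactly the right justification.
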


Observe in particular that the matrix $\Gamma$ may have negative coefficients.

\emph{Idea of the proof}

The strategy of the proof of Theorem \ref{thm:accessdir2} is to study the properties of the set $\cM_{crit}$ of all critical exponential tiltings of $\bmu$ - and not only those that are $\Gamma$-equivalent to $\bmu$. This set admits an elegant parametrization by the function $\chi$ defined in Definition \ref{def:functionc}; the image of $\cM_{crit}$ under $\chi$ is the boundary of the convex set $\mathcal{C}_{image}$, whose study is central in our proofs.

\begin{Remark}
We restrict ourselves to the case where $\bmu$ is entire. See Appendix \ref{sec:appendix} for an example of what may go wrong if it is not the case, even in the case where $\bmu$ is supercritical.
\end{Remark}

As a corollary of Theorem \ref{thm:accessdir2}, we obtain the following local limit result for size-conditioned trees. We require for this an additional \textit{aperiodicity} condition.

\begin{Definition}
\label{def:aperiodic}
    We say that a multitype projection $\bmu$ with support $Supp(\bmu)$ is \textit{aperiodic} if, for each $x \in \Z^K$, the smallest subgroup of $\Z^K$ containing $x+Supp(\bmu)$ is $\Z^K$.
\end{Definition}

\begin{Theorem}
\label{thm:critical local limit}
Let $\bze$ be a critical, aperiodic, finite, nondegenerate, nonlocalized and irreducible distribution, and set $\Gamma \in \cM^*_{\ell,K}(\Z)$ of rank $\ell$. Assume that either 
       \begin{itemize}
           \item  $\bze$ is not critical ;
           \item or $\bze$ is critical and $\Gamma X_{\bze} \neq \bzero$, where $X_{\bze}$ is the normalized $1$-left eigenvector of $M$.
       \end{itemize}
Let also $\alpha := (\alpha_1, \ldots, \alpha_\ell) \in \Gamma( \mathcal{D}_{asy})$ such that $\alpha \neq \bzero$. Let $(\bk(n), n \geq 1)$ be a sequence of elements of $\Z_+^\ell$ satisfying 
    \begin{itemize}
        \item $||\bk(n)||_1 \rightarrow \infty$;
       \item for all $i \in [K]$, $\frac{k_i(n)}{||\bk(n)||_1} \underset{n \rightarrow \infty}{\rightarrow} \frac{\alpha_i}{||\alpha||_1}$;
        \item $\Prob(\Gamma \ \bN(\cT^{(1)})=\bk(n)^\intercal)>0$.
    \end{itemize}
    Then, we have:
    \begin{align*}
        \cT^{(1)}_{\Gamma, \bk(n)} \underset{n \rightarrow \infty}{\overset{(d),loc}{\rightarrow}} \cT_{*}^{(1)}
    \end{align*}
   where $\cT_{*}^{(1)}$ is the multitype Kesten tree associated to some tilted critical measure $\tbze$.
\end{Theorem}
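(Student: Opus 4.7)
The plan is a two-step reduction to the critical case. I would first use Theorem \ref{thm:accessdir2} to replace the (possibly noncritical) law $\bze$ by a $\Gamma$-equivalent critical tilting $\tbze$, and then invoke a local limit theorem for critical multitype BGW trees on the resulting tree.

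For the first step, I apply Theorem \ref{thm:accessdir2} with $\overline{\btheta}=\bzero$. In both of the alternatives of Theorem \ref{thm:critical local limit}, the first bullet of Theorem \ref{thm:accessdir2} is excluded: either $\bzero \notin \mathcal{M}_{\text{crit}}$ (non-critical case), or $\bzero \in \mathcal{M}_{\text{crit}}$ but $\Gamma X_{\bzero} = \Gamma X_{\bze} \neq \bzero$ (critical case). We are therefore in the second bullet of Theorem \ref{thm:accessdir2}. Since $\alpha \in \Gamma(\mathcal{D}_{asy}) \setminus \{\bzero\}$, we may write $\alpha = \Gamma X$ for some $X \in \mathcal{D}_{asy}$, and the theorem produces a unique critical $\btheta \sim_\Gamma \bzero$ with $\Gamma X_{\btheta} = \lambda \alpha$ for some $\lambda > 0$. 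Let $\tbze := \bze_{\btheta}$; its projection is critical, and the remaining regularity assumptions (aperiodicity, finiteness, nondegeneracy, nonlocalization, irreducibility) are preserved under exponential tilting. By the Proposition following Definition \ref{def:eqtilt}, we then obtain
\begin{equation*}
\cT^{(1)}_{\Gamma, \bk(n)} \overset{(d)}{=} \tilde{\cT}^{(1)}_{\Gamma, \bk(n)},
\end{equation*}
where $\tilde{\cT}^{(1)}$ denotes the $\tbze$-BGW tree with root of type $1$.

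For the second step, it remains to prove the local convergence of the critical size-conditioned tree $\tilde{\cT}^{(1)}_{\Gamma, \bk(n)}$ to the multitype Kesten tree associated to $\tbze$. By construction, the image $\Gamma X_{\btheta}$ of the asymptotic direction of $\tbze$ under $\Gamma$ is collinear with $\alpha$, which is precisely the asymptotic proportion prescribed by $\bk(n)/||\bk(n)||_1$. This is the critical regime in which convergence to the multitype Kesten tree is expected, analogous to Stephenson's setting in \cite{Ste18}.

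The main obstacle lies in this final step. Stephenson's local limit theorem is stated for the scalar conditioning $\sum_i \gamma_i N_i(\cT) = n$ with nonnegative integer coefficients, whereas we require it for the matrix conditioning $\Gamma \bN(\cT) = \bk(n)$, with $\Gamma \in \cM^*_{\ell,K}(\Z)$ having possibly negative entries and $\ell \geq 1$ rows. The adaptation is however essentially routine: the one-dimensional local central limit estimate for $\sum_i \gamma_i N_i$ that underlies Stephenson's proof can be replaced by its $\ell$-dimensional analogue for $\Gamma \bN$ along the direction $\alpha/||\alpha||_1$, for which the aperiodicity hypothesis (Definition \ref{def:aperiodic}) provides the required full-lattice support under $\tbze$. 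Once this multivariate local limit estimate is in hand, the spinal decomposition and absolute-continuity comparisons of \cite{Kes86, Ste18} between the $r$-neighbourhood under $\tbze$-conditioning and the $r$-neighbourhood of the multitype Kesten tree go through verbatim, yielding the announced local convergence.
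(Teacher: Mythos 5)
Your first step is exactly the paper's reduction: apply Theorem \ref{thm:accessdir2} with $\overline{\btheta}=\bzero$, exclude the degenerate alternative using the hypotheses, obtain a critical $\btheta \sim_\Gamma \bzero$ with $\Gamma X_{\btheta}$ collinear to $\alpha$, and transfer the conditioned law via $\Gamma$-equivalence. This part is correct and matches the proof of Theorem \ref{thm:critical local limit} given in Section \ref{sec:locscallim}.

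The gap is in your second step, which is precisely where the paper's real work lies. You assert that the critical case follows from Stephenson's argument by replacing a one-dimensional local CLT with an $\ell$-dimensional one and that the spinal/absolute-continuity comparisons then go through ``verbatim''; this is not established, and it is not routine. The difficulty is that the event $\{\Gamma\,\bN(\cT^{(1)})=\bk(n)^{\intercal}\}$ is a union over exact type-count vectors $\bn$ with $\Gamma\bn=\bk(n)^{\intercal}$, and since $\Gamma$ may have negative entries this fibre contains vectors $\bn$ that are large but point far away from the direction $\alpha$; aperiodicity of $\bmu$ alone does not rule out that such atypical $\bn$ dominate the conditioning. The paper handles this by proving a concentration statement (Lemma \ref{lem:proche de la direction asymptotique}): under the conditioning, $\bN(\cT^{(1)})$ is within $o(\|\bk(n)\|_1)$ of $\|\bk(n)\|_1 X_{\btheta}$ in every coordinate. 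Establishing this requires three ingredients you do not supply: (i) a lattice argument (Lemma \ref{lem:Blomme}) producing nonnegative integer preimages $\mathbf{r}(n)$ of $\bk(n)$ close to the direction $X_{\btheta}$; (ii) the resulting lower bound $\Prob(\Gamma\,\bN(\cT^{(1)})=\bk(n)^{\intercal})=\exp(-o(n))$ (Lemma \ref{lem:probgamma}, via \cite[Theorem 4.7]{ADG18} and aperiodicity), which is needed to beat the exponentially small probability of the bad event; and (iii) moderate-deviation upper bounds for the blob counts $N_j^{(1)}$ obtained through the cycle lemma and their exponential moments. Once concentration is in hand, the paper does not redo a Stephenson-type spinal argument at all: it conditions further on the exact value of $\bN(\cT^{(1)})$, which by concentration lies in the admissible window, and invokes the existing local limit theorem for exact type-count conditionings, \cite[Theorem 3.1]{ADG18}. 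So your proposed route (a multivariate local limit theorem for $\Gamma\bN(\cT)$ plus a verbatim transfer of \cite{Ste18}) would itself require proving essentially these same estimates, and as written your proof defers rather than supplies them; in particular the claim that Definition \ref{def:aperiodic} gives the needed full-lattice behaviour for the $\Z^\ell$-valued statistic $\Gamma\bN(\cT)$, uniformly over the unbounded set of contributing type-count vectors, is exactly the point that needs an argument.
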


\noindent This theorem is proved in Section \ref{sec:locscallim}.

\section{Properties of the set $\cM_{crit}$ of critical tiltings}
\label{sec:properties of Mcrit}

The aim of this section is to investigate the set $\cM_{crit}$ of critical tiltings of a projection $\bmu$, that is, the set
$$
\cM_{crit} := \left\{ \btheta \in \R^K, \rho_\btheta=1 \right\}.
$$

Note that, a matrix $\Gamma$ being given, the elements of $\cM_{crit}$ are not necessarily $\Gamma$-equivalent to $\bzero$.

\begin{Remark}[Results on $\chi$, $\mathcal{C}_{image}$ and $\cM_{crit}$]
The precise links between $\mathcal{C}_{image}$ and $\cM_{crit}$ may not appear completely obvious at first glance. Figure \ref{fig:cimage} represents an example of $\mathcal{C}_{image}$. It turns out that it is always a closed and convex set (see Proposition \ref{prop:convex}) and, in addition, on the boundary the normal vector has positive coefficients. The most important property of $\mathcal{C}_{image}$ is that its boundary is in correspondence with $\cM_{crit}$ in the sense that $\chi$ restricted to $\cM_{crit}$ is a bijection between $\cM_{crit}$ and the boundary of $\mathcal{C}_{image}$ (see Proposition \ref{prop:bijection}). Furthermore, for any $\btheta\in\cM_{crit}$, the asymptotic direction $X_{\btheta}$ is (up to a multiplicative constant) the normal vector of the boundary of $\mathcal{C}_{image}$ at $\chi(\btheta)$.

\begin{figure}
\begin{center}
\begin{tikzpicture}[scale=0.5]

\fill[pattern=horizontal lines, pattern color=blue!20,samples=200] plot[domain=-5:-0.6945] ({2*\x-2*ln(1-2*exp(\x))+ln(1-exp(\x))-ln(3)-ln(exp(\x)+1},{ln(1-2*exp(2*\x)*(1+exp(\x)))-ln(3)-2*\x}) -- (7.5,9) -- cycle;

\draw[thick,color=blue] plot[domain=-5:-0.694,samples=300] ({2*\x-2*ln(1-2*exp(\x))+ln(1-exp(\x))-ln(3)-ln(exp(\x)+1},{ln(1-2*exp(2*\x)*(1+exp(\x)))-ln(3)-2*\x});

\draw[->] (-14,0) -- (10,0) node[anchor=west] {$\chi_1(\btheta)$};
\draw[->] (0,-2.5) -- (0,10) node[anchor=south] {$\chi_2(\btheta)$};
\end{tikzpicture}
\caption{ In dashed blue, the set $\mathcal{C}_{image}$ for the projection $\bmu$ with generating functions \\ $\phi^{(1)}(x_1,x_2)  = \frac{1}{3}\left( x_1x_2^2 + x_1x_2 + x_2 \right)$ and $\phi^{(2)}(x_1,x_2) = \frac{1}{3}\left( x_1x_2+x_2+1 \right)$. \qquad \qquad \ \ \ \ \textcolor{white}{Non}  \\ The boundary of $\mathcal{C}_{image}$, in blue, has parametrization: \ \ \ \ \ \qquad \qquad \qquad \qquad \qquad \qquad \textcolor{white}{Oui}\\ $\Big(\big(2s-2\ln(1-2e^s)+\ln(1-e^s)-\ln(3)-\ln(1+e^s)\big),\big(\ln(1-2e^{2s}(1+e^s))-\ln(3)-2s\big)\Big)$,\\
for $s\in(-\infty,-\ln(2))$. \ \ \ \ \qquad \qquad \qquad \qquad \qquad \qquad \qquad \qquad \qquad \qquad \qquad \qquad\textcolor{white}{Peut-être}}
\label{fig:cimage}
\end{center}
\end{figure}
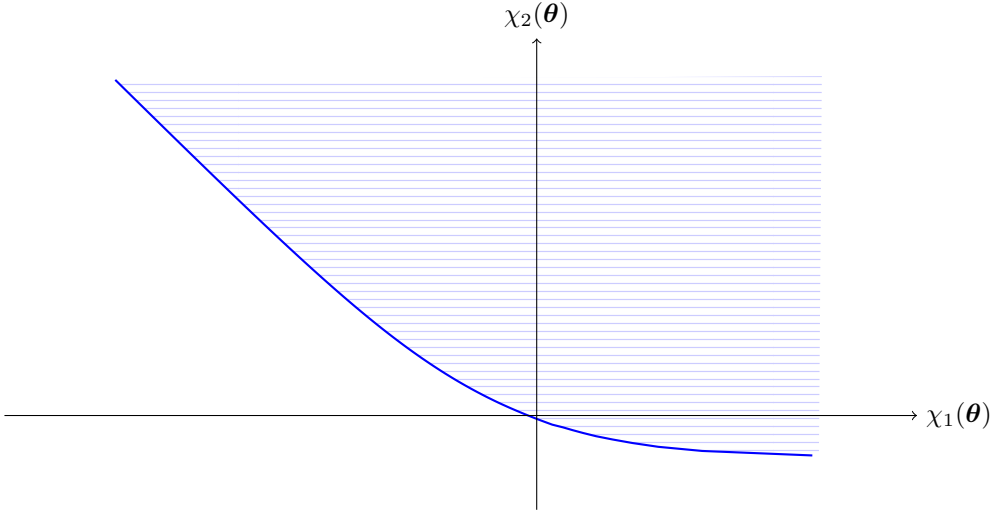
\end{Remark}

\subsection{Critical tiltings as maximizers}

We start by introducing a function $f_X$, which allows us to charaterize critical tiltings.

\begin{Definition}
\label{def:fx}
For any vector $X\in(0,+\infty)^K$ with positive coefficients, we define the function $f_X:\R^K\rightarrow \R$ as:
\[
f_X(\btheta):=\sum_i X_i \big(\theta_i-\log\big(\phi^{(i)}\big(e^{\btheta}\big)\big)\big)=-X^{\intercal}\chi(\btheta).
\]
\end{Definition}

We will show a link between, on the one hand, critical parameters and the associated asymptotic direction and, on the other hand, the functions $f_X$ and their maximizers.
This will also show that, for any $\btheta$ such that $\bmu_{\btheta}$ is critical, $\chi(\btheta)$ lies in the boundary of $\mathcal{C}_{image}$ and that the asymptotic direction $X_{\btheta}$ is normal to the boundary of $\mathcal{C}_{image}$ at $\chi(\btheta)$.
\begin{Lemma}
\label{lem:resultonc1}
Let $\bmu$ be entire and let $X\in(0,+\infty)^K$ be a vector with positive coefficients. Then, the following holds:
\begin{itemize}
    \item[(i)] The function $f_{X}$ is concave. Furthermore, if $\bmu$ is nonlocalized, then $f_{X}$ is strictly concave (in the sense that its Hessian matrix is negative definite).
\item[(ii)]
Take $\btheta\in\R^K$, and assume that $\bmu_{\btheta}$ is critical. Let $X_{\btheta}$ be the asymptotic direction associated to $\bmu_\btheta$. Then, $f_{X_{\btheta}}$ is maximal at $\btheta$. Furthermore, if $\bmu$ iis the s nonlocalized, this maximizer is unique.
\end{itemize}
\end{Lemma}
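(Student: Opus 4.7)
The plan is to reduce both statements to standard convexity properties of log-moment generating functions. I would set
\[
L^{(i)}(\btheta) := \log \phi^{(i)}(e^{\btheta}) = \log \E_{\mu^{(i)}}\!\left[e^{\btheta \cdot \mathbf{k}}\right],
\]
so that $\chi_i(\btheta) = L^{(i)}(\btheta) - \theta_i$ and $f_X(\btheta) = \sum_i X_i \theta_i - \sum_i X_i L^{(i)}(\btheta)$. Since $\bmu$ is entire, each $L^{(i)}$ is smooth on all of $\R^K$, and a direct differentiation yields
\[
\nabla L^{(i)}(\btheta) = \E_{\mu_\btheta^{(i)}}\!\left[\mathbf{k}\right], \qquad \mathrm{Hess}\, L^{(i)}(\btheta) = \mathrm{Cov}_{\mu_\btheta^{(i)}}(\mathbf{k}),
\]
identifying the first two derivatives as the mean vector and covariance matrix of $\mathbf{k}$ under the tilted law $\mu_\btheta^{(i)}$. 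Each such Hessian is positive semidefinite, so $\sum_i X_i L^{(i)}$ is convex (as the $X_i>0$) and $f_X$ is concave, proving the first half of (i).

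For strict concavity, the Hessian of $f_X$ is $-\sum_i X_i \mathrm{Cov}_{\mu_\btheta^{(i)}}(\mathbf{k})$, and for any nonzero $Y\in\R^K$,
\[
Y^\intercal \mathrm{Cov}_{\mu_\btheta^{(i)}}(\mathbf{k}) Y = \mathrm{Var}_{\mu_\btheta^{(i)}}(Y \cdot \mathbf{k}),
\]
which is positive exactly when $Y\cdot\mathbf{k}$ is nondeterministic under $\mu_\btheta^{(i)}$. Because tilting by a positive exponential preserves the support of $\mu^{(i)}$, this is equivalent to nondeterminism under $\mu^{(i)}$ itself, so the nonlocalized hypothesis supplies at least one such $i$. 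Weighting by the positive $X_i$ then gives strict negative definiteness of the Hessian at every $\btheta$, which is (i).

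For (ii), the relation $\partial_j L^{(i)}(\btheta) = (M_\btheta)_{i,j}$ shows that the Jacobian of $\chi$ equals $M_\btheta - I_K$, hence
\[
\nabla f_X(\btheta) = -(M_\btheta - I_K)^\intercal X = (I_K - M_\btheta)^\intercal X.
\]
When $\bmu_\btheta$ is critical and $X = X_\btheta$, Definition \ref{def:asymdir} gives $X_\btheta^\intercal (M_\btheta - I_K) = \bzero$, so the gradient vanishes at $\btheta$. By the concavity proved in (i), $\btheta$ is a global maximizer of $f_{X_\btheta}$, and strict concavity under the nonlocalization hypothesis upgrades this to uniqueness. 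The only delicate point in the whole argument is precisely the observation that nonlocalization is stable under tilting, because the supports of $\mu^{(i)}$ and $\mu_\btheta^{(i)}$ coincide; all remaining steps are standard exponential-family identities combined with the Perron-Frobenius defining property of $X_\btheta$.
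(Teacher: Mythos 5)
Your proof is correct and follows essentially the same route as the paper: identify the Hessian of $-f_X$ as a positive combination of covariance matrices of the tilted laws (hence concavity, strict under nonlocalization), and observe that the gradient $(I_K-M_\btheta)^\intercal X_\btheta$ vanishes at a critical $\btheta$ by the Perron--Frobenius left-eigenvector property, so concavity gives the (unique, under nonlocalization) global maximum. Your explicit remark that nonlocalization is preserved under tilting because supports are unchanged is the same point the paper handles with its parenthetical "(and therefore $\bmu_{\btheta}$)", only spelled out more carefully.
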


\begin{proof}[Proof of Lemma \ref{lem:resultonc1}]
To simplify notations, we will write for any $i\in[K]$, $g^{(i)}(\btheta):=\log(\phi^{(i)}(e^{\btheta}))$. 
To prove (i), it suffices to compute the Hessian $H_{\btheta}$ of the function $f_X$ at $\btheta \in \R^K$. We have:
\[
\forall i,j \in [K], H_{\btheta}(i,j)= -\sum_{k\in [K]} X_k \frac{\partial^2}{\partial \theta_i \partial \theta_j} g^{(k)} (\btheta).
\]
In particular, for any vector $Z:=(Z_1, \ldots, Z_K)\in\R^K$, we have:
\[
Z^{\intercal}H_{\btheta}Z= - \sum_{k\in [K]} X_k \sum_{1\leq i,j \leq K} Z_i Z_j \frac{\partial^2}{\partial \theta_i \partial \theta_j} g^{(k)}(\btheta).
\]
Notice that $V_k:=\sum_{1\leq i,j \leq K} Z_i Z_j \frac{\partial^2}{\partial \theta_i \partial \theta_j} g^{(k)}(\btheta)$ is just the variance of $Z\cdot \mathbf{L}$ under $\mu^{(k)}_{\btheta}$ where $\mathbf{L}:=(L_1,\dots,L_K)$ is the number of children of each type under $\mu_{\btheta}^{(k)}$.

By convexity of $g^{(k)}$, $V_k\geq 0$ for all $k$, so $f_X$ is always concave. In addition, when $\bmu$ (and therefore $\bmu_{\btheta}$) is nonlocalized, there exists $k\in[K]$ such that $V_k>0$. Hence, the Hessian matrix of $f_X$ is always negative definite and thus $f_X$ is strictly concave.\\

Let us now prove (ii). For any $\btheta,Y \in \R^K$, all $X \in (0,+\infty)^K$, we have:
\begin{equation}
\label{eq:maximizer}
\sum_{i \in [K]} Y_i\frac{\partial}{\partial \theta_i } f_{X}(\btheta) 
=\sum_{i,j \in [K]} Y_i X_j\left(1_{i=j}-\frac{\partial}{\partial \theta_i}g^{(j)}(\btheta)\right)
=X^{\intercal}(I_K-M_{\btheta})Y
\end{equation}
Assume that $\btheta$ is critical. Specifying \eqref{eq:maximizer} at $X_\btheta$, we get that the gradient of $f_{X_{\btheta}}$ at $\btheta$ is $\bzero$. Therefore, $\btheta$ is a global maximizer of $f_X$ as the function is concave. Furthermore, if $\bmu$ is nonlocalized, the function $f_{X_{\btheta}}$ is strictly concave and $\btheta$ is therefore the unique maximizer. 
\end{proof}

\subsection{$\cM_{crit}$ is non-empty}
\label{ssec:nonempty}

We now prove that the set $\cM_{crit}$ of critical tiltings is nonempty.

\begin{Proposition}
\label{prop:Mcrit is nonempty}
For any projection $\bmu$ which is entire, finite, nondegenerate and irreducible, the set $\cM_{crit}$ is nonempty.
\end{Proposition}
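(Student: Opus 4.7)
I would proceed by a case analysis on the spectral radius $\rho:=\rho_{\bzero}$ and then invoke the intermediate-value theorem. First, the map $\btheta\mapsto M_\btheta$ is continuous on $\R^K$ (clear from the formula $(M_\btheta)_{ij}=e^{\theta_j}\partial_j \phi^{(i)}(e^\btheta)/\phi^{(i)}(e^\btheta)$, valid for every $\btheta\in\R^K$ since $\bmu$ is entire), and since exponential tilting preserves the supports, $M_\btheta$ remains irreducible with the same sparsity pattern; continuity of the Perron eigenvalue then makes $\btheta\mapsto \rho_\btheta$ a continuous map $\R^K\to(0,\infty)$. Consequently, it is enough to exhibit $\btheta_-,\btheta_+\in\R^K$ with $\rho_{\btheta_-}<1<\rho_{\btheta_+}$, the conclusion following from the IVT along the segment $[\btheta_-,\btheta_+]$. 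If $\rho=1$, then $\bzero\in \cM_{crit}$ and we are done.

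If $\bmu$ is supercritical ($\rho>1$), I take $\btheta_+:=\bzero$ and construct $\btheta_-$ from the extinction probabilities. By the finite assumption and standard theory, the vector $\vec q=(q_1,\ldots,q_K)$ satisfying $\phi^{(i)}(\vec q)=q_i$ lies in $(0,1)^K$ strictly (strictness by supercriticality plus irreducibility), so $\btheta_-:=(\log q_1,\ldots,\log q_K)$ is a well-defined element of $\R^K$. A direct computation identifies $\bmu_{\btheta_-}$ as the law of $\bmu$ conditioned on extinction, and the latter is classically strictly subcritical in the supercritical irreducible regime; this yields $\rho_{\btheta_-}<1$.

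If $\bmu$ is subcritical ($\rho<1$), I take $\btheta_-:=\bzero$ and must produce $\btheta_+$ with $\rho_{\btheta_+}>1$. Nondegeneracy supplies $i^*\in [K]$ and $\bk^*\in\mathrm{supp}(\mu^{(i^*)})$ with $|\bk^*|\geq 2$, and irreducibility forces $\mathrm{supp}(\mu^{(i)})\neq\{\bzero\}$ for every $i$ (otherwise type $i$ would be unable to reach any other type). I would then pick Pareto-maximal vectors $\bk^{(i)}\in\mathrm{supp}(\mu^{(i)})\setminus\{\bzero\}$ with $\bk^{(i^*)}=\bk^*$, together with a direction $\mathbf u\in(0,\infty)^K$ such that each $\bk^{(i)}$ uniquely maximises $\bk\mapsto \mathbf u\cdot \bk$ over $\mathrm{supp}(\mu^{(i)})$ (possible by Pareto-maximality and the entireness of $\bmu$). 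For $\btheta_+:=t\mathbf u$ with $t$ large, each $\mu^{(i)}_{\btheta_+}$ concentrates on $\bk^{(i)}$, so that $M_{\btheta_+}\to M_\infty$ whose $i$-th row equals $\bk^{(i)}$. The Collatz--Wielandt criterion applied to a positive test vector $v$ chosen so that $v\cdot \bk^{(i)}\geq v_i$ for every $i$ with strict inequality at $i^*$ (which is possible because $|\bk^{(i^*)}|\geq 2$ and $|\bk^{(i)}|\geq 1$ for $i\neq i^*$) yields $M_{\btheta_+}v\geq v$ componentwise with strict inequality in at least one coordinate; by irreducibility of $M_{\btheta_+}$ and Perron--Frobenius this gives $\rho_{\btheta_+}>1$.

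The main obstacle is the subcritical case: nondegeneracy only provides a ``fertility boost'' at one type $i^*$, and one must synchronise it with every other type through a single tilting. The integer structure of the supports, combined with irreducibility (every type produces at least one non-trivial offspring), is precisely what allows a common positive test vector $v$ to be constructed in the Collatz--Wielandt step; making this choice rigorously in full generality (including ensuring that the Pareto-maximal $\bk^{(i)}$ can be made compatible with a single direction $\mathbf u$) is the genuine technical content of the proof.
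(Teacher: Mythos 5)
Your overall route is the same as the paper's: produce one tilting with $\rho_{\btheta}\leq 1$ via the extinction probabilities (your supercritical case is exactly the paper's Lemma \ref{lem:fromsupercrittosubcrit}), produce one tilting with $\rho_{\btheta}\geq 1$ by pushing $\btheta$ far in a positive direction, and conclude by continuity of $\btheta\mapsto\rho_\btheta$ and the intermediate value theorem. The genuine gap is in your subcritical case, and it is twofold. First, your construction presupposes that each $\mathrm{supp}(\mu^{(i)})$ admits a maximizer of $\bk\mapsto\mathbf{u}\cdot\bk$: entireness of the generating functions does \emph{not} imply bounded supports (a Poisson offspring law is entire), and for unbounded support there is no Pareto-maximal $\bk^{(i)}$, no concentration of $\mu^{(i)}_{t\mathbf{u}}$ on a single point, and no limit matrix $M_\infty$, so the whole scheme collapses precisely in a case the proposition must cover.

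Second, even with bounded supports the Collatz--Wielandt step is not justified at finite $t$. Your inequality $v\cdot\bk^{(i)}\geq v_i$ (with $v=\bone$, say) only controls the \emph{limit} matrix; for a type $i$ whose selected offspring vector has size $1$, or which can have $0$ children, the row of $M_{t\mathbf{u}}$ approaches its limit from below, so $M_{t\mathbf{u}}v\geq v$ can fail in that coordinate for every finite $t$, and Collatz--Wielandt from below requires the inequality in all coordinates. Worse, $\rho(M_\infty)$ itself need not exceed $1$ for an arbitrary admissible choice of $\mathbf{u}$: with $K=2$, $\mathrm{supp}(\mu^{(1)})=\{\bzero,(0,2)\}$ and $\mathrm{supp}(\mu^{(2)})=\{\bzero,(1,0),(0,1)\}$, a direction with $u_2>u_1$ selects the rows $(0,2)$ and $(0,1)$, whose matrix has spectral radius exactly $1$ (and is not even irreducible), so no conclusion about $\rho_{t\mathbf{u}}>1$, or even $\geq 1$, follows. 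The paper's Lemma \ref{lem:supercritical} is designed exactly to get around both problems: it tilts along $\bone$, never passes to a limit of the mean matrix, and replaces your test vector by $Z_s$, the vector of probabilities of containing a vertex whose type can have at least two children. For types with at most one child the inequality $(M_{s\bone}Z_s)(i)\geq Z_s(i)$ then holds as an exact identity, while for the ``fertile'' types one gets a quantitative margin ($Z_s\to\bone$ and mean at least $3/2$), which yields $\rho_{s\bone}\geq 1$ for $s$ large without any boundedness of the supports. Supplying an argument of this kind is the missing content of your sketch.
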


To prove that there exists $\btheta \in \R^K$ such that $\bmu_\btheta$ is critical, by continuity of $\btheta \mapsto \rho_\btheta$ it is enough to prove that there exist $\btheta_1, \btheta_2$ such that $\bmu_{\btheta_1}$ is subcritical and $\bmu_{\btheta_2}$ is supercritical. It turns out that we can prove a stronger result, namely, that for any $\bmu$ finite, one can always find $\btheta \in \R^K$ such that the measure $\bmu_\btheta$ is subcritical is $\Gamma$-equivalent to $\bmu$, jointly for all $\Gamma \in \cup_{\ell \geq 1} \cM^*_{\ell,K}(\Z)$. In particular this holds for any finite supercritical $\bmu$.
\begin{Lemma}
\label{lem:fromsupercrittosubcrit}
Let $\bmu$ be finite. For $i \in [K]$, let $p_i$ be the probability under $\bmu$ that the tree $\cT^{(i)}$ is finite. Since $\bmu$ is finite, $p_i \in (0,1]$ for all $i \in [K]$. Define $\mathbf{q} := (\log (p_1), \ldots, \log (p_K))$. Then the measure $\bmu_{\mathbf{q}}$ is subcritical. Furthermore, $\chi(\bzero)=\chi(\mathbf{q})$, so that $\bmu_{\mathbf{q}}$ is $\Gamma$-equivalent to $\bmu$ for any $\Gamma$. If one assumes in addition that $\bmu$ is nonlocalized, then the measure $\bmu_{\mathbf{q}}$ is critical if and only if $\bmu$ is critical, in which case $\mathbf{q}=\bzero$.
\end{Lemma}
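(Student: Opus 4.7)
The plan has three distinct pieces corresponding to the three assertions, the middle one being the key computation that tees up the other two.

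First I would verify the identity $\chi(\mathbf{q})=\chi(\bzero)=\bzero$. This is essentially just a rewriting of the classical multitype extinction equation: by decomposing finiteness of $\cT^{(i)}$ according to the offspring of its root and invoking independence of the subtrees attached to those offspring, the vector $p:=(p_1,\dots,p_K)$ satisfies $p_i=\phi^{(i)}(p_1,\dots,p_K)$ for every $i\in[K]$. Writing $p_i=e^{q_i}$, this reads $\phi^{(i)}(e^{\mathbf{q}})=e^{q_i}$, so $\chi_i(\mathbf{q})=\log \phi^{(i)}(e^{\mathbf{q}})-q_i=0$; and $\chi_i(\bzero)=\log \phi^{(i)}(\bone)=0$ trivially. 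Hence $\chi(\mathbf{q})-\chi(\bzero)=\bzero\in(\ker\Gamma)^\perp$ for every $\Gamma$, giving the middle assertion.

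Next I would prove that $\bmu_{\mathbf{q}}$ is subcritical by showing that $\cT^{\mathbf{q},(i)}$ is almost surely finite for every root type $i$. Let $\tilde{p}_i$ be the extinction probability of $\cT^{\mathbf{q},(i)}$. From $\phi^{(i)}_{\mathbf{q}}(x)=\phi^{(i)}(x_1 p_1,\dots,x_K p_K)/p_i$, its fixed-point equation reads $\tilde{p}_i p_i=\phi^{(i)}(\tilde{p}_1 p_1,\dots,\tilde{p}_K p_K)$. Thus $(\tilde{p}_i p_i)_i\in[0,1]^K$ is a fixed point of $x\mapsto (\phi^{(i)}(x))_i$; since $p$ is the \emph{minimal} nonnegative fixed point we get $\tilde{p}_i p_i\geq p_i$, so $\tilde{p}_i\geq 1$ and hence $\tilde{p}_i=1$. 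It remains to translate a.s.\ finiteness into $\rho(M_{\mathbf{q}})\leq 1$. For this I would use the differentiated fixed-point relation: a direct computation gives $M_{\mathbf{q}}=D^{-1}A D$ with $D=\mathrm{diag}(p)$ and $A_{ij}=\partial_j\phi^{(i)}(p)$, so $\rho(M_{\mathbf{q}})=\rho(A)$; then convexity of each $\phi^{(i)}$ on $[0,1]^K$ yields $\bone-p=\Phi(\bone)-\Phi(p)\geq A(\bone-p)$, and applied to a nonnegative Perron eigenvector of $A^\intercal$ (which exists for any nonnegative matrix) this forces $\rho(A)\leq 1$, hence $\bmu_{\mathbf{q}}$ subcritical.

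Finally, under nonlocalization, I would obtain the equivalence and $\mathbf{q}=\bzero$ in a single stroke using Lemma \ref{lem:resultonc1}(ii). If $\bmu$ is critical, Lemma \ref{lem:resultonc1}(ii) applied at $\btheta=\bzero$ says that $\bzero$ is the \emph{unique} maximizer of the strictly concave function $f_{X_{\bzero}}$, with maximum value $-X_{\bzero}\cdot\chi(\bzero)=0$; but $\chi(\mathbf{q})=\chi(\bzero)$ gives $f_{X_{\bzero}}(\mathbf{q})=0$ as well, so uniqueness forces $\mathbf{q}=\bzero$ and then $\bmu_{\mathbf{q}}=\bmu$ is critical. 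Conversely, if $\bmu_{\mathbf{q}}$ is critical, the same lemma applied at $\btheta=\mathbf{q}$ makes $\mathbf{q}$ the unique maximizer of $f_{X_{\mathbf{q}}}$, and $\chi(\bzero)=\chi(\mathbf{q})$ forces $\bzero$ to also maximize, so $\bzero=\mathbf{q}$ and $\bmu=\bmu_{\mathbf{q}}$ is critical. The main technical obstacle is the passage in the second paragraph from a.s.\ finiteness of the tilted tree to $\rho(M_{\mathbf{q}})\leq 1$ without irreducibility assumptions; the argument via the inequality $\bone-p\geq A(\bone-p)$ and a nonnegative Perron eigenvector of $A^\intercal$ is the cleanest route I see, and is where the convex-geometric structure of the generating functions really enters.
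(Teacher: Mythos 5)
Your overall structure mirrors the paper's: the same fixed-point identity for the first part, an extinction-probability argument for the second, and an appeal to Lemma~\ref{lem:resultonc1}~(ii) for the third. Parts one and three are essentially identical to the paper's, and your explicit two-direction treatment of the last part fills in what the paper leaves at ``the result follows''. Your derivation that $\tilde p_i = 1$ via the minimality of $\mathbf{p}$ as a fixed point of $\Phi$ is clean and is, in substance, what the paper's citation of Harris accomplishes.

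The one place you genuinely depart from the paper is the passage from $\tilde p_i = 1$ for all $i$ to $\rho(M_{\mathbf{q}}) \leq 1$. The paper simply invokes \cite[Theorem 7.1]{Har63}, whereas you attempt a self-contained argument: the similarity $M_{\mathbf{q}} = D^{-1} A D$ (correct), the convexity inequality $\bone - p \geq A(\bone - p)$ (correct), and then pairing with a nonnegative left Perron eigenvector $v$ of $A$. This last step has a gap: the resulting inequality is $(1-\rho(A))\, v^{\intercal}(\bone - p) \geq 0$, which forces $\rho(A) \leq 1$ only if $v^{\intercal}(\bone - p) > 0$. When $p = \bone$ (i.e.\ $\mathbf{q}=\bzero$, always the case for subcritical or critical irreducible $\bmu$) the vector $\bone-p$ vanishes and you learn nothing; and in the reducible case $v$ may be supported exactly on $\{i : p_i = 1\}$ even when $p \neq \bone$. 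Under irreducibility this can be repaired: either $p = \bone$ (in which case one still needs Harris, or a separate argument, to conclude $\rho(M) \leq 1$) or $p < \bone$ componentwise and $v$ has all positive entries, so the argument closes. Since the lemma as stated does not assume irreducibility, and the paper's own appeal to Harris implicitly uses positive regularity, both proofs have the same latent hypothesis; but your self-contained route makes the need for it visible rather than hiding it in a citation, which is something you should flag explicitly rather than leave as ``the cleanest route I see''.
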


\begin{proof}
Splitting according to the children of the root of $\cT^{(i)}$, we get that
\begin{align*}
p_i &= \sum_{(k_1, \ldots, k_K) \in \Z_+^K} \mu^{(i)}(k_1, \ldots, k_K) \prod_{j=1}^K p_j^{k_j}\\
&= \phi^{(i)}\left(p_1, \ldots, p_K \right).
\end{align*}
Hence, we have $\chi(\mathbf{q}) = \bzero = \chi(\bzero) $ (recall Definition \ref{def:eqtilt}), and it follows that $\bmu_{\mathbf{q}} \sim_\Gamma \bmu$ for all $\Gamma$. 

We now prove that $\bmu_{\mathbf{q}}$ is subcritical. By \cite[Theorem $7.1$]{Har63}, it is equivalent to showing that $r_i=1$ for all $i \in [K]$, where $r_i = \Prob(\cT^{\mathbf{q},(i)} \text{ is finite})$. Observe that, by definition, $\mathbf{r} := (r_1, \ldots, r_K)$ is a solution of the equation
\begin{align*}
 \left\{
    \begin{array}{ccc}
        r_{1} & = & \phi_{\mathbf{q}}^{(1)}(\mathbf{r}), \\
        & \vdots & \\
        r_K & = & \phi_{\mathbf{q}}^{(K)}(\mathbf{r}).
    \end{array}
\right.
\end{align*}


Clearly, $(1, \ldots, 1)$ is a solution of this equation. Furthermore, by e.g. \cite[Corollary $1$ of Theorem $7.2$ of Chapter II]{Har63}, it is the only solution in the unit cube. 
Finally, we need to understand when $\bmu_{\mathbf{q}}$ is critical. By Lemma \ref{lem:resultonc1} (ii), if $\bmu$ is nonlocalized and $\mathbf{q}\not=\bzero$, then $\bmu_{\mathbf{q}}$ cannot be critical as $\bzero$ would be another maximizer of $f_{X_{\mathbf{q}}}$. The result follows.
\end{proof}

Let us immediately mention the following corollary, which can also be found in \cite{Pen16}.

\begin{Corollary}
\label{cor:fromsupercrittosubcrit}
    If $rk(\Gamma)=K$, then any supercritical and finite measure $\bmu$ has a critical $\Gamma$-equivalent tilting.
\end{Corollary}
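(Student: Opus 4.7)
The plan is to observe that the rank-$K$ hypothesis on $\Gamma$ collapses the notion of $\Gamma$-equivalence, so that we only have to exhibit \emph{some} critical tilting of $\bmu$; this will follow by combining Lemma \ref{lem:fromsupercrittosubcrit} with a one-parameter intermediate-value argument.

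First I would note that if $\Gamma \in \cM^*_{\ell,K}(\Z)$ has rank $K$, then the linear map $\Gamma : \R^K \to \R^\ell$ has trivial kernel, so $(\ker \Gamma)^\perp = \R^K$. Hence any two elements $\btheta, \btheta' \in \R^K$ trivially satisfy $\chi(\btheta) - \chi(\btheta') \in (\ker \Gamma)^\perp$, i.e.\ are $\Gamma$-equivalent in the sense of Definition \ref{def:eqtilt}. Thus any critical tilting $\bmu_{\btheta}$ that I can produce will automatically be $\Gamma$-equivalent to $\bmu = \bmu_{\bzero}$, and the corollary reduces to the existence of a critical tilting.

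Next, set $\mathbf{q} := (\log p_1, \ldots, \log p_K)$, where $p_i \in (0,1]$ is the extinction probability of $\cT^{(i)}$; this is well-defined because $\bmu$ is finite. Lemma \ref{lem:fromsupercrittosubcrit} tells us that $\bmu_{\mathbf{q}}$ is subcritical, i.e.\ $\rho_{\mathbf{q}} \leq 1$, while supercriticality of $\bmu$ gives $\rho_{\bzero} = \rho(M) > 1$. Along the segment $t\mathbf{q}$ for $t \in [0,1]$ we have $t\mathbf{q} \leq \bzero$ componentwise, so $e^{t\mathbf{q}} \leq \bone$ and the entries of the mean matrix $M_{t\mathbf{q}}$ remain dominated by those of $M$ (which are finite since $\rho(M)$ is). This gives continuity of $t \mapsto M_{t\mathbf{q}}$, and hence of $t \mapsto \rho_{t\mathbf{q}}$. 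By the intermediate value theorem there exists $t^* \in (0,1]$ such that $\rho_{t^*\mathbf{q}} = 1$, so $\bmu_{t^*\mathbf{q}}$ is critical and, by the first paragraph, $\Gamma$-equivalent to $\bmu$.

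The only subtlety I anticipate is ensuring the continuity of $\btheta \mapsto \rho_{\btheta}$ on the chosen path without assuming $\bmu$ is entire; this is precisely why I pick the segment inside the nonpositive orthant, where the series defining $\phi^{(i)}$ and its first derivatives are automatically convergent and bounded by their values at $\bzero$.
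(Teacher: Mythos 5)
Your proposal is correct and follows essentially the same route as the paper: both take $\mathbf{q}$ from Lemma \ref{lem:fromsupercrittosubcrit}, run an intermediate-value argument along the segment $\{t\mathbf{q} : t\in[0,1]\}$ (which lies in the nonpositive orthant, so no entireness is needed), and observe that $\mathrm{rk}(\Gamma)=K$ forces $(\ker\Gamma)^\perp = \R^K$, making every tilting $\Gamma$-equivalent to $\bmu$. The only difference is that you spell out the domination argument for continuity of $t\mapsto\rho_{t\mathbf{q}}$ rather than leaving it as a parenthetical remark, which is a welcome clarification but not a different proof.
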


Note that no assumption on the moments of $\bmu$ is needed.

\begin{proof}
Define $\mathbf{q}$ as in Lemma \ref{lem:fromsupercrittosubcrit}. By Lemma \ref{lem:fromsupercrittosubcrit} and continuity of $\btheta \mapsto \rho_\btheta$, there exists $\lambda \in [0,1]$ such that $\bmu_{\lambda \mathbf{q}}$ is critical (observe that $\bmu_{\lambda \mathbf{q}}$ is well defined, as $\lambda \in [0,1]$). To conclude, observe that, if $rk(\Gamma)=K$, then $(\ker \, \Gamma)^\perp = \R^K$, and thus for any $\btheta \in \R$, $\btheta \sim_\Gamma \bzero$. The result follows.
\end{proof}

\bigskip

In the case where $\bmu$ is subcritical, we need to be more careful. Indeed, even in the monotype case there are examples where $\bmu$ has no supercritical $\Gamma$-equivalent measure, and in particular no critical one, see e.g. \cite[Remark 4.3]{Jan12}. Hence, we need to assume here that $\bmu$ is entire.

Let $\textbf{1}=(1,\dots,1)$ and, for $s \in \R$, $s\mathbf{1} = (s, \ldots, s)$. We will show that, for any projection $\bmu$ which is entire, for $s$ large enough, $\bmu_{s\textbf{1}}$ is supercritical.

\begin{Lemma}\label{lem:supercritical}
Assume that $\bmu$ is entire, nondegenerate and irreducible. Then, for $s$ large enough, $\bmu_{s\mathbf{1}}$ is supercritical.
\end{Lemma}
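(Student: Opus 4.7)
The plan is to analyse the asymptotics of the mean matrix $M_{s\bone}$ as $s\to+\infty$ and show that its spectral radius exceeds $1$ eventually. For each $i\in[K]$, set $\alpha_i:=\sup\{|\mathbf{k}|:\mu^{(i)}(\mathbf{k})>0\}\in\N\cup\{+\infty\}$, where $|\mathbf{k}|:=\sum_{j}k_j$; irreducibility of $\bmu$ forces $\alpha_i\geq 1$ for every $i$ (otherwise row $i$ of $M$ would vanish), and nondegeneracy yields some $i_0$ with $\alpha_{i_0}\geq 2$. The function $g^{(i)}(s):=\log\phi^{(i)}(e^s,\ldots,e^s)$ is convex on $\R$ with derivative equal to the $i$-th row sum $n_i(s):=(M_{s\bone}\bone)_i$, and a short calculation using entireness of $\phi^{(i)}$ gives $g^{(i)}(s)/s\to\alpha_i$, so $n_i(s)\nearrow\alpha_i$ as $s\to+\infty$.

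Assume first that every $\alpha_i$ is finite, so the $\phi^{(i)}$ are polynomials. Then the tilted law $\mu^{(i)}_{s\bone}$ concentrates on $S_i:=\{\mathbf{k}\in\mathrm{supp}\,\mu^{(i)}:|\mathbf{k}|=\alpha_i\}$ and $M_{s\bone}$ converges entrywise to the matrix $M^*$ defined by $(M^*)_{i,j}:=\mu^{(i)}(S_i)^{-1}\sum_{\mathbf{k}\in S_i}\mu^{(i)}(\mathbf{k})\,k_j$, which satisfies $(M^*\bone)_i=\alpha_i$. By continuity of the spectral radius in the entries of the matrix it is enough to prove $\rho(M^*)>1$. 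Every directed graph admits at least one closed strongly connected component (SCC), so let $C$ be a closed SCC of the support graph of $M^*$; since $C$ is closed, the row sums of $M^*|_C$ coincide with the $\alpha_j$ for $j\in C$. If some $i\in C$ satisfies $\alpha_i\geq 2$, then $M^*|_C$ is irreducible with row sums $(\alpha_j)_{j\in C}$ all $\geq 1$ and not all equal, and the strict form of Collatz--Wielandt applied with test vector $\bone$ yields $\rho(M^*|_C)>\min_{j\in C}\alpha_j\geq 1$, hence $\rho(M^*)\geq\rho(M^*|_C)>1$. Otherwise every closed SCC of $M^*$ consists only of types $i$ with $\alpha_i=1$; for any such $i$, the support of $\mu^{(i)}$ is contained in $\{\bzero\}\cup\{\mathbf{k}\in\Z_+^K:|\mathbf{k}|=1\}$, so any $M$-transition $i\to l$ is witnessed by the unit vector $\mathbf{e}_l\in S_i$ and is therefore also an $M^*$-transition. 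Consequently, the union $T$ of closed SCCs of $M^*$ is invariant under $M$ itself, which contradicts irreducibility of $M$ because $T\neq\varnothing$ while $i_0\notin T$.

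If instead $\alpha_{i^*}=+\infty$ for some $i^*$, then $n_{i^*}(s)\to+\infty$, so along a subsequence $(s_n)$ there is a $j^*\in[K]$ with $(M_{s_n\bone})_{i^*,j^*}\geq n_{i^*}(s_n)/K\to+\infty$. Applying the finite-support analysis above to the truncation $\bar\bmu^{N}$ of $\bmu$ to $\{|\mathbf{k}|\leq N\}$ (finite, irreducible and nondegenerate for $N$ large enough) provides uniform positive lower bounds on the products of $M_{s\bone}$-entries along some path from $j^*$ back to $i^*$ in the common support graph; combined with the divergence of $(M_{s_n\bone})_{i^*,j^*}$ this yields $(M_{s_n\bone}^{N'})_{i^*,i^*}\to+\infty$ for a fixed integer $N'$, and hence $\rho(M_{s_n\bone})\geq((M_{s_n\bone}^{N'})_{i^*,i^*})^{1/N'}\to+\infty$. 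Since the same reasoning applies to any subsequence, $\rho(M_{s\bone})\to+\infty$ in this case and the lemma follows. The main obstacle is the proof that $\rho(M^*)>1$ in the finite-support case; the crucial new ingredient is the rigidity observation that $\alpha_i=1$ forces $\mathrm{supp}\,\mu^{(i)}\subset\{\bzero\}\cup\{\mathbf{k}:|\mathbf{k}|=1\}$, which is precisely what allows one to transfer $M^*$-invariance of $T$ into $M$-invariance and derive a contradiction with the irreducibility of $\bmu$.
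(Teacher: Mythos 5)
Your bounded-offspring case (all $\alpha_i<\infty$) is correct and follows a genuinely different route from the paper: you pass to the entrywise limit $M^*$ of the tilted mean matrices and get $\rho(M^*)>1$ from a closed strongly connected component together with the rigidity observation for types with $\alpha_i=1$. (One small slip there: the row sums of $M^*$ restricted to the closed component need not be ``not all equal''; if they are all equal they are all $\geq 2$, so the conclusion still holds, e.g.\ by applying the strict Collatz--Wielandt inequality with test value $1$ instead of $\min_j\alpha_j$.) The paper argues differently and more robustly: it works with the vector $Z_s$ of probabilities that the tree contains a vertex of a type admitting at least two children, and shows $M_{s\bone}Z_s\geq Z_s$ componentwise for $s$ large, which treats bounded and unbounded offspring simultaneously and never requires the tilted mean matrices to converge.

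The genuine gap is in your unbounded case ($\alpha_{i^*}=+\infty$). The claim that truncating $\bmu$ yields ``uniform positive lower bounds on the products of $M_{s\bone}$-entries along some path from $j^*$ back to $i^*$'' is unjustified and in fact false: the entries of $M_{s\bone}$ are normalized by $\E_{\mu^{(i)}}\big(e^{s|\mathbf{k}|}\big)$, which is not comparable to the truncated normalization, and an entry $(M_{s\bone})_{i,j}$ with $M_{i,j}>0$ can tend to $0$ when the tilted mass of $\mu^{(i)}$ escapes to offspring vectors containing no child of type $j$. Concretely, take $K=2$, $\mu^{(1)}(n\,\mathbf{e}_2)\propto e^{-n^2}$ for $n\geq 0$, and $\mu^{(2)}(\bzero)=p_0$, $\mu^{(2)}(\mathbf{e}_1)=p_1$, $\mu^{(2)}(2\,\mathbf{e}_2)=p_2$ with $p_0,p_1,p_2>0$. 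This $\bmu$ is entire, nondegenerate and irreducible, with $\alpha_1=+\infty$. One computes $(M_{s\bone})_{1,2}\sim s/2$, $(M_{s\bone})_{1,1}=0$, $(M_{s\bone})_{2,2}\to 2$, but $(M_{s\bone})_{2,1}\sim (p_1/p_2)e^{-s}$; every return path to type $1$ must use the edge $2\to 1$, so the product of entries along any such path tends to $0$, $(M_{s\bone}^{N'})_{1,1}\to 0$ for every fixed $N'$, and $\rho(M_{s\bone})\to 2$, not $+\infty$. Moreover the truncated measures do not help: for any truncation level, the limiting matrix of the truncated tilting also has vanishing $(2,1)$-entry, so no ``common support graph'' lower bound is available. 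Thus both your intermediate claim and the asserted conclusion $\rho(M_{s\bone})\to+\infty$ fail in general (the lemma itself of course still holds in this example, since $\rho\to 2>1$, but not by your argument), and the unbounded case needs a different proof --- for instance the paper's argument via $Z_s$, which requires no limit of $M_{s\bone}$.
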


This immediately implies Proposition \ref{prop:Mcrit is nonempty}.

\begin{proof}[Proof of Proposition \ref{prop:Mcrit is nonempty}]
Proposition \ref{prop:Mcrit is nonempty} is an immediate consequence of Lemmas \ref{lem:fromsupercrittosubcrit} and \ref{lem:supercritical}, as well as the continuity of $\btheta \mapsto \rho_{\btheta}$.    
\end{proof}

\begin{proof}[Proof of Lemma \ref{lem:supercritical}]
By assumption, $\bmu$ is entire, so that $\bmu_{s\mathbf{1}}$ is well-defined for all $s>0$.
Let $I$ be the set of types that can have at most one child and $J$ the set of types that can have at least two children. Since $\bmu$ is nondegenerate, $J$ is non-empty. For all $s>0$, all $i \in [K]$, denote by $Z_s(i)$ the probability under $\bmu_{s\mathbf{1}}$ that $\cT^{s \mathbf{1},(i)}$ contains a vertex whose type is in $J$. In particular, $Z_s(i)>0$ for all $i \in [K]$ by irreducibility of $\bmu$, and $Z_s(j)=1$ for all $j \in J$.
Our goal is to show that for $s$ large enough, for any type $k \in [K]$, 
\begin{equation}
\label{eq:larger}
    (M_{s\mathbf{1}} \, Z_s)(k)\geq Z_s(k).
\end{equation} 
By the Perron-Frobenius theorem, this will imply that we are in the supercritical case.

Let us first show that, for $s$ large enough, a vertex with type $j \in J$ has on average at least $3/2$ children under $\bmu_{s\mathbf{1}}$. For all $j\in J$ and all $n \geq 0$, let $q_{j,s}(n)$ be the probability under $\mu^{(j)}_{s\mathbf{1}}$ that a node of type $j$ has $n$ children. For all $j,s$ there exists a renormalizing constant $C_{j,s}$ such that:
\[
q_{j,s}(n)=C_{j,s}q_{j,0}(n)e^{sn}.
\]

 As a consequence, the probability that a vertex of type $j \in J$ has $0$ or $1$ child satisfies:

\[
\begin{aligned}
q_{j,s}(0)+q_{j,s}(1)
=&\frac{q_{j,0}(0)+q_{j,0}(1)e^{s}}{\sum_n q_{j,0}(n)e^{sn}}\\
\leq & \frac{\big(q_{j,0}(0)+q_{j,0}(1)\big)e^{s}}{\sum_{n\geq 2} q_{j,0}(n)e^{2s}}\\
=& \frac{q_{j,0}(0)+q_{j,0}(1)}{(1-q_{j,0}(0)-q_{j,0}(1))}e^{-s} \underset{s \rightarrow \infty}{\rightarrow} 0.
\end{aligned}
\]

Hence, for $s$ large enough, the average number of children of a vertex of type $j$ is at least $3/2$.

Let us now prove \eqref{eq:larger}.
Assume first that $I=\emptyset$. Then, we have
\begin{equation*}
    \forall j\in J, \sum_{k \in [K]} M_{s\mathbf{1}}(j,k) Z_s(k) = \sum_{k \in [K]} M_{s\mathbf{1}}(j,k) \geq 3/2 \geq 1=Z_j,
\end{equation*}
and the result follows.

Now assume that $I \neq \emptyset$.  As elements with type in $I$ have at most one child, we have the following equality:
\begin{equation}
\label{eq:type1}
\forall i\in I, Z_s(i)=\sum_{k \in [K]} M_{s\mathbf{1}}(i,k) Z_s(k).    
\end{equation}

So for all $i \in I$ and for all $s$, $(M_{s\mathbf{1}} Z_s(i))=Z_s(i)$ and \eqref{eq:larger} holds.

We have that for any $i\in I$ and any $j \in [K]$:
\begin{equation}
\label{eq:moyennetype1}
M_{s\mathbf{1}}(i,j)=\frac{e^s M(i,j)}{\mu^{(i)}(\bzero)
+e^s \sum_{k \in [K]} M(i,k)}.
\end{equation}

Let $(Z_\infty(1), \ldots, Z_\infty(K))$ be any subsequential limit of $(Z_s(1), \ldots, Z_s(K))$. By \eqref{eq:type1} and \eqref{eq:moyennetype1}, we obtain that, for all $i \in I$:
\begin{equation*}
    Z_\infty(i) = \frac{\sum_{k=1}^K M(i,k) Z_\infty(k)}{\sum_{k=1}^K M(i,k)}.
\end{equation*}
In particular, by irreducibility of $M$, we get that $Z_\infty(i)=1$ for all $i \in I$. Thus, $(Z_s(1), \ldots, Z_s(K)) \rightarrow \mathbf{1}$ as $s \rightarrow \infty$. In particular, for $s$ large enough, $\inf_{i \in I} Z_s(i) \geq 3/4$. We now use the following fact. Since $Z_s(j')=1$ for all $j' \in J$, if $I \neq \emptyset$, we have:
\begin{equation}
\label{eqn:ineqmanychild}
    \forall j\in J, \sum_{k \in [K]} M_{s\mathbf{1}}(j,k) Z_s(k) \geq \big(\inf_{i\in I} Z_s(i)\big)\sum_{k \in [K]} M_{s\mathbf{1}}(j,k)
\end{equation}

Therefore, for $s$ large enough, for all $j \in J$, $(M_{s \mathbf{1}} Z_s)(j) \geq 3/4 \times 3/2 \geq 1 = Z_s(j)$ and \eqref{eq:larger} also holds for any $j \in J$. Hence, we have that $(M_{s\mathbf{1}}Z_s)(i)\geq Z_s(i)$ for all $i \in [K]$. Moreover, $M_{s\mathbf{1}}$ has nonnegative coefficients. Thus, we get that for all $n\in\N, \big(M_{s\mathbf{1}}^n Z_s \big)(i)\geq \big(M_{s\mathbf{1}} Z_s\big)(i)>0 $. This implies that $\rho_{s\mathbf{1}} \geq 1$ and therefore $\bmu_{s \mathbf{1}}$ is supercritical. 

\end{proof}

\subsection{$\cM_{crit}$ is a manifold}
\label{ssec:open}

We prove here that the set $\cM_{crit}$ has a manifold structure.

\begin{Proposition}\label{prop:open}
Let $\bmu$ be entire, finite, nondegenerate, nonlocalized and irreducible. Then, the set $\cM_{crit}$ is a differentiable manifold of dimension $K-1$.
\end{Proposition}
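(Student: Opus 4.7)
The plan is to apply the implicit function theorem to the spectral radius map $\rho : \btheta \mapsto \rho_\btheta$. Since $\cM_{crit} = \rho^{-1}(\{1\})$ is nonempty by Proposition \ref{prop:Mcrit is nonempty}, it will suffice to show that $\rho$ is smooth in a neighborhood of each $\btheta_0 \in \cM_{crit}$ and that $\nabla \rho(\btheta_0) \neq \bzero$.

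First I would establish smoothness near any fixed $\btheta_0 \in \cM_{crit}$. Exponential tilting leaves the support of $\bmu$ unchanged, so $M_{\btheta_0}$ has the same zero pattern as $M$ and $\bmu_{\btheta_0}$ is still irreducible; Perron-Frobenius then guarantees that $\rho_{\btheta_0} = 1$ is a simple eigenvalue of $M_{\btheta_0}$. Classical analytic perturbation theory for simple eigenvalues (equivalently, the implicit function theorem applied to $\det(\lambda I_K - M_\btheta) = 0$ at $(\lambda, \btheta) = (1, \btheta_0)$) then yields that $\rho$ and the associated left and right Perron eigenvectors $\ba, \bb$ of $M_{\btheta_0}$ (normalized so that $\ba^{\intercal}\bb = 1$) depend smoothly on $\btheta$ near $\btheta_0$.

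Next, standard first-order perturbation theory for simple eigenvalues gives
\[
\partial_{\theta_j}\rho(\btheta_0) = \ba^{\intercal}\bigl(\partial_{\theta_j} M_{\btheta_0}\bigr)\bb.
\]
Differentiating $(M_\btheta)_{i,k} = \E_{\mu^{(i)}_\btheta}[z_k]$ directly from the definition of the exponential tilting yields the routine identity $(\partial_{\theta_j} M_\btheta)_{i,k} = \mathrm{Cov}_{\mu^{(i)}_\btheta}(z_j, z_k)$, so that
\[
\partial_{\theta_j}\rho(\btheta_0) = \sum_{i \in [K]} a_i \, \mathrm{Cov}_{\mu^{(i)}_{\btheta_0}}\!\bigl(z_j,\, \mathbf{z}\cdot\bb\bigr).
\]

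The main step, where the nonlocalized hypothesis enters, is to rule out $\nabla\rho(\btheta_0) = \bzero$. Suppose for contradiction that it vanishes. Taking the linear combination of these partial derivatives with coefficients $b_j$ gives
\[
0 = \sum_{j \in [K]} b_j\,\partial_{\theta_j}\rho(\btheta_0) = \sum_{i \in [K]} a_i \,\mathrm{Var}_{\mu^{(i)}_{\btheta_0}}\!\bigl(\mathbf{z}\cdot\bb\bigr).
\]
Since every $a_i > 0$ by Perron-Frobenius, this forces $\mathrm{Var}_{\mu^{(i)}_{\btheta_0}}(\mathbf{z}\cdot\bb) = 0$ for each $i$, i.e.\ $\mathbf{z}\cdot\bb$ is a.s.\ constant under each $\mu^{(i)}_{\btheta_0}$. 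As tilting preserves supports, $\mathbf{z}\cdot\bb$ is also deterministic under each $\mu^{(i)}$, and since $\bb \in (0,+\infty)^K$ is nonzero this contradicts the nonlocalized hypothesis on $\bmu$. Thus $\nabla\rho(\btheta_0) \neq \bzero$, and the implicit function theorem produces a smooth $(K-1)$-dimensional chart of $\cM_{crit}$ around $\btheta_0$. The non-vanishing of the gradient is the only real obstacle; everything else is routine.
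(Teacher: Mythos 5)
Your proof is correct, but it takes a genuinely different route from the paper's. The paper never works with the spectral radius map directly: it first shows (Lemma \ref{lem:nul}) that near any critical $\btheta$ the set $\cM_{crit}$ coincides with $\{\det(M_{\btheta'}-I_K)=0\}$, and then proves (Lemma \ref{lem:invertible}) that the map $F(\btheta)=\big(\pi(\chi(\btheta)),\det(M_{\btheta}-I_K)\big)$, with $\pi$ a linear surjection killing $\R\bone$, is a local diffeomorphism at every critical point; invertibility of the Jacobian is obtained via Jacobi's formula, the rank-one structure of $\text{com}(M_{\btheta}-I_K)$, and the strict concavity of $f_{X_{\btheta}}$ from Lemma \ref{lem:resultonc1}. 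You instead exhibit $1$ as a regular value of $\btheta\mapsto\rho_{\btheta}$, computing $\nabla\rho$ by first-order perturbation of a simple eigenvalue together with the exponential-family identity $\partial_{\theta_j}(M_{\btheta})_{i,k}=\mathrm{Cov}_{\mu^{(i)}_{\btheta}}(z_j,z_k)$. The decisive positivity is the same in both arguments: your quantity $\sum_i a_i\,\mathrm{Var}_{\mu^{(i)}_{\btheta_0}}(\mathbf{z}\cdot\bb)$ is, up to positive normalizations, exactly the quadratic form $Y^{\intercal}H(-f_{X_{\btheta}})Y$ evaluated at the right Perron vector that closes the paper's proof, and the nonlocalized hypothesis is used identically (tilting preserves supports). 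What your packaging buys is economy: no Jacobi formula, no cofactor computation, and no need for Lemma \ref{lem:nul}, whose role is played by the continuity of the spectral radius plus simplicity of the Perron root, which identifies the smooth eigenvalue branch through $(1,\btheta_0)$ with $\rho_{\btheta}$ locally --- this identification is implicit in your appeal to perturbation theory and is worth one explicit sentence. What the paper's heavier map $F$ buys is a chart whose first $K-1$ coordinates are $\pi\circ\chi$, i.e.\ a local straightening of $\chi$ along $\cM_{crit}$, although in the sequel only the bare manifold structure is actually invoked, so your argument would suffice for the paper's purposes.
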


This is shown by exhibiting a local diffeomorphism (see Lemma \ref{lem:invertible}) around each element of $\cM_{crit}$. The next lemma allows us to simplify the study of critical matrices.

\begin{Lemma}
\label{lem:nul}
Let $\bmu$ be entire and irreducible. Let $\btheta \in \cM_{crit}$. Then, there exists $\varepsilon>0$ such that $$\cM_{crit} \cap B_\varepsilon(\btheta) = \left\{\btheta', \det(M_{\btheta'}-I_K)=0 \right\} \cap B_\varepsilon(\btheta).$$
\end{Lemma}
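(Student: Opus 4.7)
The inclusion $\cM_{crit}\cap B_\varepsilon(\btheta) \subseteq \{\btheta', \det(M_{\btheta'}-I_K)=0\}\cap B_\varepsilon(\btheta)$ holds trivially for every $\varepsilon>0$, since by definition $\btheta'\in\cM_{crit}$ means $\rho_{\btheta'}=1$ is an eigenvalue of $M_{\btheta'}$, so $\det(M_{\btheta'}-I_K)=0$. All the work lies in the reverse inclusion: I need to show that for $\btheta'$ close enough to $\btheta$, if $1$ is an eigenvalue of $M_{\btheta'}$, then it must be the Perron-Frobenius eigenvalue $\rho_{\btheta'}$ itself.

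The plan rests on two preliminary observations. First, the tilting operation multiplies each atom of $\mu^{(i)}$ by a strictly positive factor and renormalizes, so the support of $\mu^{(i)}_{\btheta'}$ coincides with that of $\mu^{(i)}$. In particular, the sign pattern of $M_{\btheta'}$, and hence its irreducibility together with its period $p$, does not depend on $\btheta'$. Second, since $\bmu$ is entire, the entries of $M_{\btheta'}$ (which equal $\partial_{\theta_j}\log\phi^{(i)}(e^{\btheta'})$) are real-analytic in $\btheta'$, so the coefficients of the characteristic polynomial $P_{\btheta'}(X):=\det(XI_K-M_{\btheta'})$ depend continuously on $\btheta'$.

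The core step is then a standard continuity argument on the roots of $P_{\btheta'}$. By the Perron-Frobenius theorem applied to the irreducible matrix $M_\btheta$, the peripheral spectrum of $M_\btheta$ is exactly $\{1,\omega,\ldots,\omega^{p-1}\}$ with $\omega:=e^{2i\pi/p}$, the eigenvalue $1$ is simple, and every other eigenvalue of $M_\btheta$ has modulus strictly less than $1$. Crucially, no eigenvalue of $M_\btheta$ other than $1$ itself is equal to $1$. By continuity of the roots of a polynomial as functions of its coefficients, I can then find $\delta>0$ and $\varepsilon>0$ such that, for every $\btheta'\in B_\varepsilon(\btheta)$, the polynomial $P_{\btheta'}$ has exactly one root in the complex disk $\{z\in\mathbb{C}:|z-1|<\delta\}$, this root is simple, and every other root of $P_{\btheta'}$ lies outside that disk. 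By continuity of the spectral radius this unique root is necessarily $\rho_{\btheta'}$. Hence if $P_{\btheta'}(1)=0$ for some $\btheta'\in B_\varepsilon(\btheta)$, then $1$ must be this unique root, i.e. $\rho_{\btheta'}=1$ and $\btheta'\in\cM_{crit}$.

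The main obstacle is ensuring the separation between $1$ and the remaining eigenvalues of $M_\btheta$ on the unit circle: this is exactly where irreducibility is used, since a general nonnegative matrix could have several eigenvalues of modulus $\rho$ all equal to $\rho$, whereas Perron-Frobenius guarantees here that on the peripheral spectrum of $M_\btheta$ only $1$ itself equals $1$. Once this separation is in place, the perturbation argument on the characteristic polynomial is routine.
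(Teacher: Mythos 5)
Your proof is correct and rests on the same essential ingredients as the paper's: simplicity of the eigenvalue $1$ of $M_{\btheta}$ (Perron--Frobenius for the irreducible critical matrix) together with continuity of the spectrum in $\btheta'$, available because $\bmu$ is entire. The paper packages this as a sequential contradiction argument (a sequence $\btheta^{(n)}\to\btheta$ with $\det(M_{\btheta^{(n)}}-I_K)=0$ but noncritical, whose Perron roots $z_n>1$ would in the limit force either non-criticality of $\btheta$ or an eigenvalue $1$ of multiplicity at least two), while you isolate the simple root $1$ of the characteristic polynomial directly and identify the unique nearby root with $\rho_{\btheta'}$ via continuity of the spectral radius; the underlying mechanism is the same.
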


\begin{proof}
Since $\bmu$ is entire, $\bmu_\btheta$ is well-defined for all $\btheta \in \R^K$. Fix $\btheta \in \cM_{crit}$. Assume that there exists a sequence $(\btheta^{(n)}, n \geq 1)$ converging to $\btheta$, such that, for all $n$, $\det(M_{\btheta^{(n)}}-I_K)=0$ but $\bmu_{\btheta^{(n)}}$ is noncritical. Since $1$ is an eigenvalue of $M_{\btheta^{(n)}}$, necessarily $\bmu_{\btheta^{(n)}}$ is supercritical (but noncritical). In particular, by the Perron-Frobenius theorem, for all $n$ the largest eigenvalue of $M_{\btheta^{(n)}}$ is a real number $z_n>1$. Up to extracting a subsequence, we can assume that $z_n$ converges to some $z \geq 1$. Indeed, $z_n \leq \sum_{i,j} \left|\left( M_{\btheta^{(n)}} \right)_{i,j} \right|$ which is a bounded continuous function of $\btheta$, so that $(z_n)_{n \geq 1}$ is bounded. In particular $z$ is an eigenvalue of $M_{\btheta}$ by continuity of the determinant. If $z>1$, then $\bmu_{\btheta}$ is noncritical. On the other hand, $z=1$ cannot occur since, in that case, $1$ would be an eigenvalue of multiplicity at least 2 of $M_{\btheta}$. This is impossible by the Perron-Frobenius theorem. The result follows.
\end{proof}

Now let $\pi$ be a linear map from $\R^K$ to $\R^{K-1}$ of kernel $\R \bone$.
We define the function $F: \R^K\rightarrow \R^K$ as follows:
\[
F(\btheta):=\left(\pi(\chi(\btheta)),\det(M_{\btheta}-I_K)\right),
\]
where the first $K-1$ coordinates of $F$ are $\pi(\chi(\btheta))$.

\begin{Lemma}
\label{lem:invertible}
Assume that $\bmu$ is entire, nonlocalized and irreducible. The function $F$ is a local diffeomorphism around any $\btheta \in \R^K$ such that $\bmu_{\btheta}$ is critical.  
\end{Lemma}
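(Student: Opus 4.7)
The plan is to use the inverse function theorem: since $\bmu$ is entire, $F$ is smooth, so it suffices to show that the Jacobian matrix $JF(\btheta)$ is invertible at any critical $\btheta$. The key computational input is that the Jacobian of $\chi$ at $\btheta$ equals $M_\btheta - I_K$ (this follows from $\partial_{\theta_j} \chi_i = \partial_{\theta_j} g^{(i)}(\btheta) - \delta_{i,j}$ with $\partial_{\theta_j} g^{(i)}(\btheta) = (M_\btheta)_{i,j}$, where $g^{(i)} = \log \phi^{(i)} \circ \exp$). At a critical $\btheta$, Perron--Frobenius (applied via irreducibility and nonnegativity) guarantees that $1$ is a simple eigenvalue of $M_\btheta$; consequently $\ker(M_\btheta - I_K) = \R \bb$ and $\mathrm{Im}(M_\btheta - I_K) = X_\btheta^\perp$, where $\bb$ and $X_\btheta$ are the right and left $1$-eigenvectors with strictly positive coordinates.

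The first $K-1$ rows of $JF(\btheta)$ form the matrix $\pi \circ (M_\btheta - I_K)$. I would first observe that its kernel is exactly $\R \bb$: since $\ker \pi = \R \bone$, the intersection $\ker \pi \cap \mathrm{Im}(M_\btheta - I_K) = \R \bone \cap X_\btheta^\perp$ is trivial because $\bone \cdot X_\btheta > 0$ (positivity of $X_\btheta$). Therefore $\pi$ is injective on the image of $M_\btheta - I_K$, so the kernel of the composition collapses to $\ker(M_\btheta - I_K) = \R \bb$. The invertibility of $JF(\btheta)$ thus reduces to showing that the last row, $\nabla_\btheta \det(M_\btheta - I_K)$, is not orthogonal to $\bb$.

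To handle this directional derivative, I would apply Jacobi's formula:
\begin{equation*}
    \frac{d}{dt} \bigg|_{t=0} \det \bigl( M_{\btheta + t \bb} - I_K \bigr) = \mathrm{tr} \bigl( \mathrm{adj}(M_\btheta - I_K) \cdot D \bigr),
\end{equation*}
where $D$ is the derivative of $t \mapsto M_{\btheta + t \bb}$ at $0$. Because $1$ is a simple eigenvalue of $M_\btheta$, $M_\btheta - I_K$ has corank exactly $1$, so its adjugate is rank one: $\mathrm{adj}(M_\btheta - I_K) = c\, \bb\, X_\btheta^\intercal$ for a nonzero scalar $c$ (equal, up to normalization, to the product of the nonzero eigenvalues of $M_\btheta - I_K$). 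Cyclic invariance of the trace then gives $c \cdot X_\btheta^\intercal D\, \bb$. Expanding $D_{i,j} = \sum_k \bb_k \partial_{\theta_k} (M_\btheta)_{i,j}$ and comparing with the Hessian $H_{X_\btheta}$ of $f_{X_\btheta} = -X_\btheta^\intercal \chi$ (whose entries are precisely $(H_{X_\btheta})_{j,k} = -\sum_i (X_\btheta)_i \partial_{\theta_k} (M_\btheta)_{i,j}$), one obtains $X_\btheta^\intercal D\, \bb = -\bb^\intercal H_{X_\btheta} \bb$. By Lemma \ref{lem:resultonc1}(i), under the nonlocalization assumption $H_{X_\btheta}$ is negative definite; since $\bb \neq 0$, we conclude $\bb^\intercal H_{X_\btheta} \bb < 0$, and hence the directional derivative is nonzero.

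The main obstacle in this plan is the last step: one must correctly identify the rank-one structure of $\mathrm{adj}(M_\btheta - I_K)$ (which crucially needs the simplicity of the eigenvalue $1$ supplied by Perron--Frobenius) and track the symmetry $\partial_{\theta_k}(M_\btheta)_{i,j} = \partial_{\theta_k}\partial_{\theta_j} g^{(i)}(\btheta)$ in order to match the trace against the Hessian of $f_{X_\btheta}$. Once the computation is brought into the form $-\bb^\intercal H_{X_\btheta} \bb$, strict concavity of $f_{X_\btheta}$ (Lemma \ref{lem:resultonc1}(i), using nonlocalization) closes the argument and the inverse function theorem delivers the conclusion.
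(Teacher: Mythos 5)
Your proposal is correct and follows essentially the same route as the paper's proof: both identify $J_\chi(\btheta)=M_\btheta-I_K$, use the positivity of $X_\btheta$ (Perron--Frobenius) to show $\ker\pi\cap\mathrm{Im}(M_\btheta-I_K)=\{\bzero\}$ so that the kernel of the first $K-1$ rows is spanned by the right $1$-eigenvector, and then combine Jacobi's formula with the rank-one structure of the cofactor/adjugate matrix to reduce the last row's pairing with that eigenvector to the quadratic form $\bb^\intercal H(f_{X_\btheta})\bb$, which is nonzero by strict concavity under nonlocalization (Lemma \ref{lem:resultonc1}(i)). No gaps; the only difference is cosmetic (you argue directly on the reduced condition, while the paper phrases it as a contradiction for a nonzero kernel vector).
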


Clearly, Lemmas \ref{lem:nul} and \ref{lem:invertible} along with Proposition \ref{prop:Mcrit is nonempty} imply Proposition \ref{prop:open}.

\begin{proof}[Proof of Lemma \ref{lem:invertible}]
Fix $\btheta$ such that $\bmu_{\btheta}$ is critical. We will show that $F$ is a local diffeomorphism around $\btheta$, which boils down to proving that its Jacobian matrix at $\btheta$ is invertible. First, observe that the Jacobian matrix $J_{\chi}(\btheta)$ of the function $\chi$ at $\btheta$ satisfies $J_{\chi}(\btheta)=M_{\btheta}-I_K$. Let us prove that $\ker(\pi \circ J_{\chi}(\btheta))=\ker(J_{\chi}(\btheta))$ (seeing $J_{\chi}(\btheta)$ as a map from $\R^K$ to $\R^K$). This is equivalent to showing that $\text{Im}(J_{\chi}(\btheta))\cap \ker(\pi) = \{ \bzero \}$.
Let $Z \in \text{Im}(J_{\chi}(\btheta))\cap \ker(\pi)$ and assume that $Z \neq \bzero$. By definition of $\pi$, $Z \in \R \bone$, so that $Z$ has only negative coordinates or only positive ones. On the other hand, we have $\text{Im}(J_{\chi}(\btheta))=\ker(J_{\chi}(\btheta)^{\intercal})^{\perp}$. By the Perron-Frobenius theorem, $\ker(J_{\chi}(\btheta)^{\intercal})$ is a one-dimensional vector space generated by the vector $X_{\btheta}$ which has positive coordinates (see Definition \ref{def:asymdir}). Since $Z \in \text{Im}(J_{\chi}(\btheta))$, we would have $X_{\btheta} \cdot Z=0$ which is not possible because $Z$ has coordinates of the same sign and $Z \neq \bzero$. Hence, $\ker(J_{\chi}(\btheta))=\ker(\pi \circ J_{\chi}(\btheta))$.

By Jacobi's formula, we have:
\begin{equation} 
\label{eq:jacobi}
    \frac{\partial}{\partial x_i} \det(M_{\btheta}-I_K) = \text{Tr}\left(\text{com}(M_{\btheta}-I_K)^{\intercal}\frac{\partial}{\partial x_i} M_{\btheta}\right),
\end{equation}
where $\text{com}(A)$ denotes the cofactor matrix of $A$. Since $\bmu_{\btheta}$ is critical, $M_{\btheta}-I_K$ is of rank $K-1$ and $\text{com}(M_{\btheta}-I_K)$ is of rank $1$. Therefore, there exists $\lambda\in\R\backslash\{0\}$ such that  $\text{com}(M_{\btheta}-I_K)^{\intercal}=  \lambda  Y X_{\btheta}^{\intercal}$, where $Y \in (0,+\infty)^K$ is a 1-right eigenvector of $J_{\chi}(\btheta)$.  (remember that $X_{\btheta}^{\intercal}M_{\btheta} = X_{\btheta}^{\intercal}$ and $M_{\btheta} Y= Y$). 

We now have all the tools to prove that the Jacobian matrix $J_F$ of $F$ is invertible at $\btheta$, that is, $\ker(J_F(\btheta))=\{ \bzero \}$. To this end, let $U \in \ker(J_F(\btheta))$ and assume by contradiction that $U \neq  \bzero $. By definition, we have $U \in \ker \pi(J_{\chi}(\btheta)) = \ker(J_{\chi}(\btheta))$, so $U=\nu Y$ for some $\nu \in \R \backslash \{0\}$. In addition, if $U \in \ker(J_F)$, we have by \eqref{eq:jacobi}:
\begin{equation*}
    \sum_i U_i \text{Tr}\left(\text{com}(M_{\btheta}-I_K)^{\intercal}\frac{\partial}{\partial x_i} M_{\btheta}\right)=0.
\end{equation*}
We use again the notation $g^{(i)}(\btheta):=\log(\phi^{(i)}(e^{\btheta}))$, so that in particular $M_{\btheta}(i,j)= \frac{\partial}{\partial x_j} g^{(i)}(\btheta)$. Computing the left-hand term, we get
\[
\begin{aligned}
\sum_i U_i \text{Tr}\left(\text{com}(M_{\btheta}-I_K)^{\intercal}\frac{\partial}{\partial x_i} M_{\btheta}\right)
=&  \sum_i \nu Y(i) \text{Tr}\left( \lambda Y X_{\btheta}^{\intercal}\frac{\partial}{\partial x_i} M_{\btheta}\right)\\
=& \lambda \nu\sum_i Y(i) \sum_{j,k}  X_{\btheta}(j)  \left(\frac{\partial}{\partial x_i} M_{\btheta}\right)_{j,k}Y(k)\\
=& \lambda \nu \sum_i Y(i) \sum_{j,k} X_{\btheta}(j) Y(k) \frac{\partial^2}{\partial x_i \partial x_k}\big(g^{(j)}(\btheta)\big)\\
=& \lambda \nu \sum_j X_{\btheta}(j) \sum_{i,k} Y(i)Y(k) \frac{\partial^2}{\partial x_i \partial x_k}\big(g^{(j)}(\btheta)\big)\\
=& \lambda \nu \sum_{i,j} Y(i) Y(j) \left( H(-f_{X_{\btheta}}) \right)_{i,j},
\end{aligned}
\]

where $H(-f_{X_{\btheta}})$ is the Hessian of the function $-f_{X_{\btheta}}$, where $f_X$ is defined in Definition \ref{def:fx}.
Since $H(-f_{X_{\btheta}})$ is positive definite by Lemma \ref{lem:resultonc1} (i) (using the fact that $\bmu$ is nonlocalized) and $Y \neq\bzero$, the right-hand side cannot be equal to $0$. This implies that $\ker(J_F)=\{\bzero\}$, so $J_F$ is invertible at $\btheta$.
\end{proof}

\section{Properties of the function $\chi$}
\label{sec:propertiesofthefunctionc}

In this section, we show some properties of the function $\chi$, which will be useful in the proof of our main results.

\subsection{Structure of the asymptotic cones}

Here we prove a general result about asymptotic directions, which may be of independent interest - namely, the asymptotic cone $\mathcal{D}^{\Gamma,\overline{\btheta}}_{asy}$ (see Definition \ref{def:asymdir})  is open whenever it does not contain $\bzero$. More generally, the structure of the asymptotic cone strongly depends on whether $\bzero \in\mathcal{D}^{\Gamma,\overline{\btheta}}_{asy}$ or not. The following lemma allows us to characterize the former case precisely.

\begin{Lemma}\label{lem:0inDasy}
    Assume that $\bmu$ is entire, finite, nondegenerate, nonlocalized and irreducible. Let $\overline{\btheta} \in \R^K$ and $\Gamma\in \cM^*_{\ell,K}(\Z)$ of rank $\ell$. The following are equivalent:
    \begin{itemize}
        \item[(i)] $\bzero\in \mathcal{D}^{\Gamma,\overline{\btheta}}_{asy}$ ;
        \item[(ii)] $\mathcal{D}^{\Gamma,\overline{\btheta}}_{asy}=\{\bzero\}$ ;
        \item[(iii)] $\overline{\btheta}$ is critical, $\Gamma X_{\overline{\btheta}}=\bzero$ and $\left(\btheta\sim_{\Gamma} \overline{\btheta} \implies \btheta= \overline{\btheta}\right)$.
    \end{itemize}
\end{Lemma}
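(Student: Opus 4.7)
The plan is to establish the cycle of implications (ii) $\Rightarrow$ (i) $\Rightarrow$ (iii) $\Rightarrow$ (ii). The two easy directions come first. For (ii) $\Rightarrow$ (i), the equality $\mathcal{D}_{asy}^{\Gamma,\overline{\btheta}} = \{\bzero\}$ immediately yields $\bzero \in \mathcal{D}_{asy}^{\Gamma,\overline{\btheta}}$. For (iii) $\Rightarrow$ (ii), the third condition in (iii) forces $\{\btheta \in \cM_{crit} : \btheta \sim_\Gamma \overline{\btheta}\} = \{\overline{\btheta}\}$, so by the definition of the $(\Gamma,\overline{\btheta})$-asymptotic cone one has $\mathcal{D}_{asy}^{\Gamma,\overline{\btheta}} = \{\lambda\, \Gamma X_{\overline{\btheta}} : \lambda>0\} = \{\bzero\}$, using $\Gamma X_{\overline{\btheta}} = \bzero$.

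The substantive work lies in (i) $\Rightarrow$ (iii). Assume $\bzero \in \mathcal{D}_{asy}^{\Gamma,\overline{\btheta}}$, and pick a witness $\btheta \in \cM_{crit}$ with $\btheta \sim_\Gamma \overline{\btheta}$ and $\Gamma X_\btheta = \bzero$. The strategy is to apply Lemma \ref{lem:resultonc1}(ii): since $\bmu$ is nonlocalized, $\btheta$ is the unique global maximizer of $f_{X_\btheta}$ on $\R^K$, and I will show that every $\btheta'$ with $\btheta' \sim_\Gamma \overline{\btheta}$ also achieves this maximum and therefore coincides with $\btheta$. The argument relies on the orthogonality built into Definition \ref{def:eqtilt}. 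Since $\Gamma X_\btheta = \bzero$, the vector $X_\btheta$ lies in $\ker \Gamma$ and is therefore orthogonal to every element of $(\ker \Gamma)^\perp$. For any $\btheta'$ with $\btheta' \sim_\Gamma \overline{\btheta}$, transitivity of $\sim_\Gamma$ gives $\btheta' \sim_\Gamma \btheta$, i.e.\ $\chi(\btheta') - \chi(\btheta) \in (\ker \Gamma)^\perp$; pairing this vector with $X_\btheta$ yields
\[
f_{X_\btheta}(\btheta') - f_{X_\btheta}(\btheta) = -X_\btheta \cdot \bigl(\chi(\btheta') - \chi(\btheta)\bigr) = 0.
\]
Uniqueness of the maximizer forces $\btheta' = \btheta$. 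Applying this conclusion to $\btheta' = \overline{\btheta}$ gives $\overline{\btheta} = \btheta$, so $\overline{\btheta}$ is critical and $\Gamma X_{\overline{\btheta}} = \bzero$; applied instead to an arbitrary $\btheta'$ with $\btheta' \sim_\Gamma \overline{\btheta}$, the same conclusion $\btheta' = \btheta = \overline{\btheta}$ yields the implication in the third condition of (iii).

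The only delicate point is the orthogonality step, which is a routine consequence of the fact that $(\ker \Gamma)^\perp$ is the genuine Euclidean orthogonal complement of $\ker \Gamma$ in $\R^K$. Beyond that, the main obstacle is organizational rather than technical: one must arrange the implications so that Lemma \ref{lem:resultonc1}(ii) is invoked once, with the precisely correct asymptotic direction $X_\btheta$ coming from a single witness given by hypothesis (i), so that the strict concavity of $f_{X_\btheta}$ can be leveraged simultaneously to identify $\overline{\btheta}$ with $\btheta$ and to collapse the entire $\Gamma$-equivalence class to a single point.
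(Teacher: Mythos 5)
Your proof is correct and follows essentially the same route as the paper: the two easy implications are handled definitionally, and for (i) $\Rightarrow$ (iii) you use $\Gamma X_{\btheta}=\bzero$ together with $\chi(\btheta')-\chi(\btheta)\in(\ker\Gamma)^\perp=\mathrm{Im}(\Gamma^\intercal)$ to get $X_{\btheta}^\intercal(\chi(\btheta')-\chi(\btheta))=0$, then invoke Lemma \ref{lem:resultonc1}(ii) (unique maximizer of $f_{X_{\btheta}}$ under nonlocalization) to collapse the $\Gamma$-equivalence class to $\{\btheta\}$, exactly as in the paper (which phrases the same step as a strict inequality $X_{\btheta'}^\intercal(\chi(\btheta')-\chi(\btheta))<0$ for $\btheta\neq\btheta'$).
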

\begin{proof}
Assume that $\overline{\btheta}$ is critical, $\Gamma X_{\overline{\btheta}}=\bzero$ and $(\btheta\sim_{\Gamma} \overline{\btheta} \implies \btheta= \overline{\btheta})$. Then, by definition of the asymptotic cone, $\mathcal{D}^{\Gamma,\overline{\btheta}}_{asy}=\{\bzero\}$, so (iii) $\Rightarrow$ (ii).\\
It is also clear that (ii) $\Rightarrow$ (i).\\
We only have to prove that (i) $\Rightarrow$ (iii). Assume that $\bzero\in\mathcal{D}^{\Gamma,\overline{\btheta}}_{asy}$. This means that there exists $\btheta'$ critical such that $\btheta'\sim_{\Gamma} \overline{\btheta}$ and $\Gamma X_{\btheta'}=\bzero$. By definition of $\Gamma$-equivalence, for any $\btheta\sim_{\Gamma}\overline{\btheta}$, we have $\btheta \sim_\Gamma \btheta'$ and thus there exists a vector $R \in \R^\ell$ such that $\chi(\btheta')-\chi(\btheta)=\Gamma^{\intercal}R$. This in turn means that the following holds:
\begin{equation}
    \forall \btheta\sim_{\Gamma}\overline{\btheta},\exists R\in\R^{\ell},\   X_{\btheta'}^{\intercal}(\chi(\btheta')-\chi(\btheta))
    =X_{\btheta'}^{\intercal}\Gamma^{\intercal}R=\bzero^{\intercal}R=0.
\end{equation}
On the other hand, using Lemma \ref{lem:resultonc1} (ii) and the fact that $\bmu$ is nonlocalized, we get 
that for any $\btheta\not=\btheta'$, we have:
\begin{equation}
    X_{\btheta'}^{\intercal}(\chi(\btheta')-\chi(\btheta))
    <0.
\end{equation}
This implies that, necessarily, $\btheta\sim_{\Gamma} \overline{\btheta} \implies \btheta= \btheta'$, which implies (iii).
\end{proof}

We now consider the second case, where $\bzero\not\in\mathcal{D}^{\Gamma,\overline{\btheta}}_{asy}$. In this case, it turns out that the set $\mathcal{D}^{\Gamma,\overline{\btheta}}_{asy}$ is open.

\begin{Proposition}\label{prop:opendirection2}
Assume that $\bmu$ is entire, finite, nondegenerate, nonlocalized and irreducible. Let $\overline{\btheta} \in \R^K$ and $\Gamma\in \cM^*_{\ell,K}(\Z)$ of rank $\ell$. Then, either $\mathcal{D}^{\Gamma,\overline{\btheta}}_{asy}=\{\bzero\}$ or the set $\mathcal{D}^{\Gamma,\overline{\btheta}}_{asy}$ is open and $\bzero\not\in \mathcal{D}^{\Gamma,\overline{\btheta}}_{asy}$. In particular, the set $\mathcal{D}_{asy}$ is open.
\end{Proposition}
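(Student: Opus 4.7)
The plan is to deduce the dichotomy directly from Lemma \ref{lem:0inDasy} and then to establish openness by exhibiting the cone $\mathcal{D}^{\Gamma,\overline{\btheta}}_{asy}$ locally as the image of a smooth map with surjective differential. Lemma \ref{lem:0inDasy} immediately provides the alternative: either $\bzero\in\mathcal{D}^{\Gamma,\overline{\btheta}}_{asy}$, in which case $\mathcal{D}^{\Gamma,\overline{\btheta}}_{asy}=\{\bzero\}$, or $\bzero\notin\mathcal{D}^{\Gamma,\overline{\btheta}}_{asy}$. In the latter case, since $\mathcal{D}^{\Gamma,\overline{\btheta}}_{asy}$ is stable under positive scalings, it suffices to show that for every $\btheta_0\in\cM_{crit}$ with $\btheta_0\sim_\Gamma\overline{\btheta}$ the point $\Gamma X_{\btheta_0}\neq\bzero$ has an open neighborhood in $\R^\ell$ contained in $\mathcal{D}^{\Gamma,\overline{\btheta}}_{asy}$.

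First I would give a local description of the set $S:=\{\btheta\in\cM_{crit}:\btheta\sim_\Gamma\overline{\btheta}\}$ near $\btheta_0$. By Proposition \ref{prop:open}, $\cM_{crit}$ is a smooth $(K-1)$-dimensional manifold, and $d\chi_{\btheta_0}=M_{\btheta_0}-I_K$ maps $T_{\btheta_0}\cM_{crit}$ isomorphically onto $X_{\btheta_0}^\perp$. The linearized $\Gamma$-equivalence constraint reads $d\chi(\delta\btheta)\in\mathrm{Im}(\Gamma^\intercal)$; the hypothesis $\Gamma X_{\btheta_0}\neq\bzero$ is exactly that $X_{\btheta_0}\notin(\mathrm{Im}(\Gamma^\intercal))^\perp$, which makes $X_{\btheta_0}^\perp$ and $\mathrm{Im}(\Gamma^\intercal)$ transverse in $\R^K$, so their intersection has dimension $\ell-1$. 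The implicit function theorem then yields that $S$ is a smooth $(\ell-1)$-dimensional submanifold near $\btheta_0$. I would then introduce the smooth map
\begin{equation*}
\Phi:(0,\infty)\times S_{\mathrm{loc}}\longrightarrow\R^\ell,\qquad\Phi(\lambda,\btheta)=\lambda\,\Gamma X_\btheta,
\end{equation*}
whose image lies in $\mathcal{D}^{\Gamma,\overline{\btheta}}_{asy}$ and whose source and target both have dimension $\ell$.

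The hard part will be to show that $d\Phi_{(1,\btheta_0)}$ has trivial kernel (and thus is an isomorphism). Suppose $\delta\lambda\,\Gamma X_{\btheta_0}+\Gamma\,dX(\delta\btheta)=\bzero$, so that $u:=\delta\lambda\,X_{\btheta_0}+dX(\delta\btheta)$ lies in $\ker\Gamma=(\mathrm{Im}(\Gamma^\intercal))^\perp$. My strategy is to pair $u$ with $d\chi(\delta\btheta)\in\mathrm{Im}(\Gamma^\intercal)$, which must yield $0$. The cross-term $X_{\btheta_0}^\intercal d\chi(\delta\btheta)$ vanishes because $X_{\btheta_0}^\intercal(M_{\btheta_0}-I_K)=\bzero$, and differentiating the eigenvector identity $X_\btheta^\intercal M_\btheta=X_\btheta^\intercal$ produces the relation $(M_{\btheta_0}-I_K)^\intercal dX(\delta\btheta)=H(f_{X_{\btheta_0}})\,\delta\btheta$. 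The remaining inner product thus reduces to the quadratic form $\delta\btheta^\intercal H(f_{X_{\btheta_0}})\,\delta\btheta$, which by Lemma \ref{lem:resultonc1}(i) and the nonlocalization hypothesis is strictly negative unless $\delta\btheta=\bzero$. This forces $\delta\btheta=\bzero$, and then $u=\delta\lambda\,X_{\btheta_0}\in\ker\Gamma$ together with $\Gamma X_{\btheta_0}\neq\bzero$ forces $\delta\lambda=0$. The inverse function theorem then guarantees that $\Phi$ is a local diffeomorphism at $(1,\btheta_0)$, so $\Gamma X_{\btheta_0}$ lies in the interior of $\mathcal{D}^{\Gamma,\overline{\btheta}}_{asy}$.

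Finally, for the ``in particular'' statement on $\mathcal{D}_{asy}$, I would specialize to $\Gamma=I_K$: every tilting is then trivially $\Gamma$-equivalent to $\bzero$, so $\mathcal{D}^{I_K,\bzero}_{asy}=\mathcal{D}_{asy}$, and the condition $\Gamma X_\btheta\neq\bzero$ is automatic since $X_\btheta$ has positive coordinates. The previous argument then delivers the openness of $\mathcal{D}_{asy}$.
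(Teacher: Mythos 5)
Your proof is correct, but it follows a genuinely different route from the paper's. The paper's proof uses a topological argument: it defines the map $G^{\Gamma}(\lambda,\btheta)=(\lambda\Gamma X_{\btheta},P\chi(\btheta))$ (where $\ker P=\mathrm{Im}(\Gamma^{\intercal})$) on $(0,\infty)\times\cM_{crit}$, shows it is \emph{continuous} via the comatrix formula for $X_\btheta$, establishes \emph{injectivity} by invoking Lemma \ref{lem:injective2} (a global statement, itself derived from the concavity inequalities of Lemma \ref{lem:resultonc1}), and then concludes by Brouwer's invariance of domain. You instead work \emph{infinitesimally}: you identify the constraint surface $S=\{\btheta\in\cM_{crit}:\btheta\sim_\Gamma\overline{\btheta}\}$ as an $(\ell-1)$-manifold by transversality, and show directly that the differential of $\Phi(\lambda,\btheta)=\lambda\Gamma X_{\btheta}$ is an isomorphism, so that the inverse function theorem applies. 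The key computation — pairing $u=\delta\lambda\,X_{\btheta_0}+dX(\delta\btheta)\in\ker\Gamma$ against $d\chi(\delta\btheta)\in\mathrm{Im}(\Gamma^{\intercal})$ and using the differentiated eigenvector identity $(M_{\btheta_0}-I_K)^{\intercal}dX(\delta\btheta)=H(f_{X_{\btheta_0}})\delta\btheta$ to reduce to the definite Hessian — is a nice linearized version of the concavity argument that underlies Lemma \ref{lem:injective2}, so the two proofs ultimately draw on the same source (Lemma \ref{lem:resultonc1}(i)) but at different levels. Your approach is slightly stronger in output (local diffeomorphism rather than just local homeomorphism), but it implicitly requires \emph{smoothness} of $\btheta\mapsto X_\btheta$ on $\cM_{crit}$, which you should justify — it follows from the comatrix expression $X_\btheta=\pm\,\mathrm{com}(M_\btheta-I_K)\bone/\|\mathrm{com}(M_\btheta-I_K)\bone\|_1$ used in the paper, since $\mathrm{com}(M_\btheta-I_K)\neq 0$ on $\cM_{crit}$; likewise, the assertion that $d\chi_{\btheta_0}$ maps $T_{\btheta_0}\cM_{crit}$ isomorphically onto $X_{\btheta_0}^{\perp}$ deserves a pointer to Lemma \ref{lem:invertible} (it is equivalent to the right $1$-eigenvector of $M_{\btheta_0}$ being transverse to $\cM_{crit}$). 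The paper's invariance-of-domain route avoids both of these differentiability checks, which is likely why it was chosen.
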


The proof of Proposition \ref{prop:opendirection2} uses the following lemma, which states that the measures associated to two different critical parameters $\btheta\not=\btheta'$ cannot have the same asymptotic direction. For any $\btheta \in \cM_{crit}$, define
\begin{align*}
    E^{\Gamma,\btheta} := \{\btheta' \in \cM_{crit} | \btheta' \sim_\Gamma \btheta, \Gamma X_\btheta' \text{ and } \Gamma X_\btheta \text{ are collinear} \}.
\end{align*}

\begin{Lemma}\label{lem:injective2}
Let $\bmu$ be an entire, nonlocalized and irreducible measure, and let $\Gamma \in \cM^*_{\ell,K}(\Z)$ for some $\ell \geq 1$. Let $\btheta \in \mathcal{M}_{\text{crit}}$. Then the following holds:
\begin{itemize}
    \item[(i)] $|E^{\Gamma,\btheta}| \in \{1, 2\}$.
    \item[(ii)] If $|E^{\Gamma,\btheta}| = 2$ and $E^{\Gamma,\btheta} = \{\btheta,\btheta'\}$ then there exists $\lambda<0$ such that $\Gamma X_\btheta' =\lambda \Gamma X_\btheta$.
    \item[(iii)] In addition, if $\Gamma \in \cM^*_{\ell,K}(\Z_+)$, then $E^{\Gamma,\btheta} = \{ \btheta \}$.
\end{itemize}
\end{Lemma}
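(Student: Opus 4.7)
The plan is to leverage Lemma~\ref{lem:resultonc1}(ii), which identifies each $\btheta^\ast \in \cM_{\text{crit}}$ as the \emph{unique} global maximizer of $f_{X_{\btheta^\ast}}$ (noting that nonlocalization, being a property of the support, is preserved under tilting). Rewriting $f_X = -X^\intercal \chi$, this gives the strict inequality $X_{\btheta^\ast}^\intercal(\chi(\btheta) - \chi(\btheta^\ast)) > 0$ for every $\btheta \neq \btheta^\ast$. Combined with the defining property of $\sim_\Gamma$---namely $\chi(\btheta') - \chi(\btheta) \in (\ker \Gamma)^\perp = \operatorname{Im}(\Gamma^\intercal)$---this will yield sign constraints on $(\Gamma X_\btheta) \cdot R$ and $(\Gamma X_{\btheta'}) \cdot R$ for any $R \in \R^\ell$ satisfying $\chi(\btheta') - \chi(\btheta) = \Gamma^\intercal R$, and these in turn will control the sign of the scalar relating $\Gamma X_{\btheta'}$ to $\Gamma X_\btheta$.

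To prove (ii), I would fix $\btheta' \in E^{\Gamma,\btheta}$ with $\btheta' \neq \btheta$ and pick such an $R$. Applying the displayed strict inequality at $\btheta^\ast = \btheta$ and then at $\btheta^\ast = \btheta'$ gives $(\Gamma X_\btheta) \cdot R > 0$ and $(\Gamma X_{\btheta'}) \cdot R < 0$. The first already forces $\Gamma X_\btheta \neq \bzero$, so the collinearity clause in the definition of $E^{\Gamma,\btheta}$ can be written $\Gamma X_{\btheta'} = \lambda \, \Gamma X_\btheta$ for a unique scalar $\lambda$; plugging into the second inequality then gives $\lambda < 0$.

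For (i) I would argue by contradiction: suppose $\btheta', \btheta'' \in E^{\Gamma,\btheta} \setminus \{\btheta\}$ are distinct. By (ii) applied at $\btheta$, write $\Gamma X_{\btheta'} = \lambda' \Gamma X_\btheta$ and $\Gamma X_{\btheta''} = \lambda'' \Gamma X_\btheta$ with $\lambda', \lambda'' < 0$. Since $\sim_\Gamma$ is an equivalence relation, $\btheta'' \sim_\Gamma \btheta'$, and the identity $\Gamma X_{\btheta''} = (\lambda''/\lambda') \Gamma X_{\btheta'}$ exhibits $\btheta'' \in E^{\Gamma,\btheta'} \setminus \{\btheta'\}$. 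Applying (ii) at $\btheta'$ would then force $\lambda''/\lambda' < 0$, contradicting $\lambda', \lambda'' < 0$. Hence $|E^{\Gamma,\btheta}| \leq 2$, which is (i).

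Finally, for (iii), the assumption $\Gamma \in \cM^*_{\ell,K}(\Z_+)$ combined with Perron--Frobenius (which guarantees strictly positive coordinates for every $X_{\btheta^\ast}$) ensures that $\Gamma X_{\btheta^\ast}$ has nonnegative coordinates, with at least one strictly positive, for every $\btheta^\ast \in \cM_{\text{crit}}$. Any two such vectors can only be collinear via a nonnegative scalar, ruling out the $\lambda < 0$ conclusion of (ii), so $E^{\Gamma,\btheta} = \{\btheta\}$. The most delicate step in this plan, in my view, is the initial interplay between the two strict inequalities on $(\Gamma X_\btheta) \cdot R$ and $(\Gamma X_{\btheta'}) \cdot R$: one must check at the very start that $\Gamma X_\btheta$ cannot vanish as soon as a second element of $E^{\Gamma,\btheta}$ exists, which happily follows immediately from the strict positivity of the first of these inner products.
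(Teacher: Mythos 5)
Your argument is correct and follows essentially the same route as the paper: both rest on the two strict inequalities from Lemma \ref{lem:resultonc1} (ii) applied at $\btheta$ and at $\btheta'$, combined with writing $\chi(\btheta')-\chi(\btheta)=\Gamma^{\intercal}R$ to get opposite signs for $(\Gamma X_{\btheta})\cdot R$ and $(\Gamma X_{\btheta'})\cdot R$, which forces $\lambda<0$ and rules out three distinct elements as well as the nonnegative-$\Gamma$ case. The only cosmetic point is that your proof of (i) invokes ``(ii)'' in its strengthened form (without the $|E^{\Gamma,\btheta}|=2$ hypothesis), but your argument for (ii) indeed establishes exactly that stronger statement, so there is no circularity.
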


An immediate corollary is the following.

\begin{Corollary}
\label{cor:asymp dir are not collin}
    For any $\btheta, \btheta' \in \cM_{crit}$ such that $\btheta \neq \btheta'$, $X_\btheta\not= X_{\btheta'}$.
\end{Corollary}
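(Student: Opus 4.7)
The plan is to deduce this corollary directly from Lemma \ref{lem:injective2} by specialising to $\Gamma = I_K$, the $K \times K$ identity matrix. Note first that $I_K$ belongs to $\cM^*_{K,K}(\Z_+)$, so part (iii) of Lemma \ref{lem:injective2} applies. Moreover, $\ker I_K = \{\bzero\}$, so that $(\ker I_K)^{\perp} = \R^K$; by Definition \ref{def:eqtilt}, any two elements of $\R^K$ are automatically $I_K$-equivalent. Consequently, for every $\btheta \in \cM_{crit}$, the set $E^{I_K, \btheta}$ reduces by Lemma \ref{lem:injective2}~(iii) to $\{\btheta\}$ itself.

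From here the argument is immediate by contradiction. Suppose there exist $\btheta, \btheta' \in \cM_{crit}$ with $\btheta \neq \btheta'$ but $X_{\btheta} = X_{\btheta'}$. Since $I_K X_{\btheta} = X_{\btheta}$ and $I_K X_{\btheta'} = X_{\btheta'}$, these two vectors are (trivially) collinear, and we have already noted that $\btheta' \sim_{I_K} \btheta$. Hence $\btheta' \in E^{I_K, \btheta} = \{\btheta\}$, contradicting $\btheta \neq \btheta'$.

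There is no real obstacle in this step: the corollary is simply the statement obtained from Lemma \ref{lem:injective2}~(iii) by taking the identity matrix, which maximally trivialises both the $\Gamma$-equivalence relation and the collinearity condition. All the work has already been done in the proof of Lemma \ref{lem:injective2}; I would just spell out the two sentences above, emphasising that the nonnegativity of $I_K$'s entries is what rules out the second case allowed in parts (i)--(ii) of the lemma (where two critical tiltings could share collinear images under $\Gamma$ with a negative coefficient).
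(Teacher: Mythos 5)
Your proof is correct and follows essentially the same route as the paper's: both apply Lemma \ref{lem:injective2}~(iii) with $\Gamma = I_K \in \cM^*_{K,K}(\Z_+)$, note that all parameters are automatically $I_K$-equivalent, and conclude that $E^{I_K,\btheta} = \{\btheta\}$ rules out a distinct $\btheta'$ with $X_{\btheta'} = X_{\btheta}$. The paper phrases it as ``not collinear implies not equal'' using the normalization $\|X_\btheta\|_1 = 1$, while you argue by contradiction from assumed equality, but the content is identical.
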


\begin{proof}[Proof of Corollary \ref{cor:asymp dir are not collin}]
By Lemma \ref{lem:injective2} (iii) to $\Gamma = I_K \in \cM^*_{K,K}(\Z_+)$, $X_\btheta$ and $X_{\btheta'}$ are not collinear. Since $||X_\btheta||_1=||X_{\btheta'}||_1=1$ and both have positive coordinates, it is equivalent to saying that $X_\btheta \neq X_{\btheta'}$.
\end{proof}

\begin{proof}[Proof of Lemma \ref{lem:injective2}]
Fix $\btheta, \btheta' \in \cM_{crit}$ such that $\btheta \neq \btheta'$ and $\btheta \sim_\Gamma \btheta'$, and assume that $\Gamma X_{\btheta}$ and $\Gamma X_{\btheta'}$ are collinear. Without loss of generality, we can assume that there exists $\lambda \in \R$ such that $\Gamma X_{\btheta}=\lambda \Gamma X_{\btheta'}$. By Lemma \ref{lem:resultonc1} (ii), we have:
\[
X_{\btheta}^{\intercal}\big(\chi(\btheta')-\chi(\btheta)\big)>0.
\]
Similarly we have:
\[
X^{\intercal}_{\btheta'}\big(\chi(\btheta')-\chi(\btheta)\big)<0.
\]

By definition of $\Gamma$-equivalence (using that for any matrix $A$, $\ker(A)^{\perp}=\text{Im}(A^{\intercal})$), there exists $R\in\R^\ell$ such that $\chi(\btheta')-\chi(\btheta)=\Gamma^{\intercal} R$. We can thus write 
\begin{equation}
\label{eq:impossible si lambda > 0}
X_{\btheta'}^{\intercal}\Gamma^{\intercal} R<0< X_{\btheta}^{\intercal}\Gamma^{\intercal} R=\lambda X_{\btheta'}^{\intercal}\Gamma^{\intercal} R.
\end{equation}

\noindent In particular, if $\lambda \geq 0$ then there is no solution.

Let us prove (i) and (ii). Assume that $|E^{\Gamma,\btheta}|\geq 3$ and choose $\btheta_1, \btheta_2, \btheta_3 \in E^{\Gamma,\btheta}$ distinct. Then, necessarily there exists $i \neq j$ and $\lambda' \geq 0$ such that $\Gamma X_{\btheta_i} = \lambda' \Gamma X_{\btheta_j}$, which is impossible. Hence, $|E^{\Gamma,\btheta}|\leq 2$. Since $\btheta \in E^{\Gamma,\btheta}$, we get (i), and (ii) follows again from the fact that \eqref{eq:impossible si lambda > 0} has no solution if $\lambda\geq 0$.
We finally prove (iii). Let $\Gamma \in \cM^*_{\ell,K}(\Z_+)$ and $\btheta, \btheta' \in \cM_{crit}$ such that $\btheta' \in E^{\Gamma,\btheta}$. Since $\Gamma$ is not the null matrix and has nonnegative coefficients and $X_\btheta, X_{\btheta'}$ have only positive coefficients (by the Perron-Frobenius theorem) then both $\Gamma X_\btheta$ and $\Gamma X_{\btheta'}$ have nonnegative coefficients and are not $\bzero$. This means that necessarily $\lambda > 0$, which is not possible if $\btheta\not=\btheta'$ by \eqref{eq:impossible si lambda > 0}. This concludes the proof.
\end{proof}

We can now prove Proposition \ref{prop:opendirection2}.

\begin{proof}[Proof of Proposition \ref{prop:opendirection2}]
Consider $\overline{\btheta}$ such that $\mathcal{D}_{asy}^{\Gamma,\overline{\btheta}} \neq \{ \bzero \}$. Let $P$ be a linear map from $\R^K$ to $\R^{K-\ell}$ with kernel $\ker{P}=(\ker{\Gamma})^{\perp}=\text{Im}(\Gamma^{\intercal})$. We define the function $G^{\Gamma}:(0,+\infty)\times\mathcal{M}_{\text{crit}}\rightarrow \R^{K}$ by:
    \[
    G^{\Gamma}(\lambda,\btheta)= (\lambda \Gamma X_{\btheta}, P \chi(\btheta)).
    \]
Let us prove that this function is continuous. Since $\chi$ is continuous, $\btheta\rightarrow P \chi(\btheta)$ is continuous. To see that $\btheta\rightarrow \Gamma  X_{\btheta}$ is continuous, observe that, for any $\btheta \in \cM_{crit}$, we have the expression
\begin{align*}
    \text{com}(M_{\btheta}-I_K) = \frac{D_\btheta}{Y_\ell^\intercal Y_r} Y_\ell Y_r^\intercal,
\end{align*}
where $D_\btheta$ is any nonzero $(K-1)$-minor of $M_\btheta-I_K$ and $Y_\ell,Y_r$ denote respectively the $1$-left eigenvector and the $1$-right eigenvector of $M_\btheta$ with positive coordinates satisfying $||Y_\ell||_1=||Y_r||_1=1$. Such vectors exist by the Perron-Frobenius theorem. This implies that $X':=\frac{\text{com}(M_{\btheta}-I_K)\bone}{||\text{com}(M_{\btheta}-I_K)\bone||}$ is a $1$-left eigenvector of $M_\btheta$ satisfying $||X'||_1=1$, whose coordinates are either all positive or all negative. Using the notation $\vert Y\vert:= (\vert Y_1 \vert,\dots,\vert Y_n\vert)^{\intercal}$, the asymptotic direction $X_\btheta$ can be written as 
\begin{equation}
\label{eq:dorection asymp comatrice2}
X_{\btheta}=\frac{\vert\text{com}(M_{\btheta}-I_K)\bone\vert}{||\text{com}(M_{\btheta}-I_K)\bone||_1}.
\end{equation}
Now, since $\btheta \mapsto D_{\btheta}$ is continuous and always nonzero, it is either always positive or always negative. It is clear from \eqref{eq:dorection asymp comatrice2} that $\btheta \mapsto X_\btheta$ is a continuous function of $\btheta$. Hence, $\btheta \mapsto \Gamma X_\btheta$ (and thus $G^{\Gamma}$) is also continuous.

Recall now that, by Proposition \ref{prop:open} (ii), the set $\mathcal{M}_{\text{crit}}$ is a $(K-1)$-dimensional differentiable manifold. Hence, for any $\btheta\in \mathcal{M}_{\text{crit}}$ there exists a diffeomorphism $\psi_{\btheta}$ from $(-1,1)^K$ to an open subset $U_{\btheta}\subset \R^K$ such that $U_\btheta\cap \cM_{crit}=\psi_\btheta((-1,1)^{K-1}\times\{0\})$ and $\psi_\btheta(\bzero)=\btheta$.
 We can then define, for any $\varepsilon\in (0,1]$, the function $\tilde{G}^{\Gamma,\btheta,\varepsilon}:(0,+\infty)\times(-\varepsilon,\varepsilon)^{K-1}\rightarrow \R^{K}$ as:
\[
\tilde{G}^{\Gamma,\btheta,\varepsilon}(\lambda,x)= (\lambda \Gamma X_{\psi_{\btheta}(x)}, P \chi(\psi_{\btheta}(x))).
\]
Since $\mathcal{D}_{asy}^{\Gamma,\overline{\btheta}} \neq \{ \bzero \}$, by Lemma \ref{lem:0inDasy} we have $\Gamma X_{\btheta} \not=0$. Thus, one can find $\varepsilon>0$ small enough so that, for all $x\in (-\varepsilon,\varepsilon)^{K-1}\times\{0\},\ \Gamma X_{\psi_{\btheta}(x)}\not=0$. By Lemma \ref{lem:injective2} (i), for $\varepsilon$ small enough, $\tilde{G}^{\Gamma,\btheta,\varepsilon}$ is injective. It is also continuous and therefore by the invariance of domain theorem, its image is open.

Finally, notice that $\mathcal{D}_{asy}=\mathcal{D}^{I_K,\bzero}_{asy}=\mathcal{D}^{I_K,\btheta}_{asy}$ for all $\btheta \in \R^K$. Since $X_{\bzero} \neq \bzero$, Proposition \ref{prop:opendirection2} implies that $\mathcal{D}_{asy}$ is open. 
\end{proof}

\subsection{The function $\chi$ on $\mathcal{M}_{\text{crit}}$}

In this section, we show that the set $\mathcal{C}_{image}$ is closed and convex and that $\mathcal{M}_{\text{crit}}$ is sent by $\chi$ to the boundary of $\mathcal{C}_{image}$.

\begin{Proposition}\label{prop:convex}
Assume that $\bmu$ is entire, finite, nondegenerate, nonlocalized and irreducible. Then the set $\mathcal{C}_{image}$ is closed and convex.
\end{Proposition}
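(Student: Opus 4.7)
The plan is to prove convexity and closedness simultaneously by characterizing $\mathcal{C}_{image}$ as the intersection of the supporting half-spaces provided by Lemma \ref{lem:resultonc1}(ii).

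The elementary ingredient is that each coordinate $\chi_i$ is a convex function of $\btheta$: since $\phi^{(i)}(e^\btheta) = \E_{\mu^{(i)}}[e^{\btheta \cdot \mathbf{k}}]$, H\"older's inequality makes $\btheta \mapsto \log \phi^{(i)}(e^\btheta)$ convex, and $\chi_i(\btheta) = \log \phi^{(i)}(e^\btheta) - \theta_i$ is the sum of a convex and an affine function. Therefore, for all $\btheta_1, \btheta_2 \in \R^K$ and $t \in [0,1]$,
\[
\chi((1-t)\btheta_1 + t\btheta_2) \leq (1-t)\chi(\btheta_1) + t\chi(\btheta_2),
\]
coordinatewise. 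Consequently any convex combination of two elements of $\mathcal{C}_{image}$ coordinate-wise dominates a third element of $\mathcal{C}_{image}$, so convexity reduces to the upward-closedness property: $y \in \mathcal{C}_{image}$ and $\eta \in [0,+\infty)^K$ imply $y + \eta \in \mathcal{C}_{image}$.

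I would obtain upward closedness together with closedness by establishing $\mathcal{C}_{image} = \mathcal{C}^*$, where
\[
\mathcal{C}^* := \bigcap_{\btheta \in \cM_{crit}} \left\{y \in \R^K : X_\btheta \cdot y \geq X_\btheta \cdot \chi(\btheta)\right\}.
\]
This set is closed and convex as an intersection of closed half-spaces, contains $\mathcal{C}_{image}$ by Lemma \ref{lem:resultonc1}(ii), and is automatically upward closed because each $X_\btheta$ has strictly positive coordinates by Perron-Frobenius. To deduce $\mathcal{C}_{image} = \mathcal{C}^*$, I would show that $\mathcal{C}_{image}$ is nonempty (as $\bzero = \chi(\bzero) \in \mathcal{C}_{image}$), open in $\mathcal{C}^*$, and closed in $\mathcal{C}^*$; since $\mathcal{C}^*$ is convex and hence connected, these three properties force equality. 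Openness at $y = \chi(\btheta) \in \mathcal{C}_{image}$ stems from local diffeomorphism properties of $\chi$: on the strictly subcritical region $\{\btheta : \rho_\btheta < 1\}$, all eigenvalues of $M_\btheta$ lie strictly inside the unit disk, so $M_\btheta - I_K$ is invertible and $\chi$ is a local diffeomorphism; near a critical $\btheta$, Lemma \ref{lem:invertible} supplies a local chart via $F$, from which one reads off that $\chi$ covers a one-sided neighborhood of $\chi(\btheta)$ aligned with the supporting half-space $\{z : X_\btheta \cdot z \geq X_\btheta \cdot \chi(\btheta)\}$; supercritical $\btheta$ whose spectrum contains $1$ can be replaced by a subcritical preimage of $y$ via Lemma \ref{lem:fromsupercrittosubcrit}.

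The main obstacle is the closedness of $\mathcal{C}_{image}$ in $\mathcal{C}^*$. Given $\chi(\btheta_n) \to y_\infty \in \mathcal{C}^*$, one must produce $\btheta_\infty$ with $\chi(\btheta_\infty) = y_\infty$. A bounded subsequence of $(\btheta_n)$ handles the problem by continuity of $\chi$. If $\|\btheta_n\| \to \infty$, then after using Lemma \ref{lem:fromsupercrittosubcrit} to replace each $\btheta_n$ by its subcritical representative (sharing the same $\chi$-value), so that $\btheta_n \in \{\btheta : \rho_\btheta \leq 1\}$, I would invoke the strict concavity of $f_X$ from Lemma \ref{lem:resultonc1}(i), together with the nondegeneracy, nonlocalization and irreducibility hypotheses, to show that $X \cdot \chi(\btheta_n) \to +\infty$ for some $X \in \mathcal{D}_{asy}$, contradicting the boundedness of $\chi(\btheta_n)$. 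This properness-type statement for $\chi$ restricted to the closed subcritical region is where the technical difficulty concentrates.
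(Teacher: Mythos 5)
Your route to convexity is genuinely different from the paper's (which fixes two strictly subcritical images and runs an open--closed argument along the segment joining them inside $\mathcal{C}^-:=\chi(\{\rho_{\btheta}<1\})$, then takes the closure), and several of your ingredients are sound: the coordinatewise convexity of $\chi$, the upward-closedness of $\mathcal{C}^*$, the inclusion $\mathcal{C}_{image}\subseteq\mathcal{C}^*$ via Lemma \ref{lem:resultonc1}(ii), and the properness statement driving closedness — the latter is exactly Lemma \ref{lem:explicitboundonc}, already proved in the paper, so there is no remaining "technical difficulty" there. The gap is in the openness of $\mathcal{C}_{image}$ in $\mathcal{C}^*$ at a point $y=\chi(\btheta)$ with $\btheta\in\cM_{crit}$. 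Such points cannot be sidestepped by passing to a subcritical representative: by Lemma \ref{lem:resultonc1}(ii), a critical $\btheta$ is the unique maximizer of $f_{X_{\btheta}}$, so $X_{\btheta}^{\intercal}\chi(\btheta')>X_{\btheta}^{\intercal}\chi(\btheta)$ for every $\btheta'\neq\btheta$; in particular $\btheta$ is the \emph{unique} preimage of $y$, and $y$ is the \emph{only} point of $\mathcal{C}_{image}$ lying on the hyperplane $H:=\{z:\,X_{\btheta}\cdot z=X_{\btheta}\cdot\chi(\btheta)\}$. Consequently $\chi$ does \emph{not} cover a closed one-sided neighbourhood of $y$, contrary to what you propose to "read off" from Lemma \ref{lem:invertible}. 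For your argument to work you would instead need $\mathcal{C}^*\cap H\cap B_{\varepsilon}(y)=\{y\}$, i.e.\ that every nearby point of $H$ violates some other constraint $X_{\btheta''}\cdot z\geq X_{\btheta''}\cdot\chi(\btheta'')$. Since the slack $X_{\btheta''}\cdot(\chi(\btheta)-\chi(\btheta''))$ vanishes to second order as $\btheta''\to\btheta$ along $\cM_{crit}$, this is a nondegenerate-curvature statement about the boundary which Lemma \ref{lem:invertible} does not give you and which you do not prove.

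The repair is short and keeps your strategy: run the open--closed argument in $\mathrm{int}(\mathcal{C}^*)$, which is convex, hence connected, and nonempty (it contains the image of a neighbourhood of any strictly subcritical parameter). Every $y\in\mathcal{C}_{image}\cap\mathrm{int}(\mathcal{C}^*)$ has, by Lemma \ref{lem:fromsupercrittosubcrit} applied to the tilting, a preimage with $\rho\leq 1$; that preimage cannot be critical, since critical images lie on the boundary hyperplane of one of the defining half-spaces of $\mathcal{C}^*$ and hence not in $\mathrm{int}(\mathcal{C}^*)$. So openness follows from the inverse function theorem at strictly subcritical parameters, closedness from Lemma \ref{lem:explicitboundonc}, and therefore $\mathrm{int}(\mathcal{C}^*)\subseteq\mathcal{C}_{image}$. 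Since $\mathcal{C}^*=\overline{\mathrm{int}(\mathcal{C}^*)}$ and $\mathcal{C}_{image}$ is closed (your first step), this yields $\mathcal{C}_{image}=\mathcal{C}^*$, which is closed and convex; at that point your opening reduction via coordinatewise convexity is no longer needed.
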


\begin{Proposition}\label{prop:bijection}
Assume that $\bmu$ is entire, finite, nondegenerate, nonlocalized and irreducible. Let $\partial \mathcal{C}_{image}$ be the boundary of $\mathcal{C}_{image}$. Then $\chi$ realizes a bijection between $\mathcal{M}_{\text{crit}}$ and the boundary $\partial \mathcal{C}_{image}$.    
\end{Proposition}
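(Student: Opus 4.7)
The plan is to prove three things: $\chi|_{\cM_{\text{crit}}}$ (a) takes values in $\partial\mathcal{C}_{image}$, (b) is injective, and (c) hits every point of $\partial\mathcal{C}_{image}$.

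For (a), I would use Lemma \ref{lem:resultonc1}(ii): if $\btheta\in\cM_{\text{crit}}$, then $\btheta$ maximizes $f_{X_\btheta}(\btheta')=-X_\btheta^{\intercal}\chi(\btheta')$ over $\R^K$, which is exactly to say that $\chi(\btheta)$ minimizes the linear functional $z\mapsto X_\btheta\cdot z$ over $\mathcal{C}_{image}$. Since $X_\btheta$ has strictly positive coordinates, this is a genuine supporting hyperplane and so $\chi(\btheta)\in\partial\mathcal{C}_{image}$. For (b), any two critical $\btheta,\btheta'$ with $\chi(\btheta)=\chi(\btheta')$ would both attain the maximum of $f_{X_\btheta}$, contradicting the \emph{unique} maximizer statement of Lemma \ref{lem:resultonc1}(ii), which is available because $\bmu$ is nonlocalized and therefore $f_{X_\btheta}$ is strictly concave.

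The real work is surjectivity (c). Let $y\in\partial\mathcal{C}_{image}$. Since $\mathcal{C}_{image}$ is closed (Proposition \ref{prop:convex}), $y=\chi(\btheta_0)$ for some $\btheta_0\in\R^K$, and the task is to show $\btheta_0\in\cM_{\text{crit}}$. I would argue by dichotomy on $\rho_{\btheta_0}$. If $\rho_{\btheta_0}<1$, every eigenvalue of $M_{\btheta_0}$ has modulus strictly less than $1$, so the Jacobian $J_\chi(\btheta_0)=M_{\btheta_0}-I_K$ is invertible; by the inverse function theorem, $\chi$ is then a local diffeomorphism at $\btheta_0$, which puts $y$ in $\mathrm{int}(\mathcal{C}_{image})$, contradicting $y\in\partial\mathcal{C}_{image}$. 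If $\rho_{\btheta_0}>1$, I would reduce to the previous case via Lemma \ref{lem:fromsupercrittosubcrit} applied to $\bmu_{\btheta_0}$: exponential tilting preserves supports, so $\bmu_{\btheta_0}$ inherits the hypotheses (entire, finite, nondegenerate, nonlocalized, irreducible) from $\bmu$. Writing $\mathbf{q}_i:=\log\Prob(|\cT^{\btheta_0,(i)}|<\infty)$, Lemma \ref{lem:fromsupercrittosubcrit} gives that $(\bmu_{\btheta_0})_{\mathbf{q}}=\bmu_{\btheta_0+\mathbf{q}}$ is subcritical, in fact strictly so since $\bmu_{\btheta_0}$ is nonlocalized and is not critical. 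A direct computation from the definition of $\chi$ shows that, for any $\btheta'$, $\chi_{\bmu_{\btheta_0}}(\btheta')=\chi(\btheta_0+\btheta')-\chi(\btheta_0)$; specializing at $\btheta'=\mathbf{q}$ and using the identity $\chi_{\bmu_{\btheta_0}}(\mathbf{q})=\bzero$ from the lemma yields $\chi(\btheta_0+\mathbf{q})=\chi(\btheta_0)=y$. The strictly subcritical case then applies to $\btheta_0+\mathbf{q}$ and again produces a contradiction. Therefore $\rho_{\btheta_0}=1$, i.e.\ $\btheta_0\in\cM_{\text{crit}}$, and $y\in\chi(\cM_{\text{crit}})$.

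The main obstacle is the strictly supercritical branch of the dichotomy: the Jacobian $M_{\btheta_0}-I_K$ there can perfectly well fail to be invertible (if $1$ is an eigenvalue of $M_{\btheta_0}$ lying below the Perron root $\rho_{\btheta_0}$), so the inverse function theorem is not available directly. The key trick is that Lemma \ref{lem:fromsupercrittosubcrit} produces a second preimage of $y$ under $\chi$ sitting in the strictly subcritical regime, where the Jacobian \emph{is} invertible and the easy argument takes over.
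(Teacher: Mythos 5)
Your proof is correct and follows essentially the same route as the paper: Lemma \ref{lem:resultonc1}(ii) gives both that $\chi(\cM_{\text{crit}})\subseteq\partial\mathcal{C}_{image}$ and injectivity, while surjectivity combines closedness of $\mathcal{C}_{image}$, local invertibility of $\chi$ at strictly subcritical parameters, and Lemma \ref{lem:fromsupercrittosubcrit} to reduce the supercritical case to the subcritical one. Your only addition is to spell out the translation identity $\chi_{\bmu_{\btheta_0}}(\btheta')=\chi(\btheta_0+\btheta')-\chi(\btheta_0)$ needed to apply Lemma \ref{lem:fromsupercrittosubcrit} to the tilted measure, a step the paper leaves implicit.
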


We start with a technical lemma that adds some quantitative information to Lemma \ref{lem:resultonc1}.

\begin{Lemma}\label{lem:directionstrongbound}
 Assume that $\bmu$ is entire, finite, nondegenerate, nonlocalized and irreducible. Let $\overline{\btheta}\in\cM_{crit}$. Then, there exists $a\in\R$ and $b>0$ such that:
 \[
\forall \, \btheta\in\R^K,\   X_{\overline{\btheta}}^{\intercal}\chi(\btheta)\geq a+b||\chi(\btheta)||_2.
\]
\end{Lemma}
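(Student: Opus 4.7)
The plan is to argue by contradiction, using compactness and Brouwer's invariance of domain.

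Suppose the conclusion fails; then for every $n\in\N$ one can find $\btheta_n\in\R^K$ such that
\[
X_{\overline{\btheta}}^{\intercal}\chi(\btheta_n) < -n + \tfrac{1}{n}\|\chi(\btheta_n)\|_2.
\]
Lemma \ref{lem:resultonc1}(ii) provides the pointwise lower bound $X_{\overline{\btheta}}^{\intercal}\chi(\btheta)\geq X_{\overline{\btheta}}^{\intercal}\chi(\overline{\btheta})$, which forces both $\|\chi(\btheta_n)\|_2\to\infty$ and $X_{\overline{\btheta}}^{\intercal}\chi(\btheta_n)/\|\chi(\btheta_n)\|_2\to 0$. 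Extracting a convergent subsequence from the unit vectors $v_n:=\chi(\btheta_n)/\|\chi(\btheta_n)\|_2$ produces a limit $v\in\R^K$ with $\|v\|_2=1$ and $X_{\overline{\btheta}}^{\intercal}v=0$. Applying Lemma \ref{lem:resultonc1}(ii) once more, now with $X_{\btheta'}$ in place of $X_{\overline{\btheta}}$ for an arbitrary $\btheta'\in\cM_{crit}$, yields $X_{\btheta'}^{\intercal}\chi(\btheta_n)\geq X_{\btheta'}^{\intercal}\chi(\btheta')$; dividing by $\|\chi(\btheta_n)\|_2$ and letting $n\to\infty$ will produce $X_{\btheta'}^{\intercal}v\geq 0$ for every $\btheta'\in\cM_{crit}$.

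The heart of the argument, and the main obstacle, is to upgrade these weak inequalities into the strict inequality $X_{\overline{\btheta}}^{\intercal}v>0$. The plan is to show that $X_{\overline{\btheta}}$ sits in the topological interior (inside $\R^K$) of the cone $C:=\{tX_{\btheta'}:\,t\geq 0,\,\btheta'\in\cM_{crit}\}$. Consider the map
\[
\psi:\cM_{crit}\to\Delta_{K-1}^{\circ},\qquad \btheta'\mapsto X_{\btheta'},
\]
where $\Delta_{K-1}^{\circ}$ denotes the open simplex $\{X\in(0,+\infty)^K:\|X\|_1=1\}$. This map is continuous, via the comatrix formula for the left eigenvector already spelt out in the proof of Proposition \ref{prop:opendirection2}, and injective by Corollary \ref{cor:asymp dir are not collin}. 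Since Proposition \ref{prop:open} ensures that $\cM_{crit}$ is a $(K-1)$-dimensional differentiable manifold and $\Delta_{K-1}^{\circ}$ is also a $(K-1)$-manifold, Brouwer's invariance of domain will imply that $\psi(\cM_{crit})$ is open in $\Delta_{K-1}^{\circ}$. Because $(t,X)\mapsto tX$ is a diffeomorphism from $(0,+\infty)\times\Delta_{K-1}^{\circ}$ onto $(0,+\infty)^K$ (with inverse $y\mapsto(\|y\|_1,y/\|y\|_1)$), the subset $\{tX:\,t>0,\,X\in\psi(\cM_{crit})\}$ of $C$ will be open in $\R^K$ and contain $X_{\overline{\btheta}}$.

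To conclude, pick $\varepsilon>0$ with $B_{\varepsilon}(X_{\overline{\btheta}})\subset C$. Every $w\in C$ writes as $w=tX_{\btheta'}$ for some $t\geq 0$ and $\btheta'\in\cM_{crit}$, so the previously derived inequalities give $w^{\intercal}v\geq 0$. Applied to $w:=X_{\overline{\btheta}}-\varepsilon v\in B_{\varepsilon}(X_{\overline{\btheta}})\subset C$ (using $\|v\|_2=1$), this produces $X_{\overline{\btheta}}^{\intercal}v\geq\varepsilon>0$, contradicting $X_{\overline{\btheta}}^{\intercal}v=0$ and yielding the desired linear lower bound.
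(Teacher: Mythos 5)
Your proof is correct, and it reaches the conclusion by a route that is structurally different from the paper's, although it runs on the same two engines: Lemma \ref{lem:resultonc1}(ii) (each critical $\btheta'$ gives the uniform bound $X_{\btheta'}^{\intercal}\chi(\btheta)\geq X_{\btheta'}^{\intercal}\chi(\btheta')$) and the openness of the set of asymptotic directions around $X_{\overline{\btheta}}$. The paper argues directly and quantitatively: using Proposition \ref{prop:opendirection2} it picks the $2K$ critical directions $X_{\overline{\btheta}}\pm\sqrt{K}\varepsilon e_i$, whose convex hull contains the ball $B_\varepsilon(X_{\overline{\btheta}})$, obtains a single constant $A$ with $Y^{\intercal}\chi(\btheta)\geq A$ for all $Y$ in that ball, and then plugs in $Y=X_{\overline{\btheta}}-\varepsilon\chi(\btheta)/||\chi(\btheta)||_2$, which yields explicit constants $a=A$, $b=\varepsilon$. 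You instead argue by contradiction and compactness of the sphere, extracting a limit unit vector $v$ with $X_{\overline{\btheta}}^{\intercal}v=0$ while $X_{\btheta'}^{\intercal}v\geq 0$ for every $\btheta'\in\cM_{crit}$, and then kill it by the same perturbation trick ($X_{\overline{\btheta}}-\varepsilon' v$ still lies in the asymptotic cone). Two remarks. First, your re-derivation of the openness of the cone via the normalized map $\psi$, Corollary \ref{cor:asymp dir are not collin}, Proposition \ref{prop:open} and invariance of domain is valid, but redundant: Proposition \ref{prop:opendirection2} already states that $\mathcal{D}_{asy}$ is open (and its proof is exactly the invariance-of-domain argument you sketch), so you could cite it and shorten this step considerably. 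Second, a cosmetic point: $X_{\overline{\btheta}}-\varepsilon v$ lies on the boundary of the open ball $B_\varepsilon(X_{\overline{\btheta}})$, not inside it, so take $\varepsilon'<\varepsilon$ (or pass to the closure of the cone by continuity of $w\mapsto w^{\intercal}v$); the contradiction $X_{\overline{\btheta}}^{\intercal}v\geq\varepsilon'>0$ is unaffected. The trade-off between the two proofs is that the paper's gives explicit constants $a,b$, whereas yours is purely existential, which is all the statement requires.
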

\begin{proof}
Denote by $(e_1, \ldots, e_K)$ the canonical basis of $\R^K$. By Proposition \ref{prop:opendirection2}, there exists $\varepsilon>0$ such that, for all $i\in[K]$, there exists $(\lambda_i^{\pm},\overline{\btheta}_i^{\pm})\in(0,+\infty)\times \cM_{crit}$ such that $\lambda_i^{\pm}X_{\overline{\btheta}_i^{\pm}}=X_{\overline{\btheta}}\pm \sqrt{K}\varepsilon e_i$.
In particular the Euclidean ball $B_{\varepsilon}(X_{\overline{\btheta}})$ is included in the convex hull of $\{\lambda_i^{\pm} X_{\overline{\btheta}_i^{\pm}}\}$, and hence any $Y \in B_{\varepsilon}(X_{\overline{\btheta}})$ is a linear combination of them with nonnegative coefficients. Combining it with Lemma \ref{lem:resultonc1} (ii), we get, letting $A:=\min\{\lambda_i^{\pm}X^{\intercal}_{\overline{\btheta}_i^{\pm}}\chi(\overline{\btheta}_i^{\pm})\}$:
\[
\forall Y\in B_{\varepsilon}(X_{\overline{\btheta}}), \ \forall \btheta\in\R^K, \ Y^{\intercal}\chi(\btheta)\geq A.
\]

By taking $Y=X_{\overline{\btheta}}-\varepsilon \frac{\chi(\btheta)}{||\chi(\btheta)||_2}$ we get:
\[
\forall \btheta\in\R^K, \ X_{\overline{\btheta}}^{\intercal}\chi(\btheta)\geq A+\varepsilon||\chi(\btheta)||_2.
\]
\end{proof}

We now show that, if $\chi(\btheta)$ is on the boundary of $\mathcal{C}_{\text{image}}$, then $\bmu_\btheta$ is critical.

\begin{Lemma}\label{lem:uniquegammadir}
Assume that $\bmu$ is entire, finite, nondegenerate, nonlocalized and irreducible. Fix $\Gamma\in\cM_{\ell,K}^*(\Z)$ and $\overline{\btheta}\in\R^K$ such that $\bzero\not\in\mathcal{D}_{asy}^{\Gamma,\overline{\btheta}}$. Fix $X\in(0,+\infty)^K$ such that $\Gamma X \not=\bzero$. Let $\btheta'\in\R^K$ and assume that $\btheta'$ maximizes $f_X: \btheta\mapsto -X^{\intercal} \chi(\btheta)$ on the set $\{\btheta\sim_{\Gamma}\overline{\btheta}\}$. Then, $\bmu_{\btheta'}$ is critical and there exists $\lambda>0$ such that $\Gamma X=\lambda\Gamma X_{\btheta'}$.\\

In the specific case where $\Gamma=I_K$, $\bmu_{\btheta'}$ is critical and its asymptotic direction is $X$ (up to a multiplicative constant).
\end{Lemma}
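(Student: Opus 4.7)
The plan is to combine a Lagrange-multiplier / first-order optimality condition at $\btheta'$ with the Perron--Frobenius theorem and the ``tilting by extinction probabilities'' construction of Lemma~\ref{lem:fromsupercrittosubcrit}. In a nutshell, the first-order condition forces $X+w$, for some $w\in\ker\Gamma$, to be a left $1$-eigenvector of $M_{\btheta'}$; Perron--Frobenius then identifies $X+w$ with a scalar multiple of $X_{\btheta'}$; and a short computation, exploiting the orthogonality $\ker\Gamma\perp\text{Im}(\Gamma^{\intercal})$, yields the positivity of that scalar. The delicate part is ruling out the possibility that $\rho_{\btheta'}\neq 1$.

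I would fix a surjective linear map $P:\R^K\to\R^{K-r}$ (where $r=\text{rank}(\Gamma)\geq 1$) with $\ker P=\text{Im}(\Gamma^{\intercal})$, so that the constraint set $\{\btheta\sim_{\Gamma}\overline{\btheta}\}$ is exactly the level set $\{P\chi(\btheta)=P\chi(\overline{\btheta})\}$. I first argue that $\bmu_{\btheta'}$ must be critical. If $\rho_{\btheta'}<1$, then $J_\chi(\btheta')=M_{\btheta'}-I_K$ is invertible, so the constraint gradient $PJ_\chi(\btheta')$ has full rank $K-r$ and constraint qualification holds; the standard Lagrange theorem then yields $w\in\ker\Gamma$ (of the form $P^{\intercal}\mu$) with $J_\chi(\btheta')^{\intercal}(X+w)=\bzero$, and invertibility forces $X+w=\bzero$, whence $\Gamma X=\Gamma(-w)=\bzero$, contradicting the hypothesis. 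If instead $\rho_{\btheta'}>1$, I would apply Lemma~\ref{lem:fromsupercrittosubcrit} to the tilted family $\bmu_{\btheta'}$, which inherits finiteness, entireness, nondegeneracy, nonlocalization and irreducibility from $\bmu$: this produces $\mathbf{q}'\in(-\infty,0)^K\setminus\{\bzero\}$ with $\chi(\btheta'+\mathbf{q}')=\chi(\btheta')$ and $\bmu_{\btheta'+\mathbf{q}'}$ strictly subcritical (the ``iff'' clause of Lemma~\ref{lem:fromsupercrittosubcrit} rules out criticality). Setting $\btheta'':=\btheta'+\mathbf{q}'$, one has $\btheta''\in\{\btheta\sim_{\Gamma}\overline{\btheta}\}$ and $f_X(\btheta'')=f_X(\btheta')$, so $\btheta''$ is also a maximizer; the subcritical case applied at $\btheta''$ produces the same contradiction. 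Hence $\bmu_{\btheta'}$ is critical.

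For the main conclusion, Lemma~\ref{lem:0inDasy} combined with $\bzero\notin\mathcal{D}_{asy}^{\Gamma,\overline{\btheta}}$ forces $\Gamma X_{\btheta'}\neq\bzero$, so $X_{\btheta'}\notin\ker\Gamma$, meaning that the (simple, by Perron--Frobenius) left $1$-eigenspace $\R X_{\btheta'}$ of $M_{\btheta'}$ meets $\ker\Gamma$ only at $\bzero$. This is constraint qualification at $\btheta'$, so Lagrange multipliers once more produce $w\in\ker\Gamma$ with $(X+w)^{\intercal}M_{\btheta'}=(X+w)^{\intercal}$; hence $X+w=\lambda X_{\btheta'}$ for some $\lambda\in\R$, and applying $\Gamma$ gives $\Gamma X=\lambda\Gamma X_{\btheta'}$. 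To verify $\lambda>0$, I would pick any $\btheta_0\in\{\btheta\sim_{\Gamma}\overline{\btheta}\}\setminus\{\btheta'\}$; such a point exists because constraint qualification makes this set a smooth $r$-dimensional manifold near $\btheta'$, with $r\geq 1$. Using $\chi(\btheta_0)-\chi(\btheta')\in\text{Im}(\Gamma^{\intercal})=(\ker\Gamma)^{\perp}$ to drop the $w$-contribution, one computes
\begin{equation*}
f_X(\btheta_0)-f_X(\btheta')=-(\lambda X_{\btheta'}-w)\cdot\bigl(\chi(\btheta_0)-\chi(\btheta')\bigr)=-\lambda\, X_{\btheta'}\cdot\bigl(\chi(\btheta_0)-\chi(\btheta')\bigr).
\end{equation*}
The left-hand side is $\leq 0$ by maximality of $\btheta'$, and Lemma~\ref{lem:resultonc1}(ii) (using nonlocalization) says $\btheta'$ is the unique global minimizer of $\btheta\mapsto X_{\btheta'}\cdot\chi(\btheta)$, so the factor $X_{\btheta'}\cdot\bigl(\chi(\btheta_0)-\chi(\btheta')\bigr)$ on the right is strictly positive. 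Hence $\lambda\geq 0$, and $\lambda=0$ would yield $X=-w\in\ker\Gamma$, contradicting $\Gamma X\neq\bzero$; thus $\lambda>0$.

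The main obstacle, to my mind, is the reduction in the supercritical case: one needs to verify that Lemma~\ref{lem:fromsupercrittosubcrit} genuinely applies to the tilted family $\bmu_{\btheta'}$ (in particular, that all extinction probabilities lie strictly in $(0,1)$ under strict supercriticality plus irreducibility, so that $\mathbf{q}'\in(-\infty,0)^K\setminus\{\bzero\}$), and that the resulting point $\btheta''$ really lies in the constraint set and is itself a maximizer---both consequences of the identity $\chi(\btheta'+\mathbf{q}')=\chi(\btheta')$. The specific case $\Gamma=I_K$ then falls out for free: $\ker I_K=\{\bzero\}$ forces $w=\bzero$, so $X=\lambda X_{\btheta'}$, and the asymptotic direction of $\bmu_{\btheta'}$ is exactly $X$ up to the normalization constant $\lambda=\|X\|_1$.
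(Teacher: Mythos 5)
Your proof is correct, and it follows a genuinely different route from the paper's for two of the three steps. For the subcritical case, the paper argues directly: local invertibility of $\chi$ at $\btheta'$ (since $M_{\btheta'}-I_K$ is invertible) lets one perturb $\chi(\btheta')$ within $\text{Im}(\Gamma^{\intercal})$ in a direction that strictly increases $f_X$, contradicting maximality (there is a sign slip in the paper's display at that point, but the conclusion is unaffected). You instead invoke Lagrange multipliers with the explicit constraint map $P\chi$; I checked that this yields $J_\chi(\btheta')^{\intercal}(X+w)=\bzero$ with $w\in\ker\Gamma$ and forces $\Gamma X=\bzero$, which is clean. The supercritical reduction via Lemma~\ref{lem:fromsupercrittosubcrit} is the same in both proofs, and your worries there are unfounded: all the listed properties of $\bmu$ are invariant under tilting (support is preserved, and entireness is an immediate consequence of $\phi^{(i)}_{\btheta'}(x)=\phi^{(i)}(xe^{\btheta'})/\phi^{(i)}(e^{\btheta'})$), the identity $\chi(\btheta'+\mathbf{q}')=\chi(\btheta')$ follows from $\chi^{\btheta'}(\cdot)=\chi(\cdot+\btheta')-\chi(\btheta')$, and the ``iff'' clause gives strict subcriticality of $\btheta''$. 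For the collinearity step, the paper takes a perturbative route via Proposition~\ref{prop:opendirection2}: one produces a nearby critical $\overline{\btheta}'\sim_\Gamma\btheta'$ with $\lambda\Gamma X_{\overline{\btheta}'}=\Gamma X_{\btheta'}+\varepsilon\Gamma X$, and a three-way comparison using Lemma~\ref{lem:resultonc1}(ii) and the $\Gamma$-equivalence identity leads to a contradiction if $\Gamma X_{\btheta'}$ and $\Gamma X$ are not collinear. You again reach the same conclusion via a second Lagrange argument (constraint qualification at a critical $\btheta'$ being equivalent to $\Gamma X_{\btheta'}\neq\bzero$, which holds since $\bzero\notin\mathcal{D}^{\Gamma,\overline{\btheta}}_{asy}$), identifying $X+w$ as a left $1$-eigenvector. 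A nice bonus of your route is the explicit positivity computation $f_X(\btheta_0)-f_X(\btheta')=-\lambda X_{\btheta'}^{\intercal}(\chi(\btheta_0)-\chi(\btheta'))\leq 0$ with the inner factor strictly positive by Lemma~\ref{lem:resultonc1}(ii): the paper's proof leaves $\lambda>0$ implicit, whereas you establish it in two lines, and the case $\lambda=0$ is ruled out by $\Gamma X\neq\bzero$. The upshot: same reduction structure (subcritical by first-order contradiction, supercritical via extinction-probability tilting, then collinearity via a simple left-eigenspace argument), but your Lagrange framework unifies the first and third steps and makes the positivity of the scalar transparent.
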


\begin{proof}
Assume that $\btheta'$ is subcritical but not critical. The Jacobian of $\chi$ at $\btheta'$ is $M_{\btheta'}-I_K$, which is invertible. Hence, $\chi$ is locally invertible around $\btheta'$. Now, let $Y$ be such that $Y^{\intercal}\Gamma X >0$. There exists $\varepsilon>0$ and $\btheta'' \in \R^K$ such that $\chi(\btheta'')=\chi(\btheta')-\varepsilon \Gamma^{\intercal}Y$. This means that $\btheta''\sim_{\Gamma}\btheta'$ and $f_X(\btheta'')=f_X(\btheta')-\varepsilon X^{\intercal}\Gamma^{\intercal}Y<f_X(\btheta')$.  Hence, $\btheta'$ cannot be a maximizer of $f_X$ on the set $\{\btheta\sim_{\Gamma}\overline{\btheta}\}$.

Now, assume that $\btheta'$ is supercritical but not critical. By Lemma \ref{lem:fromsupercrittosubcrit}, there exists a subcritical (but not critical) $\btheta''$ such that $\chi(\btheta'')=\chi(\btheta')$ and in particular $\btheta'\sim_{\Gamma}\btheta''$. By the previous argument, $\btheta''$  cannot be a maximizer of $f_X$, so $\btheta'$ cannot either.
In the end, necessarily $\btheta'$ is critical.

Now let $X_{\btheta'}$ be the asymptotic direction of $\btheta'$. By Proposition \ref{prop:opendirection2}, for any $\varepsilon>0$ small enough, there exists $\lambda>0$ and $\overline{\btheta}^{\prime}\sim_{\Gamma}\btheta'$ such that $\bmu_{\overline{\btheta}^{\prime}}$ is critical and $\lambda \Gamma X_{\overline{\btheta}^{\prime}}=\Gamma X_{\btheta'}+\varepsilon \Gamma X$. 
Assume that $\Gamma X_{\btheta'}$ and $\Gamma X$ are not collinear. Then $\btheta' \neq \overline{\btheta}'$.
Observe now that, by Lemma \ref{lem:resultonc1} (ii):
\begin{equation}\label{eq:ineg1et2}
   X_{\overline{\btheta}^{\prime}}^{\intercal}\chi(\overline{\btheta}^{\prime})< X_{\overline{\btheta}^{\prime}}^{\intercal}\chi(\btheta') \text{ and }
      X_{\btheta'}^{\intercal}\chi(\btheta')< X_{\btheta'}^{\intercal}\chi(\overline{\btheta}^{\prime}).
\end{equation}

\noindent By definition of $\Gamma$-equivalence, there exists a vector $R\in\R^{\ell}$ such that $\chi(\btheta')-\chi(\overline{\btheta}')=\Gamma^{\intercal}R$. This means that we have:
\begin{equation}\label{eq:usualGammatrick}
\lambda X_{\overline{\btheta}^{\prime}}^{\intercal}(\chi(\overline{\btheta}^{\prime})-\chi(\btheta'))=
  \lambda X_{\overline{\btheta}^{\prime}}^{\intercal}\Gamma^{\intercal}R=
   \big(X_{\btheta'}+\varepsilon X\big)^{\intercal}\Gamma^{\intercal}R=
   \big(X_{\btheta'}+\varepsilon X\big)^{\intercal}(\chi(\overline{\btheta}^{\prime})-\chi(\btheta')).
\end{equation}
Furthermore, by assumption
\begin{equation}\label{eq:ineg3}
X^{\intercal}\chi(\btheta')\leq X^{\intercal}\chi(\overline{\btheta}^{\prime}),
\end{equation}
so by using the second inequality of \ref{eq:ineg1et2} and the inequality \ref{eq:ineg3} in Equation \ref{eq:usualGammatrick} we get:
\[
  \lambda X_{\overline{\btheta}^{\prime}}^{\intercal}(\chi(\overline{\btheta}^{\prime})-\chi(\btheta'))   >0.
\]
which is incompatible with the first inequality of Equation \ref{eq:ineg1et2}. We thus get the desired result. \\

Assume now that $\Gamma=I_K$. Then, necessarily $\bzero\notin \mathcal{D}_{asy}^{I_K,\overline{\btheta}}=\mathcal{D}_{asy}$ by definition of the asymptotic direction. We therefore get the desired result.
\end{proof}

The next lemma provides an affine bound to the function $\chi$. It will be used to show that, for $\btheta, \btheta' \in \R^K$, if $\chi(\btheta)$ and $\chi(\btheta')$ are close to each other then $\btheta$ and $\btheta'$ cannot be too far apart.

\begin{Lemma}
\label{lem:explicitboundonc}
Assume that $\bmu$ is entire, nonlocalized and irreducible. Let $\overline{\btheta}$ be such that $\bmu_{\overline{\btheta}}$ is critical. Then, there exists $a\in\R$ and $b\in(0,+\infty)$ such that:
\[
\forall \btheta\in\R^K,\ \ X^{\intercal}_{\overline{\btheta}} \, \chi(\btheta) \geq a+b ||\btheta-\overline{\btheta}||_2.
\]
\end{Lemma}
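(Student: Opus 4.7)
The plan is to exploit the convexity of $g(\btheta) := X_{\overline{\btheta}}^{\intercal}\chi(\btheta) = -f_{X_{\overline{\btheta}}}(\btheta)$, combined with the fact that it attains its minimum at $\overline{\btheta}$ with positive definite Hessian there, in order to upgrade a local quadratic lower bound to a global linear one along every ray emanating from $\overline{\btheta}$.

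First, I would observe that $g$ is smooth on $\R^K$ (since $\bmu$ is entire) and convex, because each $\chi_i(\btheta) = \log\phi^{(i)}(e^\btheta) - \theta_i$ is convex (log-Laplace of $\mu^{(i)}$ minus a linear term) and the coefficients of $X_{\overline{\btheta}}$ are strictly positive by the Perron--Frobenius theorem applied to $\bmu_{\overline{\btheta}}$ (irreducibility is preserved under tilting). By Lemma \ref{lem:resultonc1} (ii) specialized to $X = X_{\overline{\btheta}}$, the function $f_{X_{\overline{\btheta}}} = -g$ has its unique maximizer at $\overline{\btheta}$ (uniqueness coming from nonlocalization), so $\overline{\btheta}$ is the unique minimizer of $g$. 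Moreover, by Lemma \ref{lem:resultonc1} (i), the Hessian of $-g = f_{X_{\overline{\btheta}}}$ at $\overline{\btheta}$ is negative definite, equivalently the Hessian $H_g(\overline{\btheta})$ is positive definite; let $\lambda_{\min} > 0$ denote its smallest eigenvalue.

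Next, I would establish a uniform quadratic lower bound on a small ball around $\overline{\btheta}$. Since $H_g$ is continuous, there exists $\varepsilon > 0$ such that for every $\xi \in \overline{B}_\varepsilon(\overline{\btheta})$ and every unit vector $u \in \R^K$, $u^{\intercal} H_g(\xi) u \geq \lambda_{\min}/2$. Taylor's theorem with integral remainder then yields, for every unit $u$ and every $t \in [0,\varepsilon]$,
$$
g(\overline{\btheta} + t u) - g(\overline{\btheta}) \;\geq\; \tfrac{\lambda_{\min}}{4}\, t^{2}.
$$
The uniformity in $u$ (the main — and only — subtlety of the argument) is ensured by compactness of the unit sphere $\mathbb{S}^{K-1}$, together with continuity of $H_g$.

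Finally, I would extend this local bound to a global linear one using convexity along rays. For any unit vector $u$, the map $h(t) := g(\overline{\btheta} + t u)$ is convex on $\R$ with minimum at $t=0$, so $t \mapsto (h(t) - h(0))/t$ is nondecreasing on $(0,+\infty)$. From the quadratic bound at $t = \varepsilon$ we get $(h(\varepsilon)-h(0))/\varepsilon \geq \tfrac{\lambda_{\min}}{4}\varepsilon$, and hence by monotonicity of the slope, for every $t \geq \varepsilon$:
$$
h(t) \;\geq\; h(0) + \tfrac{\lambda_{\min}}{4}\varepsilon \cdot t.
$$
For $t \in [0,\varepsilon]$ we trivially have $h(t) \geq h(0) \geq h(0) + \tfrac{\lambda_{\min}}{4}\varepsilon\, t - \tfrac{\lambda_{\min}}{4}\varepsilon^{2}$. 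Combining both regimes, for every $\btheta \in \R^K$,
$$
g(\btheta) \;\geq\; g(\overline{\btheta}) - \tfrac{\lambda_{\min}}{4}\varepsilon^{2} + \tfrac{\lambda_{\min}}{4}\varepsilon\, \|\btheta - \overline{\btheta}\|_2,
$$
so the conclusion follows with $b := \tfrac{\lambda_{\min}\varepsilon}{4}$ and $a := g(\overline{\btheta}) - \tfrac{\lambda_{\min}\varepsilon^{2}}{4}$. The only real obstacle is the uniformity of the quadratic bound over directions $u$, which as noted is handled by compactness of the unit sphere.
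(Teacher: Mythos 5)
Your proof is correct, and its overall skeleton matches the paper's: obtain a strictly positive gap for $g:=X_{\overline{\btheta}}^{\intercal}\chi$ on a sphere centred at $\overline{\btheta}$, then use convexity along rays (monotonicity of difference quotients) to upgrade it to linear growth in $\|\btheta-\overline{\btheta}\|_2$. Where you differ is in how the gap on the sphere is produced. The paper takes $b:=\inf_{\btheta\in S^{K-1}}\bigl(f_{X_{\overline{\btheta}}}(\overline{\btheta})-f_{X_{\overline{\btheta}}}(\overline{\btheta}+\btheta)\bigr)$ and gets $b>0$ directly from continuity of $f_{X_{\overline{\btheta}}}$, compactness of the unit sphere, and the uniqueness of the maximizer from Lemma \ref{lem:resultonc1} (ii); no second-order information is used. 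You instead invoke the negative definiteness of the Hessian of $f_{X_{\overline{\btheta}}}$ from Lemma \ref{lem:resultonc1} (i), continuity of the Hessian, and Taylor's theorem to get a uniform quadratic bound on a small ball, which also silently uses $\nabla g(\overline{\btheta})=\bzero$ (true, since $\overline{\btheta}$ is critical, see \eqref{eq:maximizer}, or simply because it is a smooth interior minimizer) — this is fine but strictly more machinery than needed. The paper's route is more economical and works on the radius-$1$ sphere directly; yours buys a more explicit constant $b=\lambda_{\min}\varepsilon/4$ tied to the smallest Hessian eigenvalue at $\overline{\btheta}$, at the cost of the extra smoothness/Taylor bookkeeping. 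Both arguments rest on the same two pillars (Lemma \ref{lem:resultonc1} and convexity of $g$), so the proofs are interchangeable.
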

\begin{proof}
Let $$b:=\inf_{\btheta\in S^{K-1}} \left(f_{X_{\overline{\btheta}}}(\overline{\btheta})-f_{X_{\overline{\btheta}}}(\overline{\btheta}+\btheta)\right),$$ 
where $S^{K-1}$ is the unit sphere for $||\cdot||_2$ in $\R^K$.
As $\overline{\btheta}$ is the unique global maximizer of $f_{X_{\overline{\btheta}}}$, $f_{X_{\overline{\btheta}}}$ is continuous and $S^{K-1}$ is compact, we get that $b>0$.
For any $\btheta$ such that $||\btheta||_2\leq 1$, we have:
\[
f_{X_{\overline{\btheta}}}(\overline{\btheta}+\btheta)\leq f_{X_{\overline{\btheta}}}(\overline{\btheta}) \leq f_{X_{\overline{\btheta}}}(\overline{\btheta}) - b (||\btheta||_2-1).
\]
On the other hand, by Lemma \ref{lem:resultonc1} (i), the function $f_{X_{\overline{\btheta}}}: \btheta \mapsto -X^{\intercal}_{\overline{\btheta}} \chi(\btheta)$ is strictly concave. Thus, for any $\btheta$ such that $||\btheta||_2 > 1$, we get:
\begin{align*}
  ||\btheta||_2^{-1}  f_{X_{\overline{\btheta}}}(\overline{\btheta}+\btheta) + (1-||\btheta||_2^{-1}) f_{X_{\overline{\btheta}}}(\overline{\btheta}) 
  \leq f_{X_{\overline{\btheta}}}\left(\overline{\btheta}+||\btheta||_2^{-1} \btheta\right) 
  \leq f_{X_{\overline{\btheta}}}(\overline{\btheta})-b,
\end{align*}
which implies
\begin{align*}
f_{X_{\overline{\btheta}}}(\overline{\btheta}+\btheta)\leq f_{X_{\overline{\btheta}}}(\overline{\btheta}) - b ||\btheta||_2 \leq f_{X_{\overline{\btheta}}}(\overline{\btheta}) - b (||\btheta||_2-1).
\end{align*}

Setting $a:=-f_{X_{\overline{\btheta}}}(\overline{\btheta})-b$, we get the result.
\end{proof}

We now turn to the proof of Proposition \ref{prop:convex}.

\begin{proof}[Proof of Proposition \ref{prop:convex}]
We first show that the set $\mathcal{C}_{image}$ is closed. Let $(\btheta_n)_{n\in\N}$ be a sequence such that $\chi(\btheta_n)$ converges to some $\overline{\chi}$. Fix also $\btheta_c\in\mathcal{M}_{\text{crit}}$. By Lemma \ref{lem:explicitboundonc}, there exist constants $a,b$ with $b>0$, depending only on $\btheta_c$, such that:
\begin{equation}\label{eq:uselessstuff1}
X_{\btheta_c}^{\intercal}\chi(\btheta_n)\geq a + b ||\btheta_n-\btheta_c||_2.
\end{equation}
Since the sequence $\chi(\btheta_n)$ converges, it is bounded. Hence, \eqref{eq:uselessstuff1} implies that the sequence $(\btheta_n)_{n\in\N}$ is also bounded. We can therefore extract a subsequence that converges to some limit $\overline{\btheta}$. By continuity of the function $\chi$, we have $\chi(\overline{\btheta})=\overline{\chi}$ and therefore the set $\mathcal{C}_{image}$ is closed.\\

We now prove that the set $\mathcal{C}_{image}$ is convex. Define the set $\mathcal{C}^-$ as the image by $\chi$ of the set of subcritical (but not critical) parameters $\btheta\in\R^K$:
\[
\mathcal{C}^-:=\{\chi(\btheta),\ \rho_{\btheta}<1\}.
\]
We first show that $\mathcal{C}^-$ is convex. To this end, let $\btheta_1,\btheta_2\in \R^K$ such that $\rho_{\btheta_1},\rho_{\btheta_2}<1$. We define the set $E$ as follows:
\[
E:=\{s\in [0,1] \ | \ \exists \btheta\in\R^k \text{ with } \rho_{\btheta}<1 \text{ such that } \chi(\btheta)= s\chi(\btheta_1)+(1-s)\chi(\btheta_2)\}.
\]
Our goal is to show that $E=[0,1]$. By definition, $\{0,1\}\subset E$. We will now show that $E$ is both open and closed in $[0,1]$, which will imply the desired result. First, $E$ is open in $[0,1]$ by the implicit function theorem, since the Jacobian of $\chi$ at any $\btheta \in \R^K$ is $M_{\btheta}-I_K$, which is invertible if $\rho_{\btheta}<1$. To show that $E$ is closed, we use Lemma \ref{lem:explicitboundonc}. Set $\btheta_c\in\mathcal{M}_{\text{crit}}$. We define $m_1,m_2$ as follows:
\[
\forall i\in\{1,2\},\ m_i:=X_{\btheta_c}^{\intercal}\chi(\btheta_i).
\]
In particular, for any $s,\btheta$ such that $\chi(\btheta)=s\chi(\btheta_1)+(1-s)\chi(\btheta_2)$ we have:
\[
X_{\btheta_c}^{\intercal}\chi(\btheta)=sm_1+(1-s)m_2.
\]
By Lemma \ref{lem:explicitboundonc}, there exist constants $a,b$ with $b>0$ depending only on $\btheta_c$ such that:
\[
sm_1+(1-s)m_2=X_{\btheta_c}^{\intercal}\chi(\btheta)\geq a + b ||\btheta-\btheta_c||_2.
\]
This implies that:
\begin{equation}
\label{eq:borne sup}
||\btheta-\btheta_c||_2 \leq \frac{\max(m_1,m_2) - a}{b}.
\end{equation}

Now, fix $s\in (0,1)$ and assume that there exist two sequences $(\btheta_n)_{n\in\N}$, $(s_n)_{n\in\N}$ with $\btheta_n$ subcritical for all $n\in\N$, such that \[
\forall n\in\N,\ s_n\in[0,1] \text{ and } \chi(\btheta_n)=s_n\chi(\btheta_1)+(1-s_n)\chi(\btheta_2),
\]
and such that $s_n\rightarrow s$ as $n \rightarrow \infty$. By \eqref{eq:borne sup}, the sequence $(\btheta_n)_{n\in\N}$ is bounded and we can extract a subsequence that converges to some $\btheta_{lim}$, which is subcitical by continuity of $\cM_{\btheta}$ and its eigenvalues. By continuity of $\chi$, we have $\chi(\btheta_{lim})=s\chi(\btheta_1)+(1-s)\chi(\btheta_2)$. Observe that $\bmu_{\btheta_s}$ cannot be critical. Indeed $\btheta_s$ cannot be the unique maximizer of $f_{X_{\btheta_s}}$ as $f_{X_{\btheta_s}}(\btheta_s)=sf_{X_{\btheta_s}}(\btheta_1)+(1-s)f_{X_{\btheta_s}}(\btheta_2)$, which contradicts Lemma \ref{lem:resultonc1} (ii). From all this we get that $\mathcal{C}^-$ is convex.

We now define the two sets  $\mathcal{C}^+$ and $\mathcal{C}^1$ as follows:
\[
\begin{aligned}
\mathcal{C}^+:=&\{\chi(\btheta),\ \rho_{\btheta}>1\}\\
\mathcal{C}^1:=&\{\chi(\btheta),\ \rho_{\btheta}=1\}.
\end{aligned}
\]
We clearly have $\mathcal{C}_{image}=\mathcal{C}^-\cup\mathcal{C}^1\cup\mathcal{C}^+$. Furthermore, writing $\overline{\mathcal{C}^-}$ for the closure of $\mathcal{C}^-$, we claim that $\mathcal{C}^1\subset \overline{\mathcal{C}^-}$. Indeed by Proposition \ref{prop:open}, $\mathcal{M}_{\text{crit}}$ is a differentiable manifold of dimension $K-1$. Hence, for any $\btheta\in\mathcal{M}_{\text{crit}}$ there exists a sequence $(\btheta_n)_{n \in \N}$ converging to $\btheta$, with $\btheta_n\not\in\mathcal{M}_{\text{crit}}$ for all $n$. Therefore, by continuity of $\chi$, $\chi(\btheta_n)$ converges to $\chi(\btheta)$ and thus $\mathcal{C}^1\subset\overline{\mathcal{C}^-\cup\mathcal{C}^+}$. Since $\mathcal{C}^+\subset \mathcal{C}^-$ by Lemma \ref{lem:fromsupercrittosubcrit}, we have $\mathcal{C}^1\subset\overline{\mathcal{C}^-}$ and thus $\mathcal{C}_{image}\subset \overline{\mathcal{C}^-}$. Since $\mathcal{C}_{image}$ is closed, we have the equality $\mathcal{C}_{image}= \overline{\mathcal{C}^-}$. Finally, the closure of a convex set is convex, so $\mathcal{C}_{image}$ is convex. 
\end{proof}

We finally prove Proposition \ref{prop:bijection}.

\begin{proof}[Proof of Proposition \ref{prop:bijection}]
First, by Proposition \ref{prop:convex}, the set $\mathcal{C}_{image}$ is closed. By Lemma \ref{lem:resultonc1} (ii), if $\btheta\in\mathcal{M}_{\text{crit}}$, then $\chi(\btheta)$ is not in the interior of $\mathcal{C}_{image}$ and therefore it belongs to $\partial \mathcal{C}_{image}$. Conversely by Lemma \ref{lem:fromsupercrittosubcrit}, if $\btheta\not\in\mathcal{M}_{\text{crit}}$ there exists a subcritical but not critical $\btheta'$ such that $\chi(\btheta')=\chi(\btheta)$. Then, since the Jacobian of $\chi$ at $\btheta'$ is $M_{\btheta'}-I_K$ and $\rho_{\btheta'}<1$, $\chi$ is locally invertible at $\btheta'$ and therefore $\chi(\btheta')$ is in the interior of $\mathcal{C}_{image}$. From this we get that $\chi(\mathcal{M}_{\text{crit}})=\partial \mathcal{C}_{image}$ and $\mathcal{M}_{\text{crit}}=\chi^{-1}(\partial \mathcal{C}_{image})$. Finally, by Lemma \ref{lem:resultonc1} (ii), $\chi$ restricted to $\mathcal{M}_{\text{crit}}$ is injective.
\end{proof}

\section{Proof of the main result}
\label{sec:proof}

It turns out that, when $rk(\Gamma) \geq 2$, by Proposition \ref{prop:open} there may not be uniqueness of the critical $\Gamma$-equivalent exponential tilting of a given distribution $\bmu$. However, we have proved (see Corollary \ref{cor:asymp dir are not collin}) that these critical tiltings are characterized by their asymptotic directions, in the sense that two such tiltings necessarily have different asymptotic directions.

The goal of this section is to characterize which directions are possible asymptotic directions of a critical $\Gamma$-equivalent tilting of $\bmu$, and for which parameters $\btheta$. This will help us prove Theorem \ref{thm:accessdir2}.

Let us first define the notion of accessible (asymptotic) direction.

\begin{Definition}\label{def:accessible}
Fix a projection $\bmu$, and let $I\subseteq [K]$ be the set of types that have no child under $\bmu$ with positive probability (that is, $I=\{ i \in [K], \mu^{(i)}(\bzero)>0 \}$). Let $X\in[0,+\infty)^K\backslash\{\bzero\}$. We say that the direction $X$ is:
\begin{itemize}
\item \textbf{accessible} if there exists a tree $T$ such that
\begin{itemize}
    \item $\Prob_{\bmu}(\cT=T)>0$;
    \item $T$ has at least $2$ vertices;
    \item the root of $T$ has type $i\in I$;
    \item at least one of its leaves has the same type $i$ as the root;
    \item the vectors $\tilde{\bN}(T) := (\tilde{N}_1(T),\dots,\tilde{N}_K(T))^{\intercal}$ and $X$ are collinear, where $\tilde{N}_j(T)$ is the number of nodes of type $j$ in $T$ (counting the leaves but excluding the root).
\end{itemize}
\item \textbf{strongly accessible} if it is in the interior of the convex hull of the accessible directions.
\end{itemize}
We define $\mathcal{D}_{sad}$ as the set of strongly accessible directions (up to a positive multiplicative constant).
\end{Definition}

The set of strongly accessible directions turns out to be exactly the set of asymptotic directions (up to a multiplicative constant).

\begin{Theorem}
\label{thm:accessdir}
Assume that $\bmu$ is entire, finite, nondegenerate, nonlocalized and irreducible. Then, the set $\mathcal{D}_{asy}$ is both open and convex and $\mathcal{D}_{sad}=\mathcal{D}_{asy} $. 
\end{Theorem}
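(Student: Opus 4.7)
The plan is to establish, in sequence: (a) $\mathcal{D}_{asy}$ is open, (b) $\mathcal{D}_{asy}$ is convex, (c) $\mathcal{D}_{asy}\subseteq\mathcal{D}_{sad}$, and (d) $\mathcal{D}_{sad}\subseteq\mathcal{D}_{asy}$. Statement (a) is immediate from Proposition \ref{prop:opendirection2} applied with $\Gamma=I_K$ and $\overline{\btheta}=\bzero$: Proposition \ref{prop:Mcrit is nonempty} yields some $\btheta_c\in\cM_{crit}$, and by Perron-Frobenius $X_{\btheta_c}$ has positive coordinates, so we are not in the degenerate case $\mathcal{D}_{asy}^{I_K,\bzero}=\{\bzero\}$. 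For (b), pick $X_{\btheta_1},X_{\btheta_2}\in\mathcal{D}_{asy}$ and $\alpha\in(0,1)$, and set $X:=\alpha X_{\btheta_1}+(1-\alpha)X_{\btheta_2}$. The function $f_X=\alpha f_{X_{\btheta_1}}+(1-\alpha)f_{X_{\btheta_2}}$ is strictly concave by Lemma \ref{lem:resultonc1}(i), and combining Lemmas \ref{lem:directionstrongbound} and \ref{lem:explicitboundonc} yields $f_X(\btheta)\to-\infty$ as $\|\btheta\|_2\to\infty$. Hence $f_X$ admits a unique maximizer $\btheta^{*}$, identified by Lemma \ref{lem:uniquegammadir} (with $\Gamma=I_K$) as a critical point with $X_{\btheta^{*}}\propto X$, so $X\in\mathcal{D}_{asy}$.

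For (c), I will prove that accessible directions are dense in the open set $\mathcal{D}_{asy}$. Fix $X'=X_{\btheta'}\in\mathcal{D}_{asy}$; since $\bmu$ is finite and nondegenerate, the set $I:=\{i:\mu^{(i)}(\bzero)>0\}$ is nonempty, and I pick some $i_0\in I$. The tilted family $\bmu_{\btheta'}$ is critical, finite and irreducible with asymptotic direction $X_{\btheta'}$, and the tree $\cT^{\btheta',(i_0)}$ is almost surely finite. By standard concentration for multitype critical BGW trees, there exist arbitrarily large trees $T$ in the support of $\bmu_{\btheta'}$ (equivalently of $\bmu$, since tilting preserves supports) with root of type $i_0$, containing type-$i_0$ leaves by irreducibility, and with renormalized profile $\tilde{\bN}(T)/|T|$ arbitrarily close to a positive multiple of $X_{\btheta'}$. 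Each such $T$ witnesses an accessible direction close to $X'$, so accessible directions are dense in $\mathcal{D}_{asy}$; openness of $\mathcal{D}_{asy}$ around $X'$ then places $X'$ in the interior of the convex hull of accessible directions, i.e., $X'\in\mathcal{D}_{sad}$.

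For (d), take $X\in\mathcal{D}_{sad}$ and write $X=\sum_{k=0}^K\lambda_k X^{(k)}$ as a convex combination with $\lambda_k>0$ and accessible $X^{(k)}$'s chosen so that $X$ lies in the interior of the simplex they span. Let $T_k$ be a tree witnessing the accessibility of $X^{(k)}$, rooted at $i_k\in I$ and with a distinguished type-$i_k$ leaf; iterating by grafting nested copies of $T_k$ at that leaf produces trees $T_k^{(N)}$ in the support of $\bmu$ with $\bN(T_k^{(N)})=\bN(T_k)+(N-1)c_k X^{(k)}$, where $c_k>0$ satisfies $\tilde{\bN}(T_k)=c_k X^{(k)}$. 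The identity
\[
\log \Prob_{\btheta}(\cT^{(i_k)}=T_k^{(N)}) = \log \Prob(\cT^{(i_k)}=T_k^{(N)}) - \theta_{i_k} - \bN(T_k^{(N)}) \cdot \chi(\btheta),
\]
combined with the summability $\sum_{N\geq 1}\Prob_{\btheta}(\cT^{(i_k)}=T_k^{(N)})\leq 1$, forces the uniform lower bound $X^{(k)}\cdot\chi(\btheta)\geq C_k$ for each $k$. To upgrade these to coercivity of $f_X$, suppose by contradiction that some sequence $\btheta_n$ satisfies $\|\btheta_n\|_2\to\infty$ with $X\cdot\chi(\btheta_n)$ bounded. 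Lemma \ref{lem:explicitboundonc} gives $\|\chi(\btheta_n)\|_2\to\infty$, so a subsequential unit limit $d=\lim\chi(\btheta_n)/\|\chi(\btheta_n)\|_2$ satisfies $X^{(k)}\cdot d\geq 0$ for each $k$ and $X\cdot d\leq 0$, whence $X^{(k)}\cdot d=0$ for every $k$ (since $\lambda_k>0$). But interiorness of $X$ produces $\varepsilon>0$ such that $X+\varepsilon d$ is still in the simplex, i.e.\ $X+\varepsilon d=\sum_k\mu_k X^{(k)}$ with $\mu_k\geq 0$, which gives $\varepsilon\|d\|_2^2=(X+\varepsilon d)\cdot d=\sum_k\mu_k(X^{(k)}\cdot d)=0$, contradicting $\|d\|_2=1$. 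Thus $f_X$ is coercive and strictly concave, attaining its unique maximum at a critical $\btheta^{*}$ with $X_{\btheta^{*}}\propto X$ by Lemma \ref{lem:uniquegammadir}, so $X\in\mathcal{D}_{asy}$. The main obstacle is step (c): the concentration statement for the size-conditioned critical multitype BGW tree under $\bmu_{\btheta'}$, with the additional root type and same-type leaf constraints of accessibility, has to be controlled uniformly enough to produce accessible directions near every target in $\mathcal{D}_{asy}$; the coercivity upgrade in (d) is more technical but follows cleanly from the tree-iteration trick and the strict interior hypothesis as sketched above.
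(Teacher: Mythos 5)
Your overall architecture coincides with the paper's: openness from Proposition \ref{prop:opendirection2}, the inclusion $\mathcal{D}_{asy}\subseteq\mathcal{D}_{sad}$ by approximating a given asymptotic direction by nearby accessible directions in the coordinate directions and using openness, and $\mathcal{D}_{sad}\subseteq\mathcal{D}_{asy}$ by showing $f_X$ is coercive and invoking Lemma \ref{lem:uniquegammadir} with $\Gamma=I_K$. Two of your steps are genuine (and valid) variations. For convexity, you argue directly: $f_X$ with $X=\alpha X_{\btheta_1}+(1-\alpha)X_{\btheta_2}$ is strictly concave and coercive because Lemma \ref{lem:explicitboundonc} applied at $\btheta_1$ and $\btheta_2$ combines linearly (Lemma \ref{lem:directionstrongbound} is not even needed), so a maximizer exists and Lemma \ref{lem:uniquegammadir} identifies it as critical with direction proportional to $X$; the paper instead runs an open-and-closed connectedness argument on $[0,1]$ resting on the same Lemma \ref{lem:explicitboundonc} plus continuity of $\btheta\mapsto X_\btheta$ — your route is shorter and correct. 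In step (d) you rederive, correctly, the content of the paper's Lemmas \ref{lem:accessiblebound} and \ref{lem:accessiblestrongbound}: the iterated-grafting identity and summability give the uniform bounds $X^{(k)}\cdot\chi(\btheta)\geq C_k$, and your compactness/contradiction upgrade to coercivity replaces the paper's explicit bound $Y^{\intercal}\chi(\btheta)\geq a+b\|\chi(\btheta)\|_2$ on a ball; both work. One quibble: a point interior to the convex hull of the accessible directions need not lie in the interior of a \emph{simplex} with accessible vertices (think of the centre of a square), but your argument only uses that $X$ is interior to the convex hull of finitely many accessible $X^{(k)}$'s with a strictly positive convex representation, which is always achievable, so this is cosmetic.

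The genuine gap is in step (c), and you identify it yourself but underestimate it by calling it ``standard concentration.'' The statement you need — that near every $X_{\btheta'}$ with $\btheta'\in\cM_{crit}$ there are \emph{accessible} directions, i.e.\ finite trees of positive probability, rooted at some $i_0\in I$, with a leaf of the same type $i_0$, and with $\tilde{\bN}(T)$ nearly proportional to $X_{\btheta'}$ — is exactly Proposition \ref{prop:behaviourisasdir} and Corollary \ref{cor:accessible directions are close to asymptotics}, and the paper spends real work on it: the expectation identity $\E[N_j^{(1)}]=X_{\btheta}(j)/X_{\btheta}(1)$ (Lemma \ref{lem:expectationinblob}), the cycle lemma plus a moderate-deviation estimate (which requires checking exponential moments of the blob variables, via \cite{Mie08}), and, crucially, the existence of a subsequence of sizes $N$ along which $\Prob(N_1(\cT^{\btheta',(1)})=N)>0$ with only polynomial decay, imported from \cite{Ste18}; without this last input your phrase ``arbitrarily large trees in the support with profile close to a multiple of $X_{\btheta'}$'' is not justified. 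Moreover, your sketch conditions the tree to be rooted at $i_0\in I$ directly, whereas the concentration estimate is naturally proved for root type $1$ conditioned on $N_1$; the paper bridges the root-type and same-type-leaf constraints by concatenating the conditioned tree with an auxiliary finite tree $T_0$ rooted at $i_0$ (the leaf constraint itself is cheap, since every leaf type of a positive-probability tree lies in $I$), and some such device is needed in your write-up as well. So the proof is incomplete as it stands at precisely this point, though the intended route is the paper's.
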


After stating some preliminary results in Section \ref{ssec:asymptotic proportion of types}, we prove Theorem \ref{thm:accessdir} in Section \ref{ssec:proofpresquemain}, deduce from it Theorem \ref{thm:accessdir2}, and finally prove Corollary \ref{cor:coefsinz}.

\subsection{Asymptotic proportion of types}
\label{ssec:asymptotic proportion of types}

We start by showing that, in a $\bze_\btheta$-BGW tree conditioned on its number of root-type vertices, the number of vertices of each type is close to (a multiple of) the asymptotic direction $X_\btheta$.

\begin{Proposition}\label{prop:behaviourisasdir}
    Let $\bmu$ be entire, finite and irreducible. Let $\btheta \in \R^K$ such that $\bmu_\btheta$ is critical. Then, for any $\varepsilon>0$, any $N_0>0$, there exists $N>N_0$ such that $\Prob\left( N_1(\cT_n^{\btheta,(1)})=N\right)>0$ and, under the conditioning $N_1(\mathcal{T}^{\btheta,(1)})=N$, one has:
    \[
    \Prob_{\bmu_{\btheta}}\left(\forall i\in \{2,\dots,K\},\ \left\vert X_{\btheta}(1) N_i(\mathcal{T}^{\btheta,(1)})- X_{\btheta}(i) N_1(\mathcal{T}^{\btheta,(1)})\right\vert \leq \varepsilon N \, \, \big\vert \, N_1(\mathcal{T}^{\btheta,(1)})=N \right) \geq \frac{1}{2}.
    \]
Here, we recall that $\cT^{\btheta,(1)}$ denotes the tree $\cT^{\btheta}$ conditioned to have a root of type $1$.
\end{Proposition}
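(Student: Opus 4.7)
The strategy is to combine a decomposition of $\cT^{\btheta,(1)}$ along its type-$1$ vertices with Chebyshev-type concentration, exploiting the fact that $\bmu$ being entire implies that $\bmu_\btheta$ has finite moments of all orders (and in particular finite variance). The first step is to introduce the \emph{type-$1$ reduced tree} $\hat{\cT}$, obtained from $\cT^{\btheta,(1)}$ by retaining only the type-$1$ vertices and attaching each of them to its closest type-$1$ ancestor. By the branching property applied recursively at type-$1$ vertices, $\hat{\cT}$ is a monotype plane BGW tree whose offspring law $\hat{\zeta}$ is the law of the number of type-$1$ descendants of a type-$1$ vertex reachable without crossing another type-$1$ vertex. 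Using $X_\btheta^{\intercal} M_\btheta = X_\btheta^{\intercal}$, together with the fact that the restriction $\tilde M_\btheta$ of $M_\btheta$ to types $\{2,\dots,K\}$ has spectral radius strictly less than $1$ (by Perron-Frobenius for the irreducible matrix $M_\btheta$ with simple leading eigenvalue $1$ and strictly positive eigenvectors), one checks that $\hat{\zeta}$ has mean $1$ and finite variance. Since $|\hat{\cT}|=N_1(\cT^{\btheta,(1)})$ and $\hat{\cT}$ is a non-trivial critical monotype BGW tree, the support of $|\hat{\cT}|$ is cofinal in $\N$, which provides arbitrarily large $N > N_0$ with $\Prob(N_1(\cT^{\btheta,(1)})=N)>0$.

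Next, for each $u \in \hat{\cT}$ and each $j \neq 1$, let $L_u^{(j)}$ denote the number of type-$j$ vertices in the \emph{satellite forest} $F_u$ of non-type-$1$ descendants of $u$ reachable without crossing another type-$1$ vertex, so that $N_j(\cT^{\btheta,(1)}) = \sum_{u \in \hat{\cT}} L_u^{(j)}$. Given the offspring sequence $(K_u)_{u \in \hat{\cT}}$ of $\hat{\cT}$, the forests $(F_u)$ are independent, each being a conditioned subcritical multitype BGW forest on $\{2,\dots,K\}$; in particular each $L_u^{(j)}$ has finite variance bounded uniformly in $u$. A direct calculation with $(I_{K-1}-\tilde M_\btheta)^{-1}$ using the left eigenvector equation $X_\btheta^{\intercal} M_\btheta = X_\btheta^{\intercal}$ yields $\E[L_u^{(j)}] = X_\btheta(j)/X_\btheta(1)$. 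Conditionally on $(K_u)_u$, applying Chebyshev's inequality to $\sum_u \bigl(L_u^{(j)} - \E[L_u^{(j)} \mid K_u]\bigr)$ controls its fluctuations at scale $\sqrt N$, while classical size-conditioning results for critical monotype BGW trees with finite variance (in the spirit of Janson and Kortchemski) ensure that $\sum_u \E[L_u^{(j)} \mid K_u]$ remains within $O(\sqrt N)$ of $N \cdot X_\btheta(j)/X_\btheta(1)$ with high probability under $|\hat{\cT}| = N$. A Chebyshev bound and a union bound over $i\in\{2,\dots,K\}$ then give the desired lower bound of $\frac{1}{2}$ for $N$ sufficiently large in the support.

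The main technical difficulty is the coupling between the size conditioning $|\hat{\cT}|=N$ and the law of the satellite forests: conditioning on $|\hat{\cT}|$ biases the offspring distribution of $\hat{\cT}$, which in turn biases the conditional means $\E[L_u^{(j)}\mid K_u]$. This is a standard difficulty in size-conditioned monotype BGW theory and is handled under finite-variance assumptions, which are automatic here because $\bmu$ is entire, so the usual estimates transfer to our setting. An alternative, slightly heavier route that bypasses this coupling is to invoke a multitype local central limit theorem for critical irreducible BGW trees with finite variance, from which the Gaussian fluctuations of $(N_2,\dots,N_K)$ around $N \cdot (X_\btheta(2)/X_\btheta(1),\dots,X_\btheta(K)/X_\btheta(1))$ — and hence the conclusion — follow immediately.
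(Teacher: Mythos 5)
Your decomposition is essentially the same one the paper uses: satellite forests attached at type-$1$ vertices of the reduced monotype tree, the mean $\E[L_u^{(j)}]=X_\btheta(j)/X_\btheta(1)$ (this matches Lemma~\ref{lem:expectationinblob}), and concentration of the satellite counts. The difference lies in how the size-conditioning is handled, and this is where your argument has a genuine gap.

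The paper applies the cycle lemma to write $\Prob(N_1(\cT^{(1)})=N,\ N_i(\cT^{(1)})=\ell)$ exactly as $\frac{1}{N}\Prob\bigl(\sum_{p=1}^N\xi_1^{(p)}=N-1,\ \sum_{p=1}^N\xi_i^{(p)}=\ell\bigr)$ for an \emph{unconditioned} i.i.d.\ sequence of satellite vectors. This removes the coupling between the size constraint and the satellite counts at the level of the joint law, so one can use a moderate-deviation bound (the satellite vectors have exponential moments by Miermont's Prop.~4) to get that the deviation event has probability $O(e^{-BN})$; since $\Prob(N_1=N)\sim cN^{-3/2}$ by Stephenson, the conditional probability tends to $0$ by an exponential-versus-polynomial comparison. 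Your route — condition on the full offspring sequence $(K_u)_u$ of the reduced tree, Chebyshev on $\sum_u\bigl(L_u^{(j)}-\E[L_u^{(j)}\mid K_u]\bigr)$, and then control $\sum_u\E[L_u^{(j)}\mid K_u]$ separately — is the coupled version of the same idea, and you correctly flag the coupling as the difficulty. But that paragraph names the gap without filling it: (i) $\mathrm{Var}(L_u^{(j)}\mid K_u)$ depends on $K_u$ and is not obviously uniformly bounded, so $\sum_u\mathrm{Var}(L_u^{(j)}\mid K_u)=O(N)$ under the size-conditioning is itself a concentration statement about an additive functional of a conditioned tree; (ii) the $O(\sqrt N)$ control of $\sum_u\E[L_u^{(j)}\mid K_u]$ around $N X_\btheta(j)/X_\btheta(1)$ is precisely another such additive-functional concentration, and "in the spirit of Janson and Kortchemski" is not a proof. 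Both of these are exactly what the cycle lemma makes unnecessary, which is why the paper takes that route. Also note that plain (unconditional) Chebyshev would not suffice here even after the de-conditioning step: it gives $O(1/N)$ for the deviation at scale $\varepsilon N$, and dividing by $\Prob(N_1=N)\sim cN^{-3/2}$ would blow up; the moderate-deviation estimate, or a genuinely conditional argument, is needed. Your alternative suggestion (multitype local CLT) is viable and is close to what Stephenson's estimate relies on, but it is heavier than the cycle-lemma argument and would still need to be cited or proved in the aperiodic multitype setting.

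Smaller points, both correct: $\hat\zeta$ has mean $1$ by criticality, and exponential moments (so in particular all finite moments) by Miermont since $\bmu$ is entire; and cofinality of the support of $|\hat\cT|$ does give arbitrarily large admissible $N$, though the paper needs the sharper $\sim cN^{-3/2}$ asymptotics for the conditioning argument.
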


Let us immediately mention a useful corollary of this result.

\begin{Corollary}
    \label{cor:accessible directions are close to asymptotics}
Let $\btheta \in \cM_{crit}$, and fix $\varepsilon>0$. Then, there exists an accessible direction $X$ such that $||X-X_\btheta||_1 \leq \varepsilon$.  
\end{Corollary}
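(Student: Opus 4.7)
The plan is to build a specific tree $T$ that simultaneously witnesses accessibility and whose non-root type vector $\tilde{\bN}(T)$, once normalized, lies close to $X_\btheta$. The input will be Proposition \ref{prop:behaviourisasdir} applied to the critical tilting $\bmu_\btheta$, and the main extra work will be ensuring that $T$ contains a leaf of the root's type.

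First I would pick a type $i\in I=\{j\in[K]:\mu^{(j)}(\bzero)>0\}$; this set is nonempty because $\bmu$ is finite and nondegenerate. Since the definitions of $\cM_{crit}$, of the asymptotic direction $X_\btheta$, of accessible directions, and of the $\ell^1$-norm are all equivariant under permutations of $[K]$, I may assume without loss of generality that $i=1$. Next, I would apply Proposition \ref{prop:behaviourisasdir} with small parameter $\varepsilon'>0$ (to be fixed at the end) and large threshold $N_0$: this yields $N>N_0$ with $\Prob_{\bmu_\btheta}(N_1(\cT^{\btheta,(1)})=N)>0$ such that, conditionally on $\{N_1=N\}$, the bound $|X_\btheta(1)N_j-X_\btheta(j)N_1|\leq \varepsilon' N$ holds for every $j\in[K]$ with probability at least $1/2$. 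Any tree $T^*$ in this good event automatically satisfies that $\tilde{\bN}(T^*)/\|\tilde{\bN}(T^*)\|_1$ lies within $O(\varepsilon')$ of $X_\btheta$, so that for $\varepsilon'$ small and $N$ large the normalized vector is in $B_\varepsilon(X_\btheta)$.

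The core step is then to show that, conditionally on $\{N_1(\cT^{\btheta,(1)})=N\}$, the tree contains at least one leaf of type $1$ with conditional probability at least $3/4$ for $N$ large enough. Since exponential tilting preserves supports, $\mu^{(1)}_\btheta(\bzero)>0$, and one expects that in a typical conditioned tree a positive fraction of the $N$ type-$1$ vertices are leaves. A union bound with the good-proportions event of the previous paragraph then yields a joint event of positive conditional probability; picking any specific tree $T$ realizing it gives $\Prob_{\bmu_\btheta}(\cT^{\btheta,(1)}=T)>0$, and hence $\Prob_\bmu(\cT^{(1)}=T)>0$ since $\bmu$ and $\bmu_\btheta$ share the same support on finite trees. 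This $T$ has root of type $1\in I$, at least one type-$1$ leaf, at least two vertices (for $N\geq 2$), and the required proportion bound, so $X:=\tilde{\bN}(T)/\|\tilde{\bN}(T)\|_1$ is the desired accessible direction with $\|X-X_\btheta\|_1\leq\varepsilon$.

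The main obstacle is the leaf-concentration step: controlling $\Prob_{\bmu_\btheta}(L_1=0\mid N_1=N)$, where $L_1$ counts the type-$1$ leaves. I would attempt two routes. The first is a size-biased or spine-type argument: under $\bmu_\btheta$ conditioned on $\{N_1=N\}$, the empirical distribution of offspring configurations at type-$1$ vertices should converge in probability to $\mu^{(1)}_\btheta$, so with overwhelming probability at least one of the $N$ type-$1$ vertices has empty offspring. The second is to run the same local limit theorem machinery that yields Proposition \ref{prop:behaviourisasdir} on the enlarged functional $(N_1,L_1)$, deducing that $\Prob_{\bmu_\btheta}(L_1=0\mid N_1=N)\to 0$ as $N\to\infty$; this is cleanly compatible with the nonlocalized, irreducible, entire setting assumed throughout.
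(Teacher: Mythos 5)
Your overall strategy (force the root type to lie in $I$ by relabelling, then extract a tree from the good event of Proposition \ref{prop:behaviourisasdir}) is viable, and the peripheral points are fine: the relabelling is legitimate since nothing in the statement distinguishes type $1$, and the support of $\bmu_{\btheta}$ indeed coincides with that of $\bmu$ because $\bmu$ is entire. The genuine gap is exactly the step you yourself call the main obstacle: you never prove that, conditionally on $N_1(\cT^{\btheta,(1)})=N$, the tree contains a leaf of type $1$ with conditional probability bounded away from $1/2$. Both of your ``routes'' are assertions, not arguments, and route 1 (convergence of the empirical offspring distribution at type-$1$ vertices under the conditioning) is a statement of essentially the same difficulty as the claim it is meant to prove. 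The claim is true and can in fact be closed with the machinery already used in the proof of Proposition \ref{prop:behaviourisasdir}: by the cycle-lemma/blob decomposition, the event $\{N_1(\cT^{\btheta,(1)})=N,\ \text{no type-}1\text{ leaf}\}$ forces each of the $N$ i.i.d.\ blobs to be nontrivial, so its probability is at most $\frac{1}{N}\bigl(1-\mu^{(1)}_{\btheta}(\bzero)\bigr)^{N}$, which is exponentially small, while $\Prob\bigl(N_1(\cT^{\btheta,(1)})=N\bigr)\geq c\,N^{-3/2}$ along the subsequence used there; hence the conditional probability of having a type-$1$ leaf tends to $1$. But as written, this central step is missing, so the proposal is incomplete.

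It is worth noting how the paper avoids needing any such estimate. Every finite tree has at least one leaf, and any leaf necessarily has a type in $I$; by pigeonhole over the $K$ types, some fixed $i_0\in I$ satisfies, jointly with the proportion event of Proposition \ref{prop:behaviourisasdir}, a conditional probability at least $1/(2K)$. The mismatch between the root type ($1$) and the leaf type ($i_0$) is then repaired by concatenation: irreducibility yields a finite tree $T_0$ of positive probability, rooted at $i_0$ and containing a type-$1$ vertex, onto which the conditioned tree $T$ is grafted; the resulting tree has root and leaf of type $i_0$, and $\tilde{\bN}(T_0\cdot T)=\tilde{\bN}(T_0)+\tilde{\bN}(T)$ is still directionally $\varepsilon$-close to $X_{\btheta}$ once $N$ is large, since $\tilde{\bN}(T_0)$ is fixed. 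So the paper's route requires only facts already proved (plus irreducibility), whereas yours trades the concatenation for a new concentration estimate that you would have to supply, for instance along the lines sketched above.
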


\begin{proof}
Fix $\btheta\in\mathcal{M}_{\text{crit}}$. By Proposition \ref{prop:behaviourisasdir}, for any $\varepsilon>0$, there exists $i_0 \in I$ (the set of types $i \in [K]$ such that a vertex of type $i$ has zero child with positive probability) and a sequence $(J_n)_{n \geq 1}$ going to $+\infty$ such that, for all $n$, $\Prob(N_1(\cT_n^{(1)})=J_n)>0$ and
\begin{align}
\label{eq:Jn grand}
\Prob_{\bmu_\btheta}^{J_n} \bigg( \cT^{(1)} \text{ has a leaf of type } i_0 \text{ and } \forall i\in \{2,\dots,K\},\ \frac{\big\vert X_{\btheta}(1) N_i(\mathcal{T}^{(1)})- X_{\btheta}(i) N_1(\mathcal{T}^{(1)})\big\vert}{J_n} \leq \varepsilon \bigg) \geq \frac{1}{2K},
\end{align}
where $\Prob_\mu^J$ denotes the probability measure under $\bmu_\btheta$ conditioned on $\{N_1(\mathcal{T}^{(1)})=J\}$.

Now, by irreducibility of $\bmu$, there exists a finite tree $T_0$ with root of type $i_0$ and at least one vertex of type $1$, such that $\Prob(\cT^{(i_0)}=T_{0})>0$. For any tree $T$, we can look at the concatenation $T_0\cdot T$ (the tree obtained by replacing one of the leaves of type $i$ in $T_0$ by $T$) with root $i_0$. By \eqref{eq:Jn grand} for $J_n$ large enough, using the fact that $\tilde{\bN}(T_0\cdot T)=\tilde{\bN}(T_0)+\tilde{\bN}(T)$, we get that, for any $\varepsilon>0$, there exists $X$ accessible such that $||X-X_\btheta||_1<\varepsilon$.
\end{proof}

In order to prove Proposition \ref{prop:behaviourisasdir}, for all $j' \in [K]$, consider the nonconditioned tree $\cT^{\btheta,(j')}$ with root of type $j'$, and denote by $t^{(j')}$ its shape. For all $j \in \{2,\ldots,K\}$, we define
\begin{align*}
    N_j^{(j')} := \left|\left\{u \in \cT^{\btheta,(j')}, \be_{t^{(j')}}(u)=j, \forall v \in \rrbracket \varnothing,u \rrbracket, \be_t(v) \neq 1\right\} \right|, 
\end{align*}
the number of vertices of $\cT^{\btheta,(j')}$ with type $\neq 1$ having no ancestor of type $1$ except possibly the root of the tree.

\begin{Lemma}
\label{lem:expectationinblob}
Let $\bmu$ be entire, finite and irreducible. For all $2 \leq j \leq K$, we have
    \begin{align*}
        \E[N_j^{(1)}]=\frac{X_\btheta(j)}{X_\btheta(1)}.
    \end{align*}
\end{Lemma}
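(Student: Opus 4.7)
My plan is to compute $\E[N_j^{(1)}]$ by a standard ``many-to-one'' type decomposition according to the depth of the vertex, and then to identify the resulting sum with $X_\btheta(j)/X_\btheta(1)$ via the Perron--Frobenius eigenvector relation.

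Write $\tilde M_\btheta$ for the $(K-1) \times (K-1)$ principal submatrix of $M_\btheta$ obtained by removing the row and column indexed by type $1$, and let $v := (M_\btheta(1,2), \ldots, M_\btheta(1,K))^{\intercal} \in \R^{K-1}$. Since $j \neq 1$, the root $\varnothing$ (which has type $1$) does not contribute to $N_j^{(1)}$. Any other vertex $u$ contributing to $N_j^{(1)}$ lies at some depth $n \geq 1$ and, by definition, its ancestors $u_1, \ldots, u_{n-1}$ (excluding the root and $u$ itself) all have type different from $1$. The branching property of BGW trees therefore yields, for every $n \geq 1$,
\begin{equation*}
\E\bigl[\,\#\{u \in \cT^{\btheta,(1)} \text{ at depth } n \text{ contributing to } N_j^{(1)}\}\bigr] \;=\; \bigl[v^{\intercal}\,\tilde M_\btheta^{\,n-1}\bigr]_j,
\end{equation*}
so summing over $n$ gives $\E[N_j^{(1)}] = \bigl[v^{\intercal} \sum_{n \geq 0} \tilde M_\btheta^{\,n}\bigr]_j$.

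The next step is to justify that $I_{K-1} - \tilde M_\btheta$ is invertible and that the Neumann series converges. This comes from Perron--Frobenius applied to $M_\btheta$, which is irreducible (tilting preserves the support of $\bmu$, hence irreducibility) and critical, so $\rho(M_\btheta) = 1$ is a simple eigenvalue with strictly positive left eigenvector $X_\btheta$. A standard comparison argument (or the fact that a strict principal submatrix of an irreducible nonnegative matrix has strictly smaller spectral radius) gives $\rho(\tilde M_\btheta) < 1$, so $(I_{K-1} - \tilde M_\btheta)^{-1} = \sum_{n \geq 0} \tilde M_\btheta^{\,n}$ is well defined. Hence
\begin{equation*}
\E[N_j^{(1)}] \;=\; \bigl[v^{\intercal} (I_{K-1} - \tilde M_\btheta)^{-1}\bigr]_j.
\end{equation*}

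To identify this with $X_\btheta(j)/X_\btheta(1)$, I would split the identity $X_\btheta^{\intercal} M_\btheta = X_\btheta^{\intercal}$ along the first coordinate: for every $j \in \{2, \ldots, K\}$,
\begin{equation*}
X_\btheta(j) \;=\; X_\btheta(1)\,M_\btheta(1,j) + \sum_{i=2}^{K} X_\btheta(i)\,M_\btheta(i,j).
\end{equation*}
Denoting $\tilde X := (X_\btheta(2), \ldots, X_\btheta(K))^{\intercal}$, this rewrites as $\tilde X^{\intercal}(I_{K-1} - \tilde M_\btheta) = X_\btheta(1)\,v^{\intercal}$, so $v^{\intercal}(I_{K-1} - \tilde M_\btheta)^{-1} = \tilde X^{\intercal}/X_\btheta(1)$, and extracting the $j$-th coordinate concludes the proof.

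The only step requiring care is the justification that $\rho(\tilde M_\btheta) < 1$; everything else is essentially algebra once the right decomposition is in place. This comparison is classical for irreducible nonnegative matrices, but I would make it explicit since it is precisely what makes the geometric series converge and the linear system uniquely solvable.
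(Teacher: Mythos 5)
Your proof is correct, and it takes a recognizably different route from the paper's. The paper conditions on the first generation to obtain the one-step recursion $\E[N_j^{(j')}] = \mathds{1}_{j'=j} + \sum_{i=2}^K M_\btheta(j',i)\,\E[N_j^{(i)}]$ for all root types $j'$ simultaneously, packages this as a matrix fixed-point identity for $P=(\E[N_j^{(j')}])$, and then multiplies the whole identity on the right by $X_\btheta$ so that the eigenvector relation makes everything collapse without ever inverting a matrix. You instead expand over the depth of the contributing vertex, which turns $\E[N_j^{(1)}]$ into the Neumann series $v^{\intercal}\sum_{n\geq 0}\tilde{M}_\btheta^{\,n}$ and forces you to prove $\rho(\tilde{M}_\btheta)<1$ before solving the linear system explicitly. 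That extra spectral-radius step is real work, but it buys something the paper's argument glosses over: it shows a priori that the expectations $\E[N_j^{(1)}]$ are finite, which is needed for any recursion manipulation to be legitimate (for a critical tree the unrestricted progeny has infinite mean, so finiteness of these restricted counts is precisely the statement $\rho(\tilde{M}_\btheta)<1$, a standard consequence of Perron--Frobenius for a proper principal submatrix of an irreducible nonnegative matrix). Do make that inequality explicit as you propose, and note that irreducibility of $\bmu_\btheta$ (preserved by tilting, as you say) and criticality of $\btheta$ — i.e.\ $\btheta\in\cM_{crit}$, the setting in which $X_\btheta$ is defined — are the hypotheses actually used.
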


Although a proof can be found in \cite[Proof of Proposition 4]{Mie08}, we decided to give one here for completeness.

\begin{proof}
Observe that we have the following relations:
\begin{align*}
    \E[N_j^{(j')}] &= \mathds{1}_{j'=j} + \sum_{k_1, \ldots, k_K \geq 0} \mu_\btheta^{(j')}(k_1, \ldots, k_K) \sum_{i=2}^K \E[N_j^{(i)}]\\
    &= \mathds{1}_{j'=j} + \sum_{k_1, \ldots, k_K \geq 0} \mu_\btheta^{(j')}(k_1, \ldots, k_K) \sum_{i=1}^K k_i \E[N_j^{(i)}] - \sum_{k_1, \ldots, k_K \geq 0} \mu_\btheta^{(j')}(k_1, \ldots, k_K) k_1 \E[N_j^{(1)}]\\
    &= \mathds{1}_{j'=j} + \sum_{i=1}^K \E[N_j^{(i)}] M_\btheta(j',i) - \E[N_j^{(1)}] M_\btheta(j',1). 
\end{align*}

Now define the matrix $P:=\left( \E[N_j^{(j')}] \right)_{1 \leq j,j' \leq K}$, let $P^{(1)}$ be its first column, and let $M_\btheta^{\intercal,(1)}$ be the first row of $M_\btheta^\intercal$. We can write:
\begin{align*}
    P = I_K + P M_\btheta^\intercal - P^{(1)} M_\btheta^{\intercal,(1)}.
\end{align*}

Multiplying to the right by $X_\btheta$ and using that it is a $1$-left eigenvector of $M_\btheta$, we get
\begin{align*}
    PX_\btheta + P^{(1)} M_\btheta^{\intercal,(1)} X_\btheta &= X_\btheta + P M_\btheta^\intercal X_\btheta \\
    PX_\btheta + X_\btheta(1) P^{(1)} &= X_\btheta + PX_\btheta\\
    P^{(1)} = \frac{1}{X_\btheta^{(1)}} X_\btheta.
\end{align*}
The result follows.
\end{proof}

We can now prove Proposition \ref{prop:behaviourisasdir}.

\begin{proof}[Proof of Proposition \ref{prop:behaviourisasdir}]
In this proof, for convenience, we write $\Prob$ for $\Prob_{\bmu_{\btheta}}$. We know by e.g. ~\cite[Section $4.3.1$]{Ste18} that there exist a constant $c>0$ and an increasing sequence $(N_n)_{n\in\N}$ such that
\begin{align*}
    \Prob\left( N_1(\cT^{\btheta,(1)})=N_n \right) \underset{n \rightarrow \infty}{\sim} c N_n^{-3/2}.
\end{align*}
In the rest of the proof, we let $N$ go to $+\infty$ along such a subsequence. We use the ideas of \cite[Proposition 3.3 and Lemma 3.5]{ABST25+}. Consider a sequence of i.i.d. $K$-tuples $\left((\xi_j^{(p)})_{1 \leq j \leq K}\right)_{p \geq 1}$ of random variables distributed as $( N_j^{(1)})_{1 \leq j \leq K} $ and fix $i \in \{1, \ldots, K\}$. Using the cycle lemma (see e.g.~\cite[Lemma 15.3]{Jan12}), we have that, for all $\ell \geq 1$ such that $|\ell-N \frac{X_\btheta(i)}{X_\btheta(1)}|>\varepsilon N$:

\begin{align*}
    \Prob\left( N_1(\cT^{(1)})=N, N_i(\cT^{(1)})=\ell \right) &= \frac{1}{N} \Prob\left(\sum_{p=1}^{N} \xi_1^{(p)}=N-1, \sum_{p=1}^N \xi_i^{(p)} = \ell \right)\\
    &\leq \Prob\left( \sum_{p=1}^N \xi_i^{(p)} = \ell \right)\\
    &\leq \Prob\left( \sum_{p=1}^N \left( \xi_i^{(p)} - \E[\xi_i^{(p)}] \right)=\ell-N\frac{X_\btheta(i)}{X_\btheta(1)} \right) \text{(by Lemma \ref{lem:expectationinblob})}\\
    &\leq \Prob\left( \Big\vert\sum_{p=1}^N \left( \xi_i^{(p)} - \E[\xi_i^{(p)}] \right)\Big\vert\geq \Big\vert\ell-N\frac{X_\btheta(i)}{X_\btheta(1)}\Big\vert \right).
\end{align*}
The variables $(\xi_i^{(p)}-\E[\xi_i^{(p)}])$ are now centered and it can be checked (see \cite[Prop. 4(i),(iii)]{Mie08}) that $\xi_i^{(p)}$ has exponential moments of all order.  We conclude by a moderate deviation estimate (see e.g. \cite[Example 1.4]{RS15} or \cite[Lemma 3.5]{ABST25+}) that there exists $D>0$ such that, for any $\rho$ small enough, any $x>0$:
\begin{align*}
\Prob\left( \Big\vert\sum_{p=1}^N \left( \xi_i^{(p)} - \E[\xi_i^{(p)}] \right)\Big\vert\geq x \right) \leq \exp \left( D N \rho^2 - \rho x \right).
\end{align*}

Taking $\rho =\varepsilon (2D)^{-1}$, we get that

\begin{align}
\label{eq:grandes dev}
&\sum_{\big\vert\ell-N \frac{X_\btheta(i)}{X_\btheta(1)}\big\vert \geq \varepsilon N} \Prob\left( \Big\vert\sum_{p=1}^N \left( \xi_i^{(p)} - \E[\xi_i^{(p)}] \right)\Big\vert\geq \Big\vert\ell-N\frac{X_\btheta(i)}{X_\btheta(1)}\Big\vert \right) \nonumber \\
&\qquad \leq N \left( \frac{X_\btheta(i)}{X_\btheta(1)} -\varepsilon \right) \exp \left( - \frac{\varepsilon^2}{2D} N \right) + \sum_{a \geq 0} \exp\left( -\frac{\varepsilon^2}{2D} N - \frac{\varepsilon}{2D} a \right) \nonumber \\
&\qquad = O\left(\exp(-BN) \right),
\end{align}
for some $B>0$. The result follows.
\end{proof}

\subsection{Proof of Theorem \ref{thm:accessdir}}
\label{ssec:proofpresquemain}

Before proving Theorem \ref{thm:accessdir}, we need a couple of technical lemmas. We first show that we can bound uniformly from below the quantities $X^{\intercal}\chi(\btheta)$, for accessible directions $X$.

\begin{Lemma}\label{lem:accessiblebound}
Assume that $\bmu$ is entire and finite. Let $X\in[0,+\infty)^K$ be an accessible direction. Then, there exists a constant $m_X \in \R$ such that:
\[
\forall \btheta\in\R^K, \ X^{\intercal}\chi(\btheta) \geq m_X.
\]
\end{Lemma}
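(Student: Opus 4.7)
The plan is to exploit the self-similarity of the witness tree $T$: by iteratively concatenating copies of $T$ at a distinguished type-$i$ leaf, I will build a sequence of trees $T^{(n)}$ whose $\tilde{\bN}$-vector is $n$ times that of $T$, and then read off the desired lower bound on $X^{\intercal}\chi(\btheta)$ from the elementary fact that each $T^{(n)}$ has probability at most $1$ under every tilting.

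First, I will fix a tree $T$ witnessing the accessibility of $X$: root of type $i\in I$, a distinguished leaf of type $i$, and $\tilde{\bN}(T)=cX$ for some $c>0$. I set $T^{(1)}:=T$ and, recursively, define $T^{(n+1)}$ by identifying the distinguished type-$i$ leaf of $T^{(n)}$ with the root of a fresh copy of $T$, transferring the distinguished-leaf label to the type-$i$ leaf of that new copy. Each iteration preserves the existence of a type-$i$ leaf, so the construction is well-defined for every $n\geq 1$, and a straightforward induction yields
\[
\tilde{\bN}(T^{(n)}) \;=\; n\,\tilde{\bN}(T) \;=\; ncX.
\]
Moreover, splitting the probability at the replaced leaf (for any fixed $\bze$ with projection $\bmu$) gives
\[
\Prob(\cT^{(i)}=T^{(n)}) \;=\; \mu^{(i)}(\bzero)\,r^{n}, \qquad \text{where } r:=\Prob(\cT^{(i)}=T)/\mu^{(i)}(\bzero) \in (0,\infty).
\]

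Second, I will apply the exponential tilting formula at the level of individual trees. Unrolling the product of tilted offspring probabilities and regrouping factors by type, one checks that for any ordered tree $T'$ rooted of type $i$,
\[
\Prob(\cT^{\btheta,(i)}=T') \;=\; \Prob(\cT^{(i)}=T')\cdot \frac{e^{-\tilde{\bN}(T')^{\intercal}\chi(\btheta)}}{\phi^{(i)}(e^{\btheta})}.
\]
Specializing this identity to $T'=T^{(n)}$, substituting the two displays above, and using $\Prob(\cT^{\btheta,(i)}=T^{(n)})\leq 1$, a direct rearrangement gives
\[
ncX^{\intercal}\chi(\btheta) \;\geq\; \log\mu^{(i)}(\bzero) + n\log r - \log\phi^{(i)}(e^{\btheta}).
\]

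Finally, dividing through by $nc$ and letting $n\to\infty$ for each fixed $\btheta$ (for which $\phi^{(i)}(e^{\btheta})<\infty$ by entireness of $\bmu$) dilutes the last term to $0$ and yields
\[
X^{\intercal}\chi(\btheta)\;\geq\;\frac{\log r}{c}\;=:\; m_X,
\]
a constant depending on $X$ (via the witness $T$) but not on $\btheta$. The only non-routine point is this last step: a bound extracted from a single tree $T$ carries a $\btheta$-dependent remainder $-\log\phi^{(i)}(e^{\btheta})/c$ that cannot be controlled uniformly in $\btheta$, and the entire purpose of the self-similar iteration is to spread this remainder over $n$ copies so that it vanishes in the limit, leaving a genuinely $\btheta$-independent lower bound.
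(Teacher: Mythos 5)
Your proof is correct and follows essentially the same route as the paper: the same iterated concatenation of the witness tree $T$ at its distinguished type-$i$ leaf, the same exact tilting identity for the probability of a fixed tree, and the same conclusion from the bound $\Prob(\cT^{\btheta,(i)}=T^{(n)})\leq 1$ for all $n$ (the paper phrases the final step as the geometric ratio being at most $1$, which is equivalent to your dividing by $n$ and letting $n\to\infty$). The only cosmetic difference is the normalizing factor attached to the root in the tilting identity, which is $n$-independent and thus irrelevant in both arguments.
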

\begin{proof}
We use here the notation of Definition \ref{def:accessible}. Let $X$ be an accessible direction and fix $i\in I$. Let $T$ be a nontrivial tree such that $\tilde{\bN}(T)$ and $X$ are collinear, the root of $T$ is of type $i$, all the leaves of $T$ are of a type in $I$ and at least one is of type $i$. We arbitrarily choose one such leaf and call it $x$. Let $p>0$ be the probability of such a tree $T$ under $\bmu$ (that is, $p=\Prob(\cT^{(i)}=T)$), For all $n \geq 1$, let $T_n$ be the concatenation of $n$ such trees ($T_1=T$ and $T_{n+1}$ is obtained by grafting a copy of $T_n$ on $x$ in $T$), and define $p^{(n)} := \Prob(\cT^{(i)}=T_n)$. Let $p^0_i>0$ the probability that a node of type $i$ has no child. Then, we have $p^{(n)}=p^0_i \Big(\frac{p}{p^0_{i}}\Big)^n$. We also have by definition of $\chi$, setting $p^{(n)}_{\btheta}=\Prob_{\bmu_{\btheta}}(\cT^{\btheta,(i)}=T_n)$:
\[
p^{(n)}_{\btheta}=e^{-\theta_i}p^{(n)} e^{-n\tilde{\bN}(T)^{\intercal}\chi(\btheta)}=e^{-\theta_i}p^0_i\Big(\frac{p}{p^0_{i}}e^{-\tilde{\bN}(T)^{\intercal}\chi(\btheta)}\Big)^n.
\]
As this quantity must be smaller than $1$ for all $n \geq 1$, we have:
\[
\frac{p}{p^0_{i}}\leq e^{\tilde{\bN}(T)^{\intercal}\chi(\btheta)}
\]
This implies the desired result.
\end{proof}

We need a last result before proving Theorem \ref{thm:accessdir}.

\begin{Lemma}\label{lem:accessiblestrongbound}
Assume that $\bmu$ is entire and finite. Let $X\in(0,+\infty)^K\cap \mathcal{D}_{sad}$ be a strongly accessible direction. Then, there exists $a\in\R$ and $b,\varepsilon>0$ such that:
\[
\forall \btheta\in\R^K,\ \forall Y\in B_{\varepsilon}(X),\ Y^{\intercal}\chi(\btheta)\geq a+b||\chi(\btheta)||_2.
\]
\end{Lemma}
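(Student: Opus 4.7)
The plan is to upgrade the pointwise bound $X^{\intercal}\chi(\btheta)\geq m_X$ from Lemma \ref{lem:accessiblebound} into one that grows linearly in $||\chi(\btheta)||_2$, by exploiting the fact that the strongly accessible direction $X$ sits strictly inside the convex hull of accessible directions: this allows one to displace $X$ (or any $Y$ close to $X$) by a small controlled amount in the direction of $\chi(\btheta)/||\chi(\btheta)||_2$ while staying inside this convex hull. The mechanism parallels the one appearing in the proof of Lemma \ref{lem:directionstrongbound}, with accessible directions playing the role that asymptotic directions played there.

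First I would reduce to a finite family. Since $X$ lies in the interior of $\text{conv}(\text{accessible directions})$, a standard convexity argument---for instance, apply Carathéodory's theorem to each of the $2K$ perturbations $X\pm\delta e_i$ for $\delta>0$ small enough, and observe that the cross-polytope they span contains a Euclidean ball around $X$---produces finitely many accessible directions $X_1,\dots,X_N$ and some $\varepsilon>0$ such that $\overline{B_{2\varepsilon}(X)}\subset\text{conv}(X_1,\dots,X_N)$. Lemma \ref{lem:accessiblebound} then provides constants $m_{X_1},\dots,m_{X_N}\in\R$ with $X_i^{\intercal}\chi(\btheta)\geq m_{X_i}$ for every $\btheta\in\R^K$ and every $i$; set $a_0:=\min_i m_{X_i}$.

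Next, given $\btheta\in\R^K$ with $\chi(\btheta)\neq\bzero$ and $Y\in B_\varepsilon(X)$, set $u:=\chi(\btheta)/||\chi(\btheta)||_2$. Since $||(Y-\varepsilon u)-X||_2\leq||Y-X||_2+\varepsilon\leq 2\varepsilon$, the point $Y-\varepsilon u$ lies in $\overline{B_{2\varepsilon}(X)}$ and therefore decomposes as $\sum_{i=1}^N\lambda_i X_i$ with $\lambda_i\geq 0$ and $\sum_i\lambda_i=1$. Taking the scalar product with $\chi(\btheta)$ and applying the uniform bound termwise gives
\[
(Y-\varepsilon u)^{\intercal}\chi(\btheta)=\sum_{i=1}^N\lambda_i X_i^{\intercal}\chi(\btheta)\geq a_0,
\]
which rearranges to $Y^{\intercal}\chi(\btheta)\geq a_0+\varepsilon||\chi(\btheta)||_2$. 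For the degenerate case $\chi(\btheta)=\bzero$ both sides of the desired inequality vanish as soon as one sets $a:=\min(a_0,0)$ and $b:=\varepsilon$, yielding the claim for all $\btheta$ uniformly in $Y\in B_\varepsilon(X)$.

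The main difficulty is really the first step: ensuring that one can enclose a whole neighborhood of $X$ inside the convex hull of \emph{finitely many} accessible directions. This is precisely where the strong accessibility hypothesis enters: being in the interior of the convex hull (and not merely inside it) is what permits the finite reduction. Once this is in place, everything else is a short convex combination computation in the spirit of the end of the proof of Lemma \ref{lem:directionstrongbound}.
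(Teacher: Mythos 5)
Your proof is correct and follows essentially the same route as the paper: reduce to finitely many accessible directions whose convex hull contains a ball around (a positive multiple of) $X$, apply Lemma \ref{lem:accessiblebound} to each of them, and then shift by $\varepsilon\,\chi(\btheta)/||\chi(\btheta)||_2$ inside that ball to extract the term $b\,||\chi(\btheta)||_2$, exactly as in the paper's proof (which shifts $X$ first and then handles $Y\in B_{\varepsilon/2}(X)$, a mere reordering of the same computation). The only cosmetic point is that $\mathcal{D}_{sad}$ is defined up to a positive multiplicative constant, so strictly you should run the argument with $\lambda X$ strongly accessible for some $\lambda>0$ and rescale the constants $a,b,\varepsilon$ at the end, as the paper does.
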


\begin{proof}
    If $X$ is strongly accessible, then there exist $\lambda\in(0,+\infty)$, $N \geq 1$ and $(X_i)_{1\leq i \leq N}$ such that, for all $i\in[N]$, $X_i$ is accessible and the convex hull of $\{X_i\}_{i \in [N]}$ contains a ball of positive radius centered on $\lambda X$.
    By Lemma \ref{lem:accessiblebound}, there exists a constant $m^-$ such that for all $i\in [N]$, we have:
    \[
    \forall \btheta\in\R^K, \ X_i^{\intercal}\chi(\btheta) \geq m^-.
    \]
    From this we get that there exists $M^-$ and $\varepsilon>0$ such that 
    \[
    \forall \btheta\in\R^K, \ \forall Y \in B_{\varepsilon}(X),\  \ Y^{\intercal}\chi(\btheta) \geq M^-.
    \]
    In particular, by taking $Y=X-\varepsilon \chi(\btheta)/||\chi(\btheta)||_2$ we get:
    \[
    X^{\intercal}\chi(\btheta) \geq M^-+\varepsilon||\chi(\btheta)||_2.
    \]
    So for all $Y\in B_{\varepsilon/2}(X)$ we have:
    \[
    Y^{\intercal}\chi(\btheta) \geq M^-+\varepsilon||\chi(\btheta)||_2 + (Y-X)^{\intercal}\chi(\btheta) \geq M^-+\frac{\varepsilon}{2}||\chi(\btheta)||_2.
    \]
\end{proof}

We now have all the tools to prove Theorem \ref{thm:accessdir}.

\begin{proof}[Proof of Theorem \ref{thm:accessdir}]

We divide the proof into two parts.

\textbf{1) The set $\mathcal{D}_{asy}$ is open and convex.} 

First, $\mathcal{D}_{asy}$ is open by Proposition \ref{prop:opendirection2}. Now fix $\btheta_1, \btheta_2\in\mathcal{M}_{\text{crit}}$. Since $\mathcal{D}_{asy}$ is a cone, we only need to prove that the set $$E := \{s\in[0,1], sX_{\btheta_1}+(1-s)X_{\btheta_2}\in \mathcal{D}_{asy}\}$$ 
is equal to the whole interval $[0,1]$. Observe first that $E$ is open in $[0,1]$ by Proposition \ref{prop:opendirection2} and is nonempty as it contains $0$ and $1$. Hence, we only have to show that $E$ is closed.

By Lemma \ref{lem:explicitboundonc} applied to $\btheta_1$ and $\btheta_2$, there exist $a'\in\R$ and $b'>0$ such that:
\begin{equation}
\label{eq:borne inf theta 1 theta 2}
\forall \btheta\in \R^K,\ \forall s\in[0,1],\ (sX_{\btheta_1}+(1-s)X_{\btheta_2})^{\intercal}\chi(\btheta)\geq a' + b'||\btheta||_2.
\end{equation}
If there exist $\btheta \in \cM_{crit}$ and $\lambda>0$ such that $X_{\btheta}=\lambda(sX_{\btheta_1}+(1-s)X_{\btheta_2})$, then by Lemma \ref{lem:resultonc1} (ii) and the fact that $\chi(\bzero)=\bzero$, we have $(sX_{\btheta_1}+(1-s)X_{\btheta_2})^{\intercal}\chi(\btheta)\leq 0$ and therefore $||\btheta||_2\leq -\frac{a'}{b'}$ by \eqref{eq:borne inf theta 1 theta 2}. 
Therefore, the set 
$$\{\btheta\in\cM_{crit}, \exists s \in [0,1],\lambda\in(0,+\infty),\ X_{\btheta}=\lambda(sX_{\btheta_1}+(1-s)X_{\btheta_2})\}$$ 
is bounded. 

Using the fact that the asymptotic direction is a continuous function of $\btheta$ (again since the asymptotic direction can be written as $X_{\btheta}=\text{com}(M_{\btheta}-I_n)\bone /||\text{com}(M_{\btheta}-I_n)\bone||_2$, see the proof of Proposition \ref{prop:opendirection2}), the set $E$ is closed and therefore it is $[0,1]$. 
As a consequence, $\mathcal{D}_{asy}$ is convex.\\

\textbf{2) $\mathcal{D}_{asy}=\mathcal{D}_{sad}$}

Let us show first that $\mathcal{D}_{asy}\subseteq\mathcal{D}_{sad}$. 
Fix $\btheta_0 \in \cM_{crit}$. We want to prove that $X_{\btheta_0} \in \mathcal{D}_{sad}$. Since $\mathcal{D}_{asy}$ is open by Proposition \ref{prop:opendirection2}, there exists $\alpha>0$ and $\btheta^{\pm}_1,\dots,\btheta^{\pm}_K \in \cM_{crit}$, $\lambda^{\pm}_1, \ldots, \lambda^{\pm}_K>0$ such that $\lambda^{\pm}_i X_{\btheta^{\pm}_i}:=X_{\btheta_0}\pm \alpha e_i$. By Corollary \ref{cor:accessible directions are close to asymptotics}, for any $\varepsilon>0$, there exist accessible directions $Y^{\pm}_1,\dots,Y^{\pm}_K$ such that $||Y^{\pm}_i-\lambda^{\pm}_i X_{\btheta^{\pm}_i}||_1\leq \varepsilon$. Finally, by taking $\varepsilon$ small enough, there exists a small ball centered on $X_{\btheta}$ in the convex hull of $\{Y^{\pm}_1,\dots,Y^{\pm}_K\}$ and therefore, by definition of $\mathcal{D}_{sad}$ we get that $X_{\btheta}\in\mathcal{D}_{sad}$. \\ 

Now we show that $\mathcal{D}_{sad}\subseteq\mathcal{D}_{asy}$. Fix $X\in\mathcal{D}_{sad}$. By Lemma \ref{lem:accessiblestrongbound}, there exist $a\in\R$ and $b>0$ such that:
\begin{equation}
\label{eq:chitheta}
\forall \btheta\in\R^K, \ X^{\intercal} \chi(\btheta)\geq a +b||\chi(\btheta)||_2.
\end{equation}
Fix $\btheta_0 \in \cM_{crit}$. By the Cauchy-Schwartz inequality, we have $||\chi(\btheta)||_2\geq X_{\btheta_0}^{\intercal}\chi(\btheta)/||X^\intercal_{\btheta_0}||_2$. Hence, combining \eqref{eq:chitheta} and Lemma \ref{lem:explicitboundonc} applied to $\btheta_0$, there exist $a'\in\R$ and $b'>0$ such that:
\[
\forall \btheta\in\R^K, \ X^{\intercal}\chi(\btheta)\geq a'+b'||\btheta||_2.
\]

\noindent Hence, there exists $\btheta_{X} \in \R^K$ that minimizes $\btheta\mapsto X^{\intercal}\chi(\btheta)$. This implies by Lemma \ref{lem:uniquegammadir} that $\mathcal{D}_{sad}\subseteq \mathcal{D}_{asy}$.
\end{proof}

We can now prove Theorem \ref{thm:accessdir2}.

\begin{proof}[Proof of Theorem \ref{thm:accessdir2}]
First, $\mathcal{D}_{asy}$ is nonempty by Proposition \ref{prop:Mcrit is nonempty}, and open and convex by Theorem \ref{thm:accessdir}.\\
Now, fix $\Gamma\in\cM^*_{\ell,K}(\Z)$. Without loss of generality, we can assume that $\Gamma$ is of rank $\ell$. Fix $\overline{\btheta}\in\R^K$ and recall the definition:
\[
\mathcal{D}^{\Gamma,\overline{\btheta}}_{asy}:=\{\lambda \Gamma X_{\btheta} |\lambda\in(0,+\infty),\btheta\in\mathcal{M}_{\text{crit}}, \btheta \sim_\Gamma \overline{\btheta}\}.
\]
If $\bzero\in\mathcal{D}^{\Gamma,\overline{\btheta}}_{asy}$ then by Proposition \ref{prop:opendirection2}, $\mathcal{D}_{asy}^{\Gamma,\overline{\btheta}}=\{\bzero\}$. In this case, by Lemma \ref{lem:0inDasy}, $\overline{\btheta}$ is critical, $\Gamma X_{\overline{\btheta}}=\bzero$ and $\left(\btheta\sim_{\Gamma} \overline{\btheta} \implies \btheta= \overline{\btheta}\right)$.\\
On the other hand, assume that $\bzero\not\in \mathcal{D}^{\Gamma,\overline{\btheta}}_{asy}$. Set $X\in \mathcal{D}_{asy}$ such that $\Gamma X\not=\bzero$. By Theorem \ref{thm:accessdir}, $X \in \mathcal{D}_{sad}$. Observe that the function $f_X$ is continuous and $\{\btheta \ | \ \btheta\sim_{\Gamma} \overline{\btheta}\}:=\chi^{-1}(\chi(\overline{\btheta})+Im(\Gamma^\intercal))$ is closed. Hence, by Lemma \ref{lem:explicitboundonc}, there exists $\btheta' \in \R^K$ maximizing $f_X$ on $\{\btheta, \btheta\sim_{\Gamma} \overline{\btheta}\}$. By Lemma \ref{lem:uniquegammadir}, $\bmu_{\btheta'}$ is necessarily critical and its asymptotic direction $X_{\btheta'}$ satisfies
\begin{equation}
\Gamma X_{\btheta'} = \lambda \Gamma X \text{ for some } \lambda >0.
\end{equation}
Finally, by Lemma \ref{lem:injective2} this maximizer is unique.
\end{proof}

We finally prove Corollary \ref{cor:coefsinz}.

\begin{proof}[Proof of Corollary \ref{cor:coefsinz}]
Fix $\Gamma\in\cM_{\ell,K}^*$. By Theorem \ref{thm:accessdir2} there are two possible cases:
\begin{itemize}
    \item either $\mathcal{D}^{\Gamma,\bzero}_{asy}=\{\bzero\}$; in this case, by Theorem \ref{thm:accessdir2}, $\bmu = \bmu_\bzero$ is critical and the result follows immediately;
    \item or $\bzero \notin \mathcal{D}^{\Gamma,\bzero}_{asy}$. In this case, by Theorem \ref{thm:accessdir2}, $\mathcal{D}_{asy}$ is open and nonempty, so that necessarily there exists $X \in \mathcal{D}_{sad}$ such that $\Gamma X \neq \bzero$. By Theorem \ref{thm:accessdir2} again, there exists $\btheta$ critical such that $\btheta\sim_{\Gamma} \bzero$.
\end{itemize}
\end{proof}

\section{Application to local limits of noncritical trees}
\label{sec:locscallim}

In this section, we prove Theorem \ref{thm:critical local limit} as a consequence of Theorem \ref{thm:accessdir2}. Let $\bze$ be an entire offspring distribution and $\Gamma \in \cM^*_{\ell,K}(\Z)$. The main idea is to show that, in the specific case where $\bze$ is critical and $(\bk(n))_{n \geq 1} \in (\Z_+^\ell)^\N$ is such that $\bk(n)/||\bk(n)||_1 \rightarrow X$ (where $X$ denotes the $1$-left eigenvector associated to $\bze$), then the conditioned trees $\cT^{(i)}_{\Gamma,\bk(n)}$ converge locally as $n \rightarrow\infty$. This is done in Section \ref{ssec:local limits of critical trees}, making use of \cite{ADG18}. If either $\bk(n)/||\bk(n)||_1$ does not converge to $X$ or $\bze$ is not critical, we use Theorem \ref{thm:accessdir2} to show that there exists a critical distribution that is $\Gamma$-equivalent to $\bze$, which is enough to conclude.

\subsection{Multitype Kesten trees}

We construct here the infinite discrete trees that appear as local limits of critical multitype BGW trees. It turns out that they all share a common structure: a unique end (infinite spine), on which are grafted independent multitype trees that are identically distributed conditionally on the type of their root. In regard of Kesten's seminal work \cite{Kes86}, we will call these trees multitype Kesten trees. This multitype construction was first introduced in \cite{KLPR97}, see also \cite[Proposition $3.1$]{Ste18} for a proof in the broader case of mutitype forests.

\begin{Definition}
Let $\bze$ be a $K$-type critical distribution, and recall that $\bb$ denotes the renormalized right $1$-eigenvector of the mean matrix $M$. Denote by $\hz$ the biased family of distributions defined as:
\begin{align*}
\forall j \in [K], \forall \bx \in \cW_K, \hz^{(j)}(\bx) = \frac{1}{b_j} \sum_{k=1}^{|\bx|} b_{x_k} \zeta^{(j)}(\bx),
\end{align*}
where $|\bx|$ denotes the length of $\bx$. In particular, $\hz^{(j)}(\varnothing) = 0$.
Given a type $i \in [K]$, we define the tree $\cT_*(\bze)$ as follows: it is made of a spine, which is an infinite branch starting from the root. On this infinite branch, vertices have distribution $\hz$. Given an element $v$ of the spine, denote by $\bw_v$ its ordered list of offspring types. Then, the probability that the child of $v$ belonging to the infinite spine is $vj$ (that is, the $j$-th of its children) is proportional to $b_{\ell(vj)}$ - that is, equal to
\begin{align*}
\frac{b_{\ell(vj)}}{\sum_{i=1}^{|\bw_v|} b_{\ell(vi)}},
\end{align*}
where we recall that $\ell(u)$ is the type of the vertex $u$.

Finally, for all $i \in [K]$, on any offspring of type $i$ of a vertex of the spine that is not itself on the spine, we graft a tree $\cT^{(i)}$ with root of type $i$, which is independent of all the rest of the tree.
\end{Definition}

In the monotype case, the child of a vertex on the spine that will be itself on the spine is just chosen uniformly at random. Observe also that, since $\hz^{(j)}(\varnothing)=0$ for all $j \in [K]$, the spine is indeed infinite.

\subsection{Local limits of critical conditioned trees}
\label{ssec:local limits of critical trees}

We prove here the local convergence of conditioned trees, under the additional aperiodicity condition (Definition \ref{def:aperiodic}). We start by proving it in the specific case where $\bze$ is critical and $\bk(n)/||\bk(n)||_1$ converges to the $1$-left eigenvector $X$ of $M$ with positive coordinates and such that $||X||_1=1$. We then use the results of the previous sections to extend it to noncritical distributions and other asymptotic directions.

A distribution $\bze$, a matrix $\Gamma \in \cM_{\ell,K}^*(\Z)$ and a vector $\bk \in \Z^\ell$ being given, we denote by $\cT^{(1)}_{\Gamma,\bk}$ the multitype tree with offspring distribution $\bze$ and root of type $1$, conditioned on $\Gamma  \bN(\cT) = \bk^\intercal$.

\begin{Theorem}
\label{thm:critical local limit cas facile}
    Let $\bze$ be a critical, aperiodic, finite, nondegenerate, nonlocalized and irreducible distribution, and $X$ the renormalized $1$-left eigenvector of $M$. Let $\Gamma \in \cM^*_{\ell,K}(\Z)$ of rank $\ell$ be such that $\Gamma X \neq \bzero$. Let $(\bk(n))_{n\geq 1}$ be a sequence of $K$-tuples of integers such that
    \begin{itemize}
        \item $||\bk(n)||_1 \underset{n \rightarrow \infty}{\rightarrow} +\infty$;
        \item As $n \rightarrow \infty$,
       $ \frac{\bk(n)}{||\bk(n)||_1} \underset{\rightarrow}{\rightarrow} X$;
        \item for all $n \geq 1, \Prob\left( \Gamma  N(\cT^{(1)})=\bk(n) \right) > 0$.
    \end{itemize}
Then, we have:
\begin{align*}
    \cT^{(1)}_{\Gamma,\bk(n)} \underset{n \rightarrow \infty}{\overset{(d),loc}{\rightarrow}} \cT_*^{(1)},
\end{align*}
where $\cT_*^{(1)}$ is the multitype Kesten tree associated to $\bze$.
\end{Theorem}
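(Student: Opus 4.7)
The plan is to reduce the conditioning $\{\Gamma \bN(\cT^{(1)}) = \bk(n)\}$ to the finer conditionings $\{\bN(\cT^{(1)}) = \bn\}$, which are handled by Abraham--Delmas--Guo \cite{ADG18}. For any admissible ball pattern $\beta$ of radius $r$ with root of type $1$, the law of total probability yields
\begin{align*}
\Prob\!\left( B_r\!\left(\cT^{(1)}_{\Gamma,\bk(n)}\right) = \beta \right) = \sum_{\bn \in \Z_+^K \,:\, \Gamma\bn = \bk(n)} \Prob\!\left( B_r\!\left(\cT^{(1)}_{\bn}\right) = \beta \right)\, \Prob\!\left( \bN(\cT^{(1)}) = \bn \mid \Gamma \bN(\cT^{(1)}) = \bk(n) \right),
\end{align*}
where $\cT^{(1)}_{\bn}$ denotes $\cT^{(1)}$ conditioned on $\bN(\cT^{(1)}) = \bn$, and the sum runs over the fiber of $\bk(n)$ along $\Gamma$.

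Two ingredients then suffice. First, an upgrade of the pointwise local convergence in \cite{ADG18} to a uniform version: for every $\varepsilon>0$ and every admissible ball $\beta$, there exist $\delta>0$ and $n_0 \geq 1$ such that
\begin{align*}
\left|\Prob\!\left( B_r\!\left(\cT^{(1)}_{\bn}\right) = \beta \right) - \Prob\!\left( B_r\!\left(\cT^{(1)}_*\right) = \beta \right)\right| \leq \varepsilon
\end{align*}
holds uniformly over $\bn$ with $|\bn|_1 \geq n_0$, $\|\bn/|\bn|_1 - X\|_1 \leq \delta$ and $\Prob(\bN(\cT^{(1)}) = \bn)>0$; this follows from \cite{ADG18} by a standard diagonal subsequence extraction. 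Second, the concentration statement
\begin{align*}
\Prob\!\left( \bN(\cT^{(1)})/|\bN(\cT^{(1)})|_1 \in B_\delta(X) \,\middle|\, \Gamma \bN(\cT^{(1)}) = \bk(n) \right) \underset{n \to \infty}{\longrightarrow} 1 \qquad \text{for every } \delta>0.
\end{align*}
Combining these two ingredients with the above decomposition and letting $n \to \infty$ then $\varepsilon \to 0$ yields the claimed local convergence.

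The concentration statement is the main obstacle. I plan to derive it from a multivariate local limit theorem for $\bN(\cT^{(1)})$: via the Lukasiewicz-path encoding, $\bN(\cT^{(1)})$ is representable as the first hitting time of $\bzero$ of a $K$-dimensional centered lattice random walk with increments determined by $\bze$, having finite exponential moments (as $\bze$ is entire), being aperiodic and truly $K$-dimensional (by the aperiodicity and nonlocalization hypotheses). A classical multivariate local limit theorem then yields an asymptotic of the shape
\begin{align*}
\Prob\!\left( \bN(\cT^{(1)}) = \bn \right) \sim \frac{c_X}{N^{(K+2)/2}}\,\exp\!\left( -\tfrac{1}{2N}\,(\bn - NX)^\intercal Q\, (\bn - NX) \right), \qquad N := |\bn|_1,
\end{align*}
valid for $\bn/N$ in a neighbourhood of $X$, where $Q$ is a positive definite quadratic form on the hyperplane $\{v \in \R^K : X \cdot v = 0\}$. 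Positive definiteness of $Q$ is precisely where nonlocalization enters, while aperiodicity ensures the estimate on every admissible coset. Summing over the $(K-\ell)$-dimensional affine sublattice $\{\bn : \Gamma\bn = \bk(n)\}$ produces the analogous local estimate for $\Gamma \bN(\cT^{(1)})$ at $\bk(n)$, and identifies the quotient $\Prob(\bN = \bn \mid \Gamma \bN = \bk(n))$ as an approximately Gaussian distribution on that sublattice, centered at a multiple of $X$ with fluctuations of order $\sqrt{|\bk(n)|_1}$. This delivers the required concentration on the unit simplex and closes the proof.
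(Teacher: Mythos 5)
Your skeleton (decompose over the fiber $\{\bn:\Gamma\bn=\bk(n)\}$, a uniform version of the Abraham--Delmas--Guo local convergence, plus concentration of $\bN(\cT^{(1)})/||\bN(\cT^{(1)})||_1$ around $X$ under the conditioning) is exactly the paper's: the proof there establishes the concentration in Lemma \ref{lem:proche de la direction asymptotique} and then concludes by \cite[Theorem 3.1]{ADG18} applied to the sets $I_{n,\varepsilon_n}$. The divergence, and the problem, is in how you prove concentration. The paper does it one-sidedly: it shows $\Prob(\Gamma\,\bN(\cT^{(1)})=\bk(n))\geq \exp(-o(n))$ (Lemma \ref{lem:probgamma}, via Lemma \ref{lem:Blomme} and \cite[Theorem 4.7]{ADG18}), and separately shows via the cycle lemma and a moderate-deviation estimate for the i.i.d.\ blob variables $\xi^{(p)}$ that the bad event $\{\Gamma\,\bN=\bk(n),\ \bN/||\bN||_1\notin B_\delta(X)\}$ has probability $\exp(-c||\bk(n)||_1)$; dividing the two gives concentration. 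You instead propose a two-sided Gaussian local limit theorem for $\Prob(\bN(\cT^{(1)})=\bn)$ and then sum it over the affine sublattice.

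The gap is that your LLT is, by your own statement, ``valid for $\bn/N$ in a neighbourhood of $X$,'' whereas the fiber $\{\bn\in\Z_+^K:\Gamma\bn=\bk(n)\}$ is an unbounded $(K-\ell)$-dimensional sublattice containing points with $|\bn|_1$ arbitrarily large and with $\bn/|\bn|_1$ far from $X$. The LLT therefore gives you only the numerator $\sum_{\bn\in F_n,\,\bn/|\bn|_1\in B_\delta(X)}\Prob(\bN=\bn)$ (a polynomial-order quantity $\asymp ||\bk(n)||_1^{-(\ell+2)/2}$); it says nothing about the complementary sum over the rest of the fiber, and without an exponential upper bound on that tail you cannot identify the conditional law as approximately Gaussian nor conclude concentration. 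Supplying that tail bound requires precisely the Chernoff/moderate-deviation estimates on the blob sums that constitute the bulk of the paper's Lemma \ref{lem:proche de la direction asymptotique}, so this step cannot be absorbed into ``a classical multivariate local limit theorem.'' Two further points to repair: the theorem does not assume $\bze$ entire, so ``finite exponential moments (as $\bze$ is entire)'' invokes a hypothesis you do not have (exponential moments of the blob variables come from \cite[Prop.~4]{Mie08}, as the paper notes); and the Gaussian fluctuation $\bn-NX$ with $N=|\bn|_1$ lives in $\{v:\bone\cdot v=0\}$, not $\{v:X\cdot v=0\}$, and the representation of $\bN(\cT^{(1)})$ as ``the first hitting time of $\bzero$ of a $K$-dimensional centered walk'' is not the correct encoding (the relevant reduction is the blob decomposition along type-$1$ vertices together with the one-dimensional cycle lemma). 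Finally, your LLT-summation route also silently needs the arithmetic fact that the fiber meets $\Z_+^K$ near $\lambda(n)X$, which is the content of Lemma \ref{lem:Blomme}.
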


Let us immediately show how it implies Theorem \ref{thm:critical local limit}.

\begin{proof}[Proof of Theorem \ref{thm:critical local limit}]
Consider $\bze$ either 
\begin{itemize}
    \item noncritical, or
    \item critical and such that $\Gamma X_{\bze} \neq \bzero$, where $X_{\bze}$ is the normalized left $1$-eigenvector of $M$.
\end{itemize}
Let $X \in \mathcal{D}_{asy}$ such that $\Gamma X \neq \bzero$, and $(\bk(n))_{n \geq 1}$ as in Theorem \ref{thm:critical local limit}. Necessarily, by assumption, we have $\bzero \notin \mathcal{D}_{asy}^{\Gamma,\bzero}$. By Theorem \ref{thm:accessdir2}, there exists a unique couple $(\btheta,\lambda) \in \cM_{crit} \times (0,+\infty)$ such that $\btheta \sim_\Gamma \bzero$ and $\Gamma X_\btheta=\lambda \Gamma X$. Now, $\bze_\btheta$ satisfies the assumptions of Theorem \ref{thm:critical local limit cas facile} and, by $\Gamma$-equivalence, for all $n \geq 1$: 
\begin{align*}
    \cT^{(1),\btheta}_{\Gamma,\bk(n)} \overset{(d)}{=} \cT^{(1)}_{\Gamma,\bk(n)}.
\end{align*}
The result follows.
\end{proof}

The rest of the section is devoted to the proof of Theorem \ref{thm:critical local limit cas facile}.

Let us start by showing that one can find, for all $n$ large enough, a nonnegative integer preimage of $\bk(n)$ by $\Gamma$ which is asymptotically close to $\{\lambda X, \lambda \in \R\}$.

\begin{Lemma}\label{lem:Blomme}
Let $(\bk(n),n \geq 1)$ be as in Theorem \ref{thm:critical local limit cas facile}. Then, there exists a sequence $(\mathbf{r}(n))_{n \geq 1}$ of elements of $\Z_+^K$ such that:
\begin{itemize}
    \item for all $n \geq 1$, $\Gamma \mathbf{r}(n)^\intercal = \bk(n)^\intercal$ ;
    \item $\big\vert\big\vert\mathbf{r}(n)-||\mathbf{r}(n)||_1X\big\vert\big\vert_1=o(||\mathbf{r}(n)||_1)$.
\end{itemize}.
\end{Lemma}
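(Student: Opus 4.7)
The plan is to reduce the problem to finding a nonnegative integer point in the affine preimage $\Gamma^{-1}(\bk(n))$ that lies near the ray $\R_+ X$. First, I would exploit the positive probability assumption: since $\Prob(\Gamma \bN(\cT^{(1)}) = \bk(n)) > 0$, the vector $\bN(T)$ for any tree $T$ witnessing this event provides an initial integer solution $\mathbf{r}_0(n) \in \Z_+^K$ with $\Gamma \mathbf{r}_0(n) = \bk(n)^\intercal$. The integer solutions are exactly $\mathbf{r}_0(n) + (\ker\Gamma \cap \Z^K)$, so the freedom we have is to add an arbitrary element of the lattice $\ker\Gamma \cap \Z^K$.

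Next, I would introduce the "ideal" real target $\mathbf{r}_{\text{ideal}}(n) := \frac{||\bk(n)||_1}{||\Gamma X||_1} X$, which lies on $\R_+ X$. Using the convergence $\bk(n)/||\bk(n)||_1 \to \Gamma X/||\Gamma X||_1$ (the natural reading of the theorem's hypothesis since $\Gamma X \neq \bzero$), we have
\[
\Gamma\bigl(\mathbf{r}_{\text{ideal}}(n) - \mathbf{r}_0(n)\bigr) = ||\bk(n)||_1\Bigl(\tfrac{\Gamma X}{||\Gamma X||_1} - \tfrac{\bk(n)}{||\bk(n)||_1}\Bigr) = o(||\bk(n)||_1).
\]
Let $P$ denote the orthogonal projection onto $\ker\Gamma$, and set $\mathbf{r}^{\text{corr}}(n) := \mathbf{r}_0(n) + P(\mathbf{r}_{\text{ideal}}(n) - \mathbf{r}_0(n))$; then $\mathbf{r}^{\text{corr}}(n) \in \Gamma^{-1}(\bk(n))$, and the error $\mathbf{r}_{\text{ideal}}(n) - \mathbf{r}^{\text{corr}}(n) = (I-P)(\mathbf{r}_{\text{ideal}}(n) - \mathbf{r}_0(n))$ lies in the row space of $\Gamma$, on which $\Gamma$ is a linear isomorphism onto $\text{Im}(\Gamma)$. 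Combined with the bound just displayed, this yields $||\mathbf{r}^{\text{corr}}(n) - \mathbf{r}_{\text{ideal}}(n)||_2 = o(||\bk(n)||_1)$.

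The third step is to round $\mathbf{r}^{\text{corr}}(n)$ to an integer in its coset: the lattice $\ker\Gamma \cap \Z^K$ has full rank in the subspace $\ker\Gamma$, so its fundamental domain has a bounded diameter $C_\Gamma < \infty$ depending only on $\Gamma$. We can therefore choose $\mathbf{r}(n) \in \mathbf{r}_0(n) + (\ker\Gamma \cap \Z^K)$ with $||\mathbf{r}(n) - \mathbf{r}^{\text{corr}}(n)||_2 \leq C_\Gamma$. Combining all the estimates,
\[
\mathbf{r}(n) = \mathbf{r}_{\text{ideal}}(n) + o(||\bk(n)||_1) = \tfrac{||\bk(n)||_1}{||\Gamma X||_1}X + o(||\bk(n)||_1).
\]
Since by Perron-Frobenius the vector $X$ has strictly positive coordinates of unit $\ell^1$-norm, every coordinate of $\mathbf{r}_{\text{ideal}}(n)$ is of order $\Theta(||\bk(n)||_1)$, so for all $n$ large enough $\mathbf{r}(n) \in \Z_+^K$; for the finitely many remaining indices we simply set $\mathbf{r}(n) := \mathbf{r}_0(n)$.

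Finally, one reads off $||\mathbf{r}(n)||_1 = ||\bk(n)||_1/||\Gamma X||_1 + o(||\bk(n)||_1)$, hence $||\mathbf{r}(n)||_1 X = \mathbf{r}_{\text{ideal}}(n) + o(||\bk(n)||_1)$, giving $||\mathbf{r}(n) - ||\mathbf{r}(n)||_1 X||_1 = o(||\bk(n)||_1) = o(||\mathbf{r}(n)||_1)$. The main subtlety is ensuring the rounding step stays close to the ray; this hinges crucially on $\Gamma X \neq \bzero$ (so that $\Gamma$ is a genuine isomorphism on the orthogonal complement of $\ker\Gamma$ used to control $(I-P)$) and on the strict positivity of the coordinates of $X$ (to guarantee the rounded vector eventually has nonnegative integer entries). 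No finer diophantine information about $\Gamma$ is needed since an $O(1)$ rounding error is absorbed by the $o(||\bk(n)||_1)$ term.
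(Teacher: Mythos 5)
Your proof is correct, and it follows the same overall strategy as the paper: target the ideal point $\lambda(n)X$ with $\lambda(n)=||\bk(n)||_1/||\Gamma X||_1$, perturb it by $o(\lambda(n))$ to land in $\Z_+^K\cap\Gamma^{-1}(\bk(n))$, and use the strict positivity of the coordinates of $X$ to get nonnegativity for large $n$. The linear-algebra bookkeeping differs, though. The paper first rounds $\lambda(n)X$ coordinatewise to an integer point $\mathbf{s}(n)$, then repairs the defect $\Gamma\mathbf{s}(n)-\bk(n)$ (which is $o(\lambda(n))$ and lies in the rank-$\ell$ lattice $\Gamma(\Z^K)$) by lifting it through integer preimages of a basis of $\Gamma(\Z^K)$; the positive-probability hypothesis is used only to guarantee $\bk(n)\in\Gamma(\Z^K)$. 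You instead start from a witness solution $\mathbf{r}_0(n)=\bN(T)$ supplied by that same hypothesis, move within the affine solution set by orthogonally projecting onto $\ker\Gamma$, and round in the full-rank lattice $\ker\Gamma\cap\Z^K$; the control of the non-kernel component via the isomorphism $\Gamma\colon(\ker\Gamma)^\perp\to\R^\ell$ plays exactly the role of the paper's bound $\max_i|c_i(n)|=o(\lambda(n))$ obtained by equivalence of norms. Both routes rely on the same two ingredients (rationality of $\Gamma$, so the relevant lattices have full rank, and the directional convergence of $\bk(n)$ read as $\bk(n)/||\bk(n)||_1\to\Gamma X/||\Gamma X||_1$, which is also what the paper's own computation uses), and your handling of the finitely many small $n$ by falling back on $\mathbf{r}_0(n)$ is harmless since the second bullet is asymptotic. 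There is no gap.
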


\begin{proof}
First, observe that $\Gamma(\Z^K)$ is a subgroup of $\Z^\ell$ of rank $\ell$. Let $\alpha_1, \ldots, \alpha_\ell$ be a basis of this subgroup, and $e_1, \ldots, e_\ell$ be preimages of $\alpha_1, \ldots, \alpha_\ell$ by $\Gamma$.
Define $\lambda(n) := \frac{||\bk(n)||_1}{||\Gamma X||_1}$ and $\mathbf{s}(n) := (s_1(n), \ldots, s_K(n)) \in \Z_+^K$ such that, for all $i$, $s_i(n) = \lfloor \lambda(n) X_i \rfloor$. By definition of the sequence $\bk$, 
\begin{align*}
||\Gamma \mathbf{s}(n) - \bk(n)||_1 &\leq ||\Gamma  \mathbf{s}(n) - \lambda(n) \Gamma X||_1 + || \lambda(n) \Gamma X - \bk(n)||_1\\
&=o(\lambda(n)).
\end{align*} 
Now let $c_1(n), \ldots, c_\ell(n) \in \Z$ such that $\Gamma  \mathbf{s}(n) - \bk(n) = \sum_{i=1}^\ell c_i(n) \alpha_i$ (which exists since $\bk(n) \in \Gamma(\Z_+^K)$ by assumption). By equivalence of the norms on $\R^\ell$, there exists $C>0$ such that $\max_i |c_i(n)| \leq C || \Gamma \ \mathbf{s}(n) - \bk(n)||_1=o(\lambda(n))$, and thus
\begin{align*}
\Bigg\vert\Bigg\vert\sum_{i=1}^\ell c_i(n) e_i\Bigg\vert\Bigg\vert_1 = o(\lambda(n)).
\end{align*}

In particular, defining $\mathbf{r}(n) = \mathbf{s}(n) + \sum_{i=1}^\ell c_i(n) e_i$, we have by definition:
\begin{align*}
\Gamma  \mathbf{r}(n) &= \Gamma \mathbf{s}(n) + \sum_{i=1}^\ell c_i(n) \Gamma(e_i)\\
&= \bk(n).
\end{align*}

In addition, since $X$ has positive coordinates and $||\sum_{i=1}^\ell c_i(n) e_i||_1 = o(\lambda(n))$, $\mathbf{r}(n)$ has positive coordinates for $n$ large enough and $||\mathbf{r}(n) - ||\mathbf{r}(n)||_1 X||_1=o(||\mathbf{r}(n)||_1)=o(\lambda(n))$. This ends the proof.
\end{proof}

This allows us to show that the event $\Gamma \ \bN(\cT^{(1)}) = \bk(n)^\intercal$ has large enough probability. 

\begin{Lemma}
\label{lem:probgamma}
   Let $(\bk(n),n \geq 1)$ be as in Theorem \ref{thm:critical local limit cas facile}. We have
    \begin{align*}
        \Prob\left( \Gamma \ \bN(\cT^{(1)}) = \bk(n) \right) = \exp\left(-o(n) \right).
    \end{align*}
\end{Lemma}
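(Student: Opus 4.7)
The plan is to restrict the event $\{\Gamma \bN(\cT^{(1)}) = \bk(n)\}$ to a single $\Z_+^K$-valued preimage of $\bk(n)$ under $\Gamma$ whose probability can be controlled. Lemma~\ref{lem:Blomme} provides the natural candidate: a sequence $(\mathbf{r}(n))_{n \geq 1}$ with $\Gamma \mathbf{r}(n)^\intercal = \bk(n)^\intercal$, $||\mathbf{r}(n)||_1 \to \infty$, $\mathbf{r}(n)/||\mathbf{r}(n)||_1 \to X$, and $||\mathbf{r}(n)||_1 = O(||\bk(n)||_1)$. Since the event $\{\bN(\cT^{(1)}) = \mathbf{r}(n)\}$ is contained in $\{\Gamma \bN(\cT^{(1)}) = \bk(n)\}$, I would reduce the task to obtaining a polynomial-in-$||\mathbf{r}(n)||_1$ lower bound on $\Prob(\bN(\cT^{(1)}) = \mathbf{r}(n))$.

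The second step is to invoke a multitype local limit theorem. Under our hypotheses on $\bze$ (critical, aperiodic, finite, nondegenerate, nonlocalized, irreducible, and entire, so admitting moments of all orders), one has, for $\mathbf{r} \in \Z_+^K$ with $||\mathbf{r}||_1$ large and $\mathbf{r}/||\mathbf{r}||_1$ in a fixed neighborhood of $X$, an asymptotic of Gaussian type
\[
\Prob\left(\bN(\cT^{(1)}) = \mathbf{r}\right) \;\sim\; \frac{c}{||\mathbf{r}||_1^{(K+1)/2}}.
\]
This can be extracted from \cite{Pen16}, or derived by combining a multitype cycle lemma with a classical local LLT for sums of i.i.d.\ random vectors in $\Z^K$: the exponential moments come from entireness, the nondegeneracy of the limiting covariance is ensured by the nonlocalized assumption, and aperiodicity guarantees that the LLT is supported on the full lattice rather than a proper sublattice. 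Applied to $\mathbf{r}(n)$, this yields a polynomial lower bound $\Prob(\Gamma \bN(\cT^{(1)}) = \bk(n)) \geq c'\, ||\bk(n)||_1^{-D}$, from which $-\log \Prob = O(\log ||\bk(n)||_1) = o(n)$ follows, matching the claimed $\exp(-o(n))$ bound.

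The main difficulty is the LLT step: one must verify that our regularity assumptions are jointly strong enough to deliver a Gaussian-type LLT along the asymptotic direction $X$. This is essentially standard but is the only real analytical content of the argument, and the role of each hypothesis (entireness for moments, nonlocalization for nondegeneracy of the covariance, aperiodicity for lattice support, irreducibility and finiteness for the typical-direction structure) should be clearly visible in the derivation. Everything else in the proof is combinatorial bookkeeping inherited from Lemma~\ref{lem:Blomme}; and if one prefers not to reproduce the LLT computation, a pre-packaged multitype LLT such as that of \cite{Pen16} can be cited to shortcut this step entirely.
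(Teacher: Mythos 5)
Your overall strategy is the same as the paper's: reduce to the single preimage $\mathbf{r}(n)$ furnished by Lemma~\ref{lem:Blomme} via the inclusion $\{\bN(\cT^{(1)})=\mathbf{r}(n)\}\subseteq\{\Gamma\,\bN(\cT^{(1)})=\bk(n)\}$, then invoke a local-limit-type estimate for $\Prob(\bN(\cT^{(1)})=\mathbf{r}(n))$. The paper does exactly this, citing \cite[Theorem 4.7]{ADG18} together with aperiodicity to get $\Prob(\bN(\cT^{(1)})=\mathbf{r}(n))=\exp(-o(n))$.

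However, your intermediate claim of a \emph{polynomial} lower bound is an overstatement that does not follow from what Lemma~\ref{lem:Blomme} gives you. That lemma only guarantees $\bigl\lVert\mathbf{r}(n)-\lVert\mathbf{r}(n)\rVert_1 X\bigr\rVert_1=o(\lVert\mathbf{r}(n)\rVert_1)$, i.e.\ the deviation of $\mathbf{r}(n)$ from the ray $\R_+ X$ can be of order $\lVert\mathbf{r}(n)\rVert_1^{\alpha}$ for any $\alpha<1$. A Gaussian-type local limit theorem gives an asymptotic of the form $c\,n^{-(K+1)/2}\exp(-c'\lvert\mathbf{r}(n)-nX\rvert^2/n)$, so at deviation scale $n^{\alpha}$ with $\alpha\in(1/2,1)$ the probability decays like $\exp(-c''n^{2\alpha-1})$, which is far from polynomial. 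The asymptotic $\Prob(\bN(\cT^{(1)})=\mathbf{r})\sim c\,\lVert\mathbf{r}\rVert_1^{-(K+1)/2}$ holds only when $\mathbf{r}$ stays within $O(\sqrt{\lVert\mathbf{r}\rVert_1})$ of $\lVert\mathbf{r}\rVert_1 X$, not merely ``in a fixed neighborhood of $X$'' after normalization. Your conclusion survives because $\exp(-c''n^{2\alpha-1})=\exp(-o(n))$ for $\alpha<1$, but the correct statement to invoke is a local limit estimate valid in the moderate-deviation regime yielding $\exp(-o(n))$ (this is precisely what \cite[Theorem 4.7]{ADG18} supplies), not a polynomial bound. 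You should either weaken the intermediate claim accordingly or strengthen Lemma~\ref{lem:Blomme} to produce $\mathbf{r}(n)$ within $O(\sqrt{n})$ of the ray, which is not what is proved there.
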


\begin{proof}
By Lemma \ref{lem:Blomme}, there exists $(\mathbf{r}(n))_{n \geq 1}$ a equence of elements of $\Z_+^K$ such that $||\mathbf{r}(n) - ||\mathbf{r}(n)||_1 X||_1=o(||\mathbf{r}(n)||_1)$ and $\Gamma \ \mathbf{r}(n) = \bk(n)$.
Using \cite[Theorem 4.7]{ADG18} and the aperiodicity of $\bze$, we get that
\begin{align*}
    \Prob\left( \Gamma \ \bN(\cT^{(1)}) = \bk(n) \right) \geq \Prob\left( \bN(\cT^{(1)})=\mathbf{r}(n)\right) = \exp\left(-o(n) \right).
\end{align*}
\end{proof}

We now prove that the number of vertices of each type in the tree $\cT^{(1)}_{\Gamma, k(n)}$ is concentrated. We say that $f(n)=o_\Prob(g(n))$ if, for any $\varepsilon>0$, $\Prob(|f(n)|>\varepsilon|g(n)|) \rightarrow 0$ as $n \rightarrow \infty$.

\begin{Lemma}
\label{lem:proche de la direction asymptotique}
    Under the assumptions of Theorem \ref{thm:critical local limit cas facile}, for any $i \in [K]$, we have as $n \rightarrow \infty$:
    \begin{align*}
        \left|N_i\left(\cT^{(1)}_{\Gamma, \bk(n)}\right) - ||\bk(n)||_1 X_i \right| = o_\Prob(||\bk(n)||_1).
    \end{align*}
\end{Lemma}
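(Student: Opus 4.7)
The plan is to combine the cycle-lemma plus moderate-deviation strategy employed in the proof of Proposition \ref{prop:behaviourisasdir} with the sub-exponential lower bound on $\Prob(\Gamma\,\bN(\cT^{(1)}) = \bk(n))$ provided by Lemma \ref{lem:probgamma}. Writing the conditional probability as a ratio,
\[
\Prob\!\left( \Bigl|N_i\bigl(\cT^{(1)}_{\Gamma,\bk(n)}\bigr) - ||\bk(n)||_1 X_i \Bigr| > \varepsilon\,||\bk(n)||_1 \right) = \frac{A_n(\varepsilon)}{B_n},
\]
where $B_n := \Prob(\Gamma\,\bN(\cT^{(1)}) = \bk(n))$ and $A_n(\varepsilon)$ is the corresponding unconditional joint probability, Lemma \ref{lem:probgamma} gives $B_n \geq e^{-o(||\bk(n)||_1)}$. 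It therefore suffices to prove $A_n(\varepsilon) \leq e^{-c(\varepsilon)\,||\bk(n)||_1}$ for some $c(\varepsilon) > 0$.

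I would decompose $A_n(\varepsilon) = \sum_{\mathbf{m}} \Prob(\bN(\cT^{(1)}) = \mathbf{m})$ over $\mathbf{m} \in \Z_+^K$ satisfying $\Gamma\mathbf{m} = \bk(n)$ and $|m_i - ||\bk(n)||_1 X_i| > \varepsilon\,||\bk(n)||_1$, and apply the cycle lemma to obtain
\[
\Prob(\bN(\cT^{(1)}) = \mathbf{m}) = \frac{1}{m_1}\,\Prob(S_{m_1} = \mathbf{m}-\mathbf{e}_1),
\]
where $S_{m_1} := \sum_{p=1}^{m_1}\xi^{(p)}$, with $\xi^{(p)}$ i.i.d.\ copies of the blob vector $(N_j^{(1)})_{1\leq j\leq K}$ from Lemma \ref{lem:expectationinblob}. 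That lemma gives $\E[\xi^{(p)}] = X/X_1$, and entirety of $\bze$ ensures that $\xi^{(p)}$ has exponential moments of all orders (cf.\ \cite[Prop.\ 4]{Mie08}). For $m_1$ in the bulk window $[c\,||\bk(n)||_1, C\,||\bk(n)||_1]$ containing the typical value $||\bk(n)||_1 X_1/||\Gamma X||_1$, a coordinatewise Cram\'er--Chernoff estimate of the type used in the proof of Proposition \ref{prop:behaviourisasdir} (see \cite[Example 1.4]{RS15}) yields $\Prob(S_{m_1} = \mathbf{m}-\mathbf{e}_1) \leq e^{-c'\,||\bk(n)||_1}$ uniformly in admissible $\mathbf{m}$, since $|m_i - ||\bk(n)||_1 X_i| > \varepsilon\,||\bk(n)||_1$ forces a coordinate of $S_{m_1}$ to deviate from its mean by order $||\bk(n)||_1$. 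For $m_1$ outside this window, Chernoff applied to the linear combination $\Gamma S_{m_1}$ gives the same exponential decay, since $\Gamma \mathbf{m} = \bk(n)$ together with the typical behaviour $\Gamma \mathbf{m} \approx m_1\,\Gamma X/X_1$ then forces an order $||\bk(n)||_1$ deviation in $\Gamma S_{m_1}$.

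Summing over the polynomially many admissible $\mathbf{m}$'s with coordinates bounded by $C\,||\bk(n)||_1$, and absorbing the tail $\mathbf{m}$'s with some coordinate exceeding $C\,||\bk(n)||_1$ via an additional Chernoff bound on $||\bN(\cT^{(1)})||_1$, yields $A_n(\varepsilon) \leq e^{-c(\varepsilon)\,||\bk(n)||_1}$, which completes the argument via the ratio identity above. The main obstacle is the multivariate moderate-deviation bookkeeping: one has to check that every value of $m_1$, in particular those far from the typical $||\bk(n)||_1 X_1/||\Gamma X||_1$, contributes an exponentially small amount in $||\bk(n)||_1$, so that the bound on $A_n(\varepsilon)$ beats the merely sub-exponential lower bound on $B_n$ from Lemma \ref{lem:probgamma}.
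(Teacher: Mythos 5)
Your proposal is correct and follows essentially the same route as the paper: the paper likewise reduces the conditional probability to a ratio whose denominator is controlled by Lemma \ref{lem:probgamma}, decomposes the numerator according to the value $\ell$ of $N_1(\cT^{(1)})$ into a bulk window around $X_1\|\bk(n)\|_1/\|\Gamma X\|_1$ and its complement, and treats the complement via the cycle lemma plus a moderate-deviation bound on the linear combination $\sum_i \Gamma_{1,i}\xi_i^{(p)}$ and the bulk via the same bound on the single coordinate $\xi_i^{(p)}$. The only differences are bookkeeping (the paper sums over $\ell$ and uses only the first row of $\Gamma$, assuming without loss of generality $\sum_i\Gamma_{1,i}X_i\neq 0$, rather than summing over full vectors $\mathbf{m}$), so no substantive gap.
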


\begin{proof}
Fix $\varepsilon>0$, and consider the event 
\begin{align*}
    E := \left\{ \Gamma \ \bN(\cT^{(1)}) = \bk(n)^{\intercal}, \left|N_i(\cT^{(1)})-\frac{||\bk(n)||_1}{||\Gamma X||_1} X_i\right|>\varepsilon ||\bk(n)||_1  \right\}.
\end{align*}
Write now, for any $\ell \geq 1$, 
\begin{align*}
    E_\ell := E \cap \left\{ N_1(\cT^{(1)})=\ell \right\}.
\end{align*}
For any $\eta>0$, we also define the interval $A_{\eta}(n)$ by:
\begin{equation}
A_{\eta}(n):=\bigg((X_1-\eta)\frac{||\bk(n)||_1}{||\Gamma X||_1},(X_1+\eta)\frac{||\bk(n)||_1}{||\Gamma X||_1}\bigg).
\end{equation}
We can thus write $E=E^{(1)}+E^{(2)}$, where
\begin{align*}
    E^{(1)}=\bigcup_{\substack{\ell \geq 1\\ \ell \not\in A_{\eta}(n)}} E_\ell \text{ and } E^{(2)}=\bigcup_{\substack{\ell \geq 1\\ \ell \in A_{\eta}(n)}} E_\ell,
\end{align*}
for some $\eta>0$ to be fixed later. Observe that
\begin{equation}
\label{eq:equation sur ell}
    \ell \geq (X_1+\eta) \frac{||\bk(n)||_1}{||\Gamma X||_1} \Leftrightarrow \ell-X_1 \frac{||\bk(n)||_1}{||\Gamma X||_1} \geq \frac{\eta}{X_1+\eta} \ell.
\end{equation}
We first have:
\begin{align*}
    \Prob(E^{(1)}) &= \sum_{\substack{\ell \geq 1\\ \ell \not\in A_{\eta}(n)}} \Prob\left( \Gamma \ \bN(\cT^{(1)}) = \bk(n), \left|N_i(\cT^{(1)})-\frac{||\bk(n)||_1}{||\Gamma X||_1} X_i\right|>\varepsilon ||\bk(n)||_1, N_1(\cT^{(1)})=\ell \right)\\
    &\leq \sum_{\substack{\ell \geq 1\\ \ell \not\in A_{\eta}(n)}} \Prob\left( \Gamma \ \bN(\cT^{(1)}) = \bk(n), N_1(\cT^{(1)})=\ell \right)\\
    &\leq \sum_{\substack{\ell \geq 1\\ \ell \not\in A_{\eta}(n)}} \Prob\left( \sum_{i=1}^K \Gamma_{1,i} N_i(\cT^{(1)})=k_1(n), N_1(\cT^{(1)})=\ell \right),
\end{align*}
where, without loss of generality, we assume $\sum_{i=1}^K \Gamma_{1,i} X_i \neq 0$ (since we have assumed $\Gamma X \neq \bzero$).

Consider a sequence of i.i.d. $K$-tuples $\left((\xi_j^{(p)})_{1 \leq j \leq K}\right)_{p \geq 1}$ of random variables distributed as $(N_j^{(1)})_{1 \leq j \leq K}$ (recall the notation from \ref{ssec:asymptotic proportion of types}). Using the cycle lemma (see e.g.~\cite[Lemma 15.3]{Jan12}), we have that, for all $\ell \geq 1$:

\begin{align*}
    &\Prob\left( \sum_{i=1}^K \Gamma_{1,i} N_i(\cT^{(1)})=k_1(n), N_1(\cT^{(1)})=\ell \right)\\
    &= \frac{1}{\ell} \Prob\left(\sum_{p=1}^{\ell} \xi_1^{(p)}=\ell-1, \sum_{p=1}^\ell \sum_{i=1}^K \Gamma_{1,i}\xi_i^{(p)} = k_1(n) \right)\\
    &\leq \Prob\left( \sum_{p=1}^\ell \sum_{i=1}^K \Gamma_{1,i}\xi_i^{(p)} = k_1(n)  \right)\\
    &\leq \Prob\left( \sum_{p=1}^\ell \left( \sum_{i=1}^K (\Gamma_{1,i}\xi_i^{(p)} - \E[\Gamma_{1,i}\xi_i^{(p)}]) \right)=k_1(n)-\ell \frac{\sum_{i=1}^K \Gamma_{1,i} X_i}{X_1} \right) \text{(by Lemma \ref{lem:expectationinblob})}\\
    &\leq \Prob\left( \Big\vert\sum_{p=1}^\ell \left( \sum_{i=1}^K (\Gamma_{1,i}\xi_i^{(p)} - \E[\Gamma_{1,i}\xi_i^{(p)}]) \right)\Big\vert\geq \Big\vert k_1(n)-\ell \frac{\sum_{i=1}^K \Gamma_{1,i} X_i}{X_1}\Big\vert \right).
\end{align*}
The variables $\sum_{i=1}^K (\Gamma_{1,i}\xi_i^{(p)} - \E[\Gamma_{1,i}\xi_i^{(p)}])$ are now centered and it can be checked (see \cite[Prop. 4(i),(iii)]{Mie08}) that they have exponential moments of all order. It implies by a moderate deviation estimate (see e.g. \cite[Example 1.4]{RS15} or \cite[Lemma 3.5]{ABST25+}) that there exists $D>0$ such that, for any $\rho$ small enough, any $x>0$:
\begin{align*}
\Prob\left( \Big\vert\sum_{p=1}^\ell\sum_{i=1}^K (\Gamma_{1,i}\xi_i^{(p)} - \E[\Gamma_{1,i}\xi_i^{(p)}])\Big\vert\geq x \right) \leq \exp \left( D \ell \rho^2 - \rho x \right).
\end{align*}

Observe that $k_1(n) = \frac{||\bk(n)||_1}{||\Gamma X||_1} \sum_{i=1}^K \Gamma_{1,i} X_i +o(||\bk(n)||_1)$ by definition. Taking $\eta>0$ small enough and $\rho=\eta C^{-1}$ for $C>0$ such that $D X_1(X_1+\eta)-C |\sum_{i=1}^K \Gamma_{1,i} X_i|<0$ (which exists since we have assumed $\sum_{i=1}^K \Gamma_{1,i} X_i \neq 0$), we get that:

\begin{align*}
\Prob\left( E^{(1)}\right) &\leq\sum_{\substack{\ell \geq 1\\ \ell \not\in A_{\eta}(n)}} \Prob\left( \Big\vert\sum_{p=1}^\ell \left( \xi_i^{(p)} - \E[\xi_i^{(p)}] \right)\Big\vert\geq \Big\vert k_1(n)-\ell\frac{\sum_{i=1}^K \Gamma_{1,i} X_i}{X_1}\Big\vert \right) \\  
&\leq \sum_{\substack{\ell \geq 1\\ \ell \not\in A_{\eta}(n)}} \exp\left( D\ell\frac{\eta^2}{C^2} - \frac{\eta}{C} \frac{|\sum_{i=1}^K\Gamma_{1,i} X_i|}{X_1} \left| \ell-X_1 \frac{||\bk(n)||_1}{||\Gamma X||_1} +o(||\bk(n)||_1) \right| \right)\\
&\leq \sum_{\ell \geq (X_1+\eta)\frac{||\bk(n)||_1}{||\Gamma X||_1}} \exp\left( D\ell\frac{\eta^2}{C^2} - \frac{|\sum_{i=1}^K\Gamma_{1,i} X_i|}{X_1} \frac{\eta}{C} \left( \ell-X_1 \frac{||\bk(n)||_1}{||\Gamma X||_1} +o(||\bk(n)||_1) \right) \right)\\
&\qquad \qquad \qquad + \left( X_1-\eta \right) \frac{||\bk(n)||_1}{||\Gamma X||_1} \exp\left( D X_1 \frac{||\bk(n)||_1}{||\Gamma X||_1}\frac{\eta^2}{C^2} - \frac{\eta^2}{C} \frac{\sum_{i=1}^K|\Gamma_{1,i} X_i|}{X_1}  \frac{||\bk(n)||_1}{||\Gamma X||_1}   \right) \\
&\leq \sum_{\ell \geq (X_1+\eta)\frac{||\bk(n)||_1}{||\Gamma X||_1}} \exp\left( D\ell\frac{\eta^2}{C^2} - \frac{|\sum_{i=1}^K\Gamma_{1,i} X_i|}{X_1} \frac{\eta}{C} \frac{\eta}{X_1+\eta}\frac{||\bk(n)||_1}{||\Gamma X||_1}+o(||\bk(n)||_1) \right)\\
&\qquad \qquad \qquad + \left( X_1-\eta \right) \frac{||\bk(n)||_1}{||\Gamma X||_1} \exp\left( \eta^2 \frac{D X_1^2-C |\sum_{i=1}^K \Gamma_{1,i} X_i|}{2X_1 C^2} \frac{||\bk(n)||_1}{||\Gamma X||_1} +o(||\bk(n)||_1)\right) \\
&\leq \exp\left(-B||\bk(n)||_1 \right)
\end{align*}
for some constant $B>0$, using \eqref{eq:equation sur ell} and the definition of $C$. This holds for $\eta>0$ fixed, for all $n$ large enough.

We can now choose $\eta>0$ small enough so that $\eta \frac{X_i}{X_1||\Gamma X||_1}<\frac{\varepsilon}{2}$. Using the same technique as for $E^{(1)}$, we can write that:
\begin{align*}
    \Prob\left( E^{(2)} \right) &\leq \sum_{\substack{\ell \geq 1 \\ \ell\in A_{\eta}(n)}} \Prob\left( \Gamma \ \bN(\cT^{(1)}) = \bk(n), \left|N_i(\cT^{(1)})-\frac{||\bk(n)||_1}{||\Gamma X||_1} X_i\right|>\varepsilon ||\bk(n)||_1, N_1(\cT^{(1)})=\ell \right)\\
    &\leq \sum_{\substack{\ell \geq 1\\ \ell\in A_{\eta}(n)}} \Prob\left(  \left|N_i(\cT^{(1)})-\frac{||\bk(n)||_1}{||\Gamma X||_1} X_i\right|>\varepsilon ||\bk(n)||_1, N_1(\cT^{(1)})=\ell \right)\\
    &\leq \sum_{\substack{\ell \geq 1 \\ \ell\in A_{\eta}(n)}} \sum_{\left|m-\frac{||\bk(n)||_1}{||\Gamma X||_1} X_i\right| \geq \varepsilon ||\bk(n)||_1} \Prob \left( \left| \sum_{p=1}^\ell (\xi_i^{(p)} - \E[\xi_i^{(p)}]) \right| \geq \left| m-\ell \frac{X_i}{X_1} \right| \right)\\
    &\leq \sum_{\substack{\ell \geq 1 \\ \ell\in A_{\eta}(n)}} \sum_{\left|m-\ell\frac{X_i}{X_1}\right| \geq \frac{\varepsilon}{2} ||\bk(n)||_1} \Prob \left( \left| \sum_{p=1}^\ell (\xi_i^{(p)} - \E[\xi_i^{(p)}]) \right| \geq \left| m-\ell \frac{X_i}{X_1} \right| \right)\\
    &\leq \exp\left( - B' ||\bk(n)||_1 \right)
\end{align*}
for some $B'>0$, by \eqref{eq:grandes dev}. Indeed, the choice of $\eta$ ensures that we have, for $m$ such that $\left|m-\frac{||\bk(n)||_1}{||\Gamma X||_1} X_i\right| \geq \varepsilon ||\bk(n)||_1$: 

\begin{align*}
\left|m-\ell \frac{X_i}{X_1}\right| &\geq \left|m-\frac{||\bk(n)||_1}{||\Gamma X||_1} X_i\right| - \left|\frac{||\bk(n)||_1}{||\Gamma X||_1} X_i-\ell \frac{X_i}{X_1}\right|\\
&\geq \varepsilon ||\bk(n)||_1 - \eta  \frac{X_i}{X_1} \frac{||\bk(n)||_1}{||\Gamma X||_1} \text{ since }\ell \in A_\eta(n)\\
&\geq \frac{\varepsilon}{2} ||\bk(n)||_1 \text{by our choice of } \eta.
\end{align*}
Thus, we have
\begin{align*}
    \Prob\left( \left| N_i\left( \cT^{(1)}_{\Gamma,\bk(n)}\right)- ||\bk(n)||_1 X_i\right| > \varepsilon ||\bk(n)||_1\right) \leq \exp(-B'' ||\bk(n)||_1),
\end{align*}
for some $B''>0$. 

We conclude using Lemma \ref{lem:probgamma}.
\end{proof}

We can now prove Theorem \ref{thm:critical local limit cas facile}.

\begin{proof}[Proof of Theorem \ref{thm:critical local limit}]
For all $n \geq 1$, let $I_{n,\varepsilon} := \{\bn := (n_1, \ldots, n_k)^{\intercal} \ | \ \Gamma \ \bn=\bk(n) \text{ and}, \ \forall i \in [K], |n_i-X_i||\bn||_1| \leq \varepsilon ||\bn||_1\}$. By Lemma \ref{lem:proche de la direction asymptotique}, there exists $\varepsilon_n \rightarrow 0$ such that $\Prob\left( \bN(\cT_{\Gamma,\bk(n)}^{(1)}) \in I_{n,\varepsilon_n} \right) \rightarrow 1$. We conclude by \cite[Theorem 3.1]{ADG18}.
\end{proof}

\section{Appendix: a counter-example when $\bmu$ is not entire}
\label{sec:appendix}

We construct here an example of a supercritical probability measure that does not admit a critical equivalent. No such example exists in the monotype case, showing that the multitype case is more intricate. 

The idea is as follows: start from an entire supercritical distribution, and add a small well-chosen non-entire perturbation. It turns out that one can do it without changing too much the functions $\phi^{(i)}$ within the domain of convergence. By doing this, we can make some critical tiltings "leave" the domain of convergence. We consider the following function, defined on $(-\infty,0)$:

\begin{equation*}
    g(\theta):=\sum_{n\geq 3} \frac{2}{n(n-1)(n-2)}e^{n\theta}
\end{equation*}

It has the following properties.

\begin{Lemma}
\label{lem:fonction g}
We have, for any $\theta < 0$:
\[
g(\theta)=-\left(e^{\theta}-1\right)^2\log\left(1-e^{\theta}\right)+e^{\theta}\left(\frac{3}{2}e^{\theta}-1\right).
\]
In addition, for all $\theta\leq 0$, we have $0\leq g(\theta),g^\prime(\theta) \leq 2$ and
\begin{align*}
    g(\theta),g'(\theta) \underset{\theta \rightarrow-\infty}{\rightarrow} 0.
\end{align*}
\end{Lemma}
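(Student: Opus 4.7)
The plan is to first obtain the closed form by a partial fractions decomposition, then read off positivity, the upper bounds, and the decay at $-\infty$ directly from the series representation. I would begin by verifying (clearing denominators) the identity
\[
\frac{2}{n(n-1)(n-2)} = \frac{1}{n-2} - \frac{2}{n-1} + \frac{1}{n},
\]
so that $g(\theta) = S_1 - 2S_2 + S_3$, where $S_k := \sum_{n\geq 3} e^{n\theta}/(n-k+1)$ for $k\in\{1,2,3\}$. After an appropriate index shift, each $S_k$ reduces to a tail of the classical expansion $-\log(1-x) = \sum_{m\geq 1} x^m/m$ evaluated at $x = e^\theta\in(0,1)$, minus finitely many explicit low-order terms. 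Concretely, $S_1 = -e^{2\theta}\log(1-e^\theta)$, $S_2 = e^\theta\bigl[-\log(1-e^\theta)-e^\theta\bigr]$, and $S_3 = -\log(1-e^\theta)-e^\theta-\tfrac{1}{2}e^{2\theta}$; collecting the three and using $(e^\theta-1)^2 = e^{2\theta}-2e^\theta+1$ to factor the logarithmic contribution yields exactly the announced formula.

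For the nonnegativity and the upper bounds, I would argue directly from the series. All coefficients are positive, which immediately gives $g(\theta)\geq 0$ and $g'(\theta) = \sum_{n\geq 3}\frac{2}{(n-1)(n-2)}e^{n\theta}\geq 0$. For $\theta\leq 0$, bounding $e^{n\theta}\leq 1$ dominates each series by its value at $\theta = 0$, which I would compute by telescoping: $\frac{2}{n(n-1)(n-2)} = \frac{1}{(n-1)(n-2)} - \frac{1}{n(n-1)}$ gives $\sum_{n\geq 3}\frac{2}{n(n-1)(n-2)} = \frac{1}{2}$, and $\frac{1}{(n-1)(n-2)} = \frac{1}{n-2}-\frac{1}{n-1}$ gives $\sum_{n\geq 3}\frac{2}{(n-1)(n-2)} = 2$. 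Hence $g(\theta)\leq \tfrac{1}{2}\leq 2$ and $g'(\theta)\leq 2$.

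For the limits at $-\infty$, I would factor out $e^{3\theta}$ (the smallest exponential appearing in either series). Since $e^{(n-3)\theta}\leq 1$ for $\theta\leq 0$ and $n\geq 3$, this yields
\[
g(\theta)\;\leq\;\frac{e^{3\theta}}{2},\qquad g'(\theta)\;\leq\; 2\,e^{3\theta},
\]
both of which tend to $0$ as $\theta\to-\infty$. There is no substantive obstacle here: the whole argument is routine manipulation of power series, and the only place requiring care is the bookkeeping of the index shifts when rewriting each $S_k$ in terms of $-\log(1-e^\theta)$ and its missing initial terms.
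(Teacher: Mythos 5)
Your proposal is correct and follows essentially the same route as the paper: the partial-fraction identity $\frac{2}{n(n-1)(n-2)}=\frac{1}{n}-\frac{2}{n-1}+\frac{1}{n-2}$ combined with the series $-\log(1-x)=\sum_{m\geq 1}x^m/m$ at $x=e^\theta$, with the bounds and limits read off from the positive-coefficient series (your labels $S_1$ and $S_3$ are swapped relative to your own definition of $S_k$, but the combination $S_1-2S_2+S_3$ is symmetric, so nothing is affected). The paper is terser, additionally recording the closed form $g'(\theta)=-2e^{\theta}(e^{\theta}-1)\log(1-e^{\theta})+2e^{2\theta}$, but your telescoping evaluation of $g(0)=\tfrac12$ and $g'(0)=2$ and the factoring of $e^{3\theta}$ for the decay are a perfectly valid, slightly more explicit way to get the stated bounds and limits.
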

\begin{proof}
It suffices to observe that, for all $n \geq 3$, $$\frac{2}{n(n-1)(n-2)}=\frac{1}{n}-\frac{2}{n-1}+\frac{1}{n-2},$$
and check that
\[
g'(\theta)=-2e^{\theta}(e^{\theta}-1)\log(1-e^{\theta})+2e^{2\theta}.
\]
\end{proof}

We now consider, for any $\varepsilon,A>0$, the $2$-type tree whose two offspring distributions have generating functions:
\[
\begin{cases}
\phi^{(1)}(e^{\btheta}) =&\frac{1}{2+2e^{-2A}+\varepsilon g(-1)} \left(1 + 2e^{2(\theta_1-A)} + e^{2\theta_2} +\varepsilon g(\theta_1-1)\right)\\
\phi^{(2)}(e^{\btheta}) =& \frac{1}{2+e^{-2A}}\left(1 + e^{2(\theta_1-A)} + e^{2\theta_2}\right)\\
\end{cases}
\]

Denote by $\bmu^{A,\varepsilon}$ the associated projection of offspring distributions, and by $\chi^{A,\varepsilon}$ the function associated to $\bmu^{A,\varepsilon}$ by Definition \ref{def:functionc}.

\begin{Proposition}
\label{prop:counterexample}
    For a good choice of $\btheta, A, \varepsilon$ and for $\Gamma := (1\ 1)$, $\bmu_{\btheta}^{A,\varepsilon}$ is supercritical, but has no critical $\Gamma$-equivalent tilting.
\end{Proposition}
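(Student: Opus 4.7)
The idea is to construct the counterexample as a controlled non-entire perturbation of the entire base measure $\bmu^{A,0}$ (obtained by setting $\varepsilon=0$). For $\varepsilon=0$, the measure is a Laurent polynomial in $e^{\theta_1},e^{\theta_2}$, hence entire, and all the hypotheses of Corollary~\ref{cor:coefsinz} are satisfied, so $\bmu^{A,0}$ has a unique critical $\Gamma$-equivalent tilting (uniqueness comes from $\Gamma=(1\ 1)\in\cM^*_{1,2}(\Z_+)$ and Lemma~\ref{lem:injective2}(iii)). Turning on $\varepsilon>0$ restricts the domain of $\phi^{(1)}$ to $\{\theta_1<1\}$, because the series defining $g(\theta_1-1)$ has radius of convergence $e$. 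The plan is to choose $A,\varepsilon$ and a base point $\btheta$ so that (a) $\bmu^{A,\varepsilon}_\btheta$ is supercritical, and (b) the $\Gamma$-equivalence curve $C=\{\btheta'\in\{\theta_1'<1\}:\chi_1^{A,\varepsilon}(\btheta')=\chi_2^{A,\varepsilon}(\btheta')\}$ does not meet $\cM_{\text{crit}}^{A,\varepsilon}$.

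The first step is routine: for every $A>0$ and $\varepsilon\in(0,1)$ the projection $\bmu^{A,\varepsilon}$ is finite, nondegenerate, nonlocalized and irreducible; moreover Lemma~\ref{lem:fonction g} shows that $g$ and $g'$ are bounded on $(-\infty,0]$ and vanish at $-\infty$, so the mean matrix $M^{A,\varepsilon}_\btheta$ is jointly continuous in $(\varepsilon,\btheta)$ on $\{\theta_1\le 1\}$ (only $g''$ blows up at $0^-$, which is what breaks entirety but does not affect the first moments). Under $\Gamma=(1\ 1)$, the condition $\btheta'\sim_\Gamma\btheta$ reduces to $(\chi_1-\chi_2)(\btheta')=(\chi_1-\chi_2)(\btheta)$. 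Supercriticality at $\btheta=\bzero$ is established by a direct perturbative computation: writing $M^{A,\varepsilon}_\bzero$ as the $A\to\infty$ limit matrix $M_\infty=\bigl(\begin{smallmatrix}\alpha&\beta\\0&1\end{smallmatrix}\bigr)$ (with $\alpha=\varepsilon g'(-1)/(2+\varepsilon g(-1))<1$ for $\varepsilon$ small) plus an $O(e^{-2A})$ correction whose $(2,1)$-entry is $\sim e^{-2A}$, first-order perturbation theory around the simple eigenvalue $1$ of $M_\infty$ gives $\rho(M^{A,\varepsilon}_\bzero)=1+e^{-2A}\bigl(\tfrac{\beta}{1-\alpha}-\tfrac12\bigr)+o(e^{-2A})$, which is $>1$ for $A$ large and $\varepsilon\in[0,1)$.

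The heart of the argument is showing no critical tilting lies on the curve $C$ inside $\{\theta_1<1\}$. The plan is a compactness-plus-uniqueness argument: assume for contradiction a family $(\btheta_\varepsilon)$ of such critical $\Gamma$-equivalent tiltings. Using the uniform bound $|g|,|g'|\le 2$ from Lemma~\ref{lem:fonction g}, the proof of Lemma~\ref{lem:explicitboundonc} transfers to the perturbed setting and yields a uniform-in-$\varepsilon$ upper bound on $\|\btheta_\varepsilon\|_2$ in terms of $X_{\btheta^*}^\intercal\chi^{A,\varepsilon}(\btheta_\varepsilon)$, where $\btheta^*=\btheta^*(A)$ is the unique critical $\Gamma$-equivalent tilting of $\bmu^{A,0}$. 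Extracting a subsequential limit $\bar\btheta\in\{\theta_1\le 1\}$ and using joint continuity of $M^{A,\varepsilon}_\btheta$ up to the boundary, $\bar\btheta$ is critical for $\bmu^{A,0}$ and $\Gamma$-equivalent to $\bzero$, hence equals $\btheta^*(A)$. One must then show that a suitable choice of $(A,\varepsilon)$ makes this incompatible with $\bar\theta_1\le 1$: analysing the criticality condition $2u^2+u(1+v)+v^2=1$ (with $u=e^{2(\theta_1-A)},v=e^{2\theta_2}$) jointly with the $\Gamma$-equivalence $\theta_1-\theta_2=\log\bigl(\tfrac{2+e^{-2A}}{2+2e^{-2A}}\cdot\tfrac{1+2u+v}{1+u+v}\bigr)$ pins down $\btheta^*(A)$; the remaining freedom in $\varepsilon$ is used to move $\btheta^*(A,\varepsilon)$ out of $\{\theta_1\le 1\}$ as $\varepsilon$ varies in a suitable interval.

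The main obstacle is the last quantitative step: verifying the explicit separation $\theta_1^*(A,\varepsilon)>1$ (or equivalently, that the perturbation shifts the unique intersection of $\cM_{\text{crit}}^{A,0}$ with $C$ across the boundary $\theta_1=1$). Since $g''$ blows up at $0^-$ while $g,g'$ stay bounded, the variance of $\mu^{(1)}_\btheta$ can be driven to infinity as $\theta_1\to1^-$ without affecting the mean matrix. Exploiting this asymmetry -- together with the precise coefficients $2,e^{-2A}$ in the two generating functions $\phi^{(1)},\phi^{(2)}$ (chosen so that the "entropic" force along $C$ pushes the critical curve towards the boundary as $\varepsilon$ grows) -- is what forces the desired dichotomy; the rest of the argument is a direct application of the machinery developed in Sections~\ref{sec:properties of Mcrit}--\ref{sec:propertiesofthefunctionc}.
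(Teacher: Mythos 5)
Your framing (perturb an entire supercritical measure, use the boundedness of $g,g'$ and the constraint $\theta_1<1$) matches the paper's intent, but the core of your argument has a genuine gap, and the specific mechanism you propose cannot work. Your construction fixes the base point at $\bzero$: your curve $C=\{\chi_1^{A,\varepsilon}=\chi_2^{A,\varepsilon}\}$ is precisely the $\Gamma$-equivalence class of $\bzero$, and everything hinges on the unproven final step that the critical tilting equivalent to $\bzero$ can be pushed out of $\{\theta_1\le 1\}$ by a suitable choice of $(A,\varepsilon)$. In the regime where your other estimates are valid ($A$ large, $\varepsilon$ small) this is false: by your own perturbation computation the measure at $\bzero$ is only barely supercritical (spectral radius $1+O(e^{-2A})$), the critical set given by \eqref{eq:critcounter2} is a curve parametrized by $\theta_1\in(-\infty,1)$ along which $\theta_2$ stays bounded and close to $0$, and an intermediate-value argument in $\theta_1$ shows that this curve meets $\{\chi_1=\chi_2\}$ at a point with $\theta_1$ close to $0$, i.e. well inside the convergence domain. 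In other words, a barely supercritical measure has its critical equivalent tilting near the origin, and the non-entire perturbation only deletes the region $\theta_1\ge 1$, which is irrelevant there; since $\varepsilon\to 0$ reverts to the entire case, continuity keeps this critical equivalent tilting inside $\{\theta_1<1\}$ for all small $\varepsilon$, and for large $\varepsilon$ you offer no argument at all. So with base point $\bzero$ the desired non-existence simply does not hold.

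The paper's proof sidesteps this by taking the base point $\btheta=(-s,s)$ with $s$ large, not $\bzero$. Supercriticality is then immediate (a type-$2$ vertex has two type-$2$ children with probability tending to $1$ as $s\to\infty$), and the key computation is $\chi^{A,\varepsilon}(-s,s)=(3s,s)+O_s(1)$, while Lemma \ref{lem:counterexample} together with the radius-of-convergence constraint forces every critical $\btheta'$ to satisfy $\theta_1'\le 1$ and $||\chi^{A,\varepsilon}(\btheta')-(-\theta_1',0)||_1\le C$. Writing $\chi^{A,\varepsilon}(-s,s)=\chi^{A,\varepsilon}(\btheta')+\lambda\,\Gamma^{\intercal}$ and comparing the two coordinates then forces $\theta_1'$ to grow linearly in $s$, a contradiction. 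Note also that the paper carries this out with $\Gamma=(6\ 1)$ rather than the $(1\ 1)$ of the statement: with $(1\ 1)$ the same bookkeeping only yields $\theta_1'\approx-2s$, which is not contradictory, so the choice of $\Gamma$ and, above all, of the strongly tilted base point $(-s,s)$ is exactly where the real work lies --- and it is the part missing from your proposal.
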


In order to prove this, we start with a first lemma.

\begin{Lemma}
\label{lem:counterexample}
For any $c_1,C_2>0$, there exist $A_0,\varepsilon_0>0$ such that, for any $A>A_0$, for any $\varepsilon \in (0,\varepsilon_0)$, for any $\btheta$ for which $\bmu^{A,\varepsilon}_{\btheta}$ is critical, we have $\theta_1-A<-C_2$ and $|\theta_2|<c_1$.
\end{Lemma}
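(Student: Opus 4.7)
\bigskip
\noindent\emph{Proof plan for Lemma \ref{lem:counterexample}.}

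The first bound on $\theta_1-A$ is essentially a matter of the radius of convergence of $\phi^{(1)}$. The series defining $g(\theta_1-1)$ converges precisely when $e^{\theta_1-1}\le 1$, i.e.\ $\theta_1\le 1$, and diverges otherwise; consequently $\E_{\mu^{(1)}}[e^{\btheta\cdot\mathbf k}]=\phi^{(1)}(e^\btheta)$ is finite (so that the tilting $\bmu^{A,\varepsilon}_{\btheta}$ is well-defined at all, let alone critical) only if $\theta_1\le 1$. Choosing $A_0:=1+C_2$ then immediately gives $\theta_1-A\le 1-A<-C_2$ for any $A>A_0$, which handles the first assertion.

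For the second assertion, my plan is to rewrite the criticality condition as an explicit quadratic equation in $v:=e^{2\theta_2}$, with coefficients depending on the small parameters $u:=e^{2(\theta_1-A)}$ and $\varepsilon$, and on $\theta_1$ through the bounded quantities $g(\theta_1-1)$ and $g'(\theta_1-1)$. A direct computation of $M_{\btheta}$ from the two generating functions yields
\begin{align*}
M_{\btheta}(1,1)&=\tfrac{4u+\varepsilon g'}{1+2u+v+\varepsilon g},&
M_{\btheta}(1,2)&=\tfrac{2v}{1+2u+v+\varepsilon g},\\
M_{\btheta}(2,1)&=\tfrac{2u}{1+u+v},&
M_{\btheta}(2,2)&=\tfrac{2v}{1+u+v},
\end{align*}
where $g=g(\theta_1-1)$ and $g'=g'(\theta_1-1)$. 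By Lemma \ref{lem:nul}, criticality is equivalent (locally) to $\det(I_2-M_{\btheta})=0$. Clearing denominators and simplifying, this condition takes the compact form
\begin{equation*}
1-u-2u^2-uv-v^2+\varepsilon(g-g')(1+u-v)=0,
\end{equation*}
which, viewed as a quadratic in $v$, reads
\begin{equation*}
v^2+\bigl(u+\varepsilon(g-g')\bigr)v+\bigl(u+2u^2-1-\varepsilon(g-g')(1+u)\bigr)=0.
\end{equation*}

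I then use Lemma \ref{lem:fonction g}, which ensures that $|g|,|g'|\le 2$ uniformly on $(-\infty,0)$, so the coefficients of this quadratic depend on $\theta_1$ only through quantities bounded independently of $\theta_1$. For $A$ large we have $u\le e^{2(1-A)}$, so the coefficients of the quadratic lie in arbitrarily small neighbourhoods of $(1,0,-1)$ as $A\to\infty$ and $\varepsilon\to 0$. The two roots are therefore arbitrarily close to $\pm 1$ (uniformly in $\theta_1\le 1$), and since $v=e^{2\theta_2}>0$ only the root near $+1$ can arise. Hence, given $c_1>0$, I can select $A_0\ge 1+C_2$ and $\varepsilon_0>0$ small enough to force this root to lie in $(e^{-2c_1},e^{2c_1})$, giving $|\theta_2|<c_1$.

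The only step requiring some care is ensuring that the perturbative analysis of the quadratic is uniform in $\theta_1\in(-\infty,1]$; this is exactly what the uniform bound $|g|,|g'|\le 2$ supplied by Lemma \ref{lem:fonction g} is for, so no genuine obstacle arises. Everything else is a computation and a continuity argument.
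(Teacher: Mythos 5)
Your proof follows essentially the same route as the paper: bound $\theta_1\le 1$ via the radius of convergence of $g$ (so $A_0=1+C_2$ settles the first claim), reduce criticality to $\det(M_{\btheta}-I_2)=0$, and arrive at the identical quadratic $v^2+(u+\varepsilon\delta)v+(u+2u^2-1-\varepsilon\delta(1+u))=0$ in $v=e^{2\theta_2}$ with $\delta=g-g'$, concluded by a perturbation argument that is uniform in $\theta_1$ thanks to the bounds on $g,g'$ from Lemma \ref{lem:fonction g} — exactly the paper's computation and conclusion. The only blemish is the appeal to Lemma \ref{lem:nul}, whose hypothesis that $\bmu$ be entire fails for $\bmu^{A,\varepsilon}$; but you only need the one direction that criticality forces $\det(M_{\btheta}-I_2)=0$, which is immediate from Perron--Frobenius (the spectral radius of a nonnegative matrix is an eigenvalue), as in the paper.
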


\begin{proof}
Fix $A, \varepsilon>0$. For any $\btheta \in \R^2$ critical, $1$ is an eigenvalue of $M_\btheta$ and we have:
\[
\left(\frac{4e^{2(\theta_1-A)}+\varepsilon g^\prime(\theta_1-1)}{\gamma^{(1)}(\btheta)}-1\right)\left(\frac{2e^{2\theta_2}}{\gamma^{(2)}(\btheta)}-1\right)-\frac{2e^{2\theta_2}}{\gamma^{(1)}(\btheta)}\frac{2e^{2(\theta_1-A)}}{\gamma^{(2)}(\btheta)}=0,
\]
where
\[
\begin{aligned}
\gamma^{(1)}(\btheta) :=&1 + 2e^{2(\theta_1-A)} + e^{2\theta_2} +\varepsilon g(\theta_1-1),\\
\gamma^{(2)}(\btheta) :=& 1 + e^{2(\theta_1-A)} + e^{2\theta_2}.
\end{aligned}
`\]
This can be rewritten as:
\[
\left(4e^{2(\theta_1-A)}+\varepsilon g^\prime(\theta_1-1)-\gamma^{(1)}(\btheta)\right)\left(2e^{2\theta_2}-\gamma^{(2)}(\btheta)\right)=4e^{2(\theta_1-A)+2\theta_2}.
\]
Using the notation $X=e^{2(\theta_1-A)},Y=e^{2\theta_2}$ and $\delta=g(\theta_1-1)-g^\prime(\theta_1-1)$ we get:
\[
\left(2X-1-\varepsilon\delta-Y\right)\left(Y-1-X\right)=4XY.
\]
This can be rewritten as:
\begin{equation}\label{eq:critcounter1}
Y^2+(X+\varepsilon \delta)Y-(1+X)(1-2X+\varepsilon \delta)=0.    
\end{equation}
This is equivalent to:
\begin{equation}\label{eq:critcounter2}
\bigg(Y+\frac{X+\varepsilon \delta}{2}\bigg)^2=\frac{(X+\varepsilon \delta)^2}{4}+(1+X)(1-2X+\varepsilon \delta)
\end{equation}

Observe that the radius of convergence of $g \circ \log$ is $1$. Hence, if $\btheta := (\theta_1, \theta_2)$ is critical, we have $\theta_1<1$ and in particular, if $A$ is large enough so that $A\geq C_2+1$, we have $\theta_1-A\leq -C_2$. Using the fact that $g,g'$ are bounded on $\R_-$ (Lemma \ref{lem:fonction g}), we get that $\delta$ is bounded. We have that $X\leq e^{-2(A-1)}$. For $X,\varepsilon>0$ small enough, \eqref{eq:critcounter2} has one positive solution $Y(X,\varepsilon)$ and, as $X,\varepsilon \rightarrow 0$, $Y(X,\varepsilon) \rightarrow 1$. This means that by taking $A$ large enough and $\varepsilon$ small enough we can ensure that any positive $Y$ that satisfies \eqref{eq:critcounter2} also satisfies $|\log(Y)|<c_1$. The result follows.
\end{proof}

Let us show how it implies Proposition \ref{prop:counterexample}.

\begin{proof}[Proof of Proposition \ref{prop:counterexample}]
Observe that, for any choice of $A,\varepsilon$, for any $s$ large enough, $\phi^{(1)},\phi^{(2)}$ are well-defined at $(-s,s)$. The constants $A$ and $\varepsilon$ being fixed, as $s$ goes to $+\infty$, we can compute the following asymptotics:
\[
\chi^{A,\varepsilon}(-s,s)=(3s,s) + O_s(1).
\]

Using this along with Lemma \ref{lem:counterexample}, for any $A>0$ large enough, any $\varepsilon>0$ small enough, there exists a constant $C:=C(A,\varepsilon)>0$ such that we have:
\begin{itemize}
\item[(a)] $||\chi^{A,\varepsilon}(-s,s)-(3s,s)||_1\leq C$,
\item[(b)] if $\bmu_\btheta^{A,\varepsilon}$ is critical then $\theta_1-A \leq 0$ and $||\chi^{A,\varepsilon}(\btheta)-(-\theta_1,0)||_1\leq C$.
\end{itemize}

Now fix $A$ large enough, $\varepsilon>0$ small enough and $C>0$ such that (a) and (b) hold, and take $\Gamma=(6\ 1)$. We will prove that, for $s>0$ large enough, $\bmu_{(-s,s)}^{A,\varepsilon}$ has no critical $\Gamma$-equivalent tilting. To this end, fix $s>0$ and assume that there exists $\btheta$ critical and $\lambda\in\R$ such that $\chi^{A,\varepsilon}(-s,s)=\chi^{A,\varepsilon}(\btheta)+\lambda \Gamma$.
It follows from (a) and (b) that $|\lambda-s/2-\theta_1/6|\leq C/3$ (checking the first coordinate), and that $|\lambda-s|\leq 2C$ (checking the second coordinate). From this we get that $|s/2-\theta_1/6|\leq C/3$. Since we necessarily have $\theta_1\leq 1$ (by definition of $g$), we obtain that, for $s$ large enough, there is no solution. 
To conclude, observe that, for $s$ large enough, the measure is supercritical. Indeed, by definition, a node of type $2$ has two children of type $2$ with probability arbitrarily close to $1$ as $s \rightarrow +\infty$.
\end{proof}

\section{Appendix: number of pre-images for $\chi$}
\label{sec:appenonze}
We know by Proposition \ref{prop:bijection} that $\chi$ induces a bijection between $\cM_{crit}$ and the boundary of $\mathcal{C}_{image}$. 
We also know by Lemma \ref{lem:fromsupercrittosubcrit} that an element in the interior of $\mathcal{C}_{image}$ may have multiple pre-images by $\chi$. It turns out that there is actually no upper bound to the number of pre-images of an element by $\chi$. \\
To show this we consider the following example for some $N \geq 3$:
\begin{itemize}
    \item There are $K:=2N$ types,
    \item a vertex of type $i \in [2N]$ has no child with probability $1/4$, 2 children of type $i$ with probability $1/2$ or 1 child of each type (so, $2N$ children in total) with probability $1/4$.
\end{itemize}
In this setting we have for all $\btheta \in \R^{2N}$ and for all $i \in [2N]$:
\begin{equation}
    \phi^{(i)}(e^\btheta)= \frac{1+e^{\sum_k \theta_k}+2e^{2\theta_i}}{4}.
\end{equation}
The reason we look at this case is that the Jacobian of $\chi$ at $\btheta=\bzero$ is equal to the matrix with $1/4$ on every entry. This means that it should be far from locally invertible and we can hope that some values have many pre-images by $\chi$.\\
To simplify notations, write $\gamma:=e^{\sum_k \btheta_k}$. We want to find values of $\btheta \in \R^K$ for which $\chi(\btheta)=\bzero$.

To this end, let $\delta$ be a solution in $(0,1)$ of
\begin{align*}
    \delta = \left( \frac{1+\delta}{2} \right)^N.
\end{align*}

Let $I$ be a set of $N$ elements of $[2N]$, and let $\btheta := (\theta_1, \ldots, \theta_{2N})$ be such that
$$
e^{\theta_i} = \left\{
    \begin{array}{ll}
       1+\sqrt{\frac{1-\delta}{2}} & \mbox{if } i \in I \\
        1-\sqrt{\frac{1-\delta}{2}} & \mbox{otherwise.}
    \end{array}
\right.
$$

In this case, we get

\begin{align*}
    e^{\sum_k \theta_k}
    &= \left( 1+\sqrt{\frac{1-\delta}{2}} \right)^N \left( 1-\sqrt{\frac{1-\delta}{2}} \right)^N\\
    &= \left( \frac{1+\delta}{2} \right)^N\\
    &= \delta
\end{align*}

In addition, for all $i \in [2N]$, we have
\begin{align*}
  \frac{1}{2}  e^{2\theta_i} - e^{\theta_i} + \frac{1+\delta}{4}=0.
\end{align*}

This can be rewritten as:

\begin{align*}
  \forall i \in [2N], \frac{1+e^{\sum_k \theta_k}+2e^{2\theta_i}}{4}=e^{\theta_i}.
\end{align*}

We get from that:

\begin{align*}
\forall i \in [2N], \ e^{\chi_i(\btheta)} 
&= \frac{1+e^{\sum_k \theta_k}+2e^{2\theta_i}}{4}e^{-\theta_i}\\
&= \frac{1+\delta+2e^{2\theta_i}}{4}e^{-\theta_i}\\
&= 1,
\end{align*}
so that $\chi(\btheta)=\bzero$. Since there are $\binom{2N}{N}$ choices for the set $I$, we get that $\bzero$ has at least $\binom{2N}{N}$ pre-images by $\chi$.

\bibliographystyle{abbrv}
\bibliography{main}
\end{document}